\numberwithin{equation}{section}
\def\vv#1{\text{\rm\bfseries #1}}
\def\tr{\text{\rm tr}}
\def\div{\text{\rm div}}
\def\jump#1{\llbracket {#1} \rrbracket}
\numberwithin{equation}{section}
\newtheorem{theorem}{Theorem}[section]
\newtheorem{lemma}{Lemma}[section]
\theoremstyle{definition}
\newtheorem{assumption}{Assumption}
\newtheorem{corollary}{Corollary}[section]
\begin{document}

\title[Unfitted Method for Stokes]{New stability estimates for an unfitted  finite element method for two-phase Stokes problem}

\author[E. C\'aceres]{Ernesto C\'aceres\textsuperscript{\textdagger}}
\address{\textsuperscript{\textdagger} Division of Applied Mathematics, Brown University, Providence, RI 02912, USA}
\email{ernesto\_caceres\_valenzuela@brown.edu}

\author[J. Guzm\'an]{Johnny Guzm\'an\textsuperscript{\textdagger}}
\address{\textsuperscript{\textdagger} Division of Applied Mathematics, Brown University, Providence, RI 02912, USA}
\email{johnny\_guzman@brown.edu}
\thanks{Partially supported by NSF through the Division of Mathematical Sciences grant  1620100 }

\author[M.A. Olshanskii]{Maxim Olshanskii\textsuperscript{\textdaggerdbl}}
\address{\textsuperscript{\textdaggerdbl}Department of Mathematics, University of Houston, Houston, TX 77204, USA}
\email{molshan@math.uh.edu}
\email{}
\thanks{Partially supported by NSF through the Division of Mathematical Sciences grant 1717516.}

\maketitle

\date{}

\begin{abstract}
	The paper addresses stability and finite element analysis of the stationary two-phase Stokes problem with a piecewise constant viscosity coefficient experiencing a jump across the interface between two fluid phases. We first prove \textit{a priori} estimates for the individual terms of the Cauchy stress tensor with stability constants independent of  the  viscosity coefficient. Next, this stability result is extended to the approximation of the  two-phase Stokes problem by a finite element method. In the  method considered, the interface between the phases does not respect the underlying triangulation, which put the finite element method into the class of unfitted discretizations. The finite element error estimates are proved with constants independent of viscosity.
	Numerical experiments supporting the theoretical results are provided.
\end{abstract}

\maketitle
\noindent
%{\bf Key words:}

%================================================================================
%																	 INTRODUCTION
%================================================================================
\section{Introduction} We are interested in the analysis and a finite element method for the two-phase Stokes problem (also known in the literature as \emph{the Stokes  interface problem}). The system of equations is posed in a bounded  Lipschitz domain $\Omega\subset \mathbb R^d$, $d=2,3$, decomposed in two subdomains (phases) $\Omega^\pm$. The interface $\Gamma$ between two phases is a closed hypersurface immersed in $\Omega$, i.e., $\Gamma\subset\Omega$ and  $\Gamma=\overline{\Omega^+}\cap\overline{\Omega^-}$. We assume  $\Gamma$ is Lipschitz  smooth.
The Stokes interface problem reads as follows:  Given a force field $\vv f\in L^2(\Omega)^d$, a source term $g\in L^2(\Omega)$, an interface force  {$\bm{\lambda}\in L^2(\Gamma)^d$}, and viscosity coefficient  $\nu^\pm$
constant and positive in each subdomain,
find the fluid velocity $\vv{u}$ and the normalized kinematic pressure $p$ such that
\begin{equation}\label{eq:contform}
\begin{aligned}
-\div\big(\nu^\pm D(\vv u)\big)+\nabla p^\pm & =\vv f^\pm &~\text{in}&\quad \Omega^\pm,\\
\div\,\vv u & = g& ~\text{in}&\quad\Omega,\\
\jump{\vv u} & = \vv 0&~\text{on}&\quad\Gamma,\\
\jump{\sigma(\vv u,p)\,\vv n} & =  {\bm{\lambda}}& ~\text{on}&\quad\Gamma,\\
\vv u & = \vv 0&~\text{on}&~ \partial\Omega,
\end{aligned}
\end{equation}
where $D(\vv u):=\frac12(\nabla\vv u+(\nabla \vv u)^T)$ is the rate-of-strain tensor,  $\sigma(\vv u,p)=\nu\,D(\vv u)-p\,\mathbb{I}$ is the Cauchy stress tensor, and $\vv n$ is a unit vector on $\Gamma$ pointing from $\Omega^+$ to $\Omega^-$.
For any $f \in L^1(\Omega)$ we use notations  $f^\pm$ for the restriction of $f$ on $\Omega^\pm$, i.e., $f^\pm=f|_{\Omega^\pm}$; same convention is used for vector functions. The jumps on the interface are then defined as
$
\jump{\sigma(\vv u,p)\vv n}=\sigma(\vv u^+, p^+) \vv n - \sigma(\vv u^-, p^-)\vv n$  and $\jump{\vv u}=\vv u^+-\vv u^-$.

The studies of the Stokes interface problem are motivated by continuum  models of two-phase flows. If the fluid is treated as Newtonian incompressible with immiscible phases separated by the sharp interface, then the system~\eqref{eq:contform} is a reasonable model problem for the limit case of highly viscous fluid; see, e.g.,~\cite{Truesdell1960,drew1983mathematical,sussman1994level,sussman2007sharp}. It also appears as an auxiliary problem in numerical simulations of two-phase incompressible flows~\cite{gross2011numerical}.
According to the continuum surface force model, cf. \cite{brackbill1992continuum}, the effect of interfacial forces, such as the surface tension, are taken into account by using a  localized force term at the interface, i.e.,  {$\bm{\lambda}$} in \eqref{eq:contform}.

Problem \eqref{eq:contform} is linear and a standard weak formulation \eqref{eq:mixedform} renders it as a saddle-point problem, thus  yielding the well-posedness result  and leading to Galerkin numerical methods; see, e.g.,~\cite{girault2012finite,brezzi2012mixed}. This textbook analysis, however, does not provide an explicit information on the dependence of the stability and numerical errors estimates on the viscosity coefficient, in particular, on the ratio $\nu^+/\nu^-$, provided $\nu^-\le \nu^+$. This robustness question becomes important  if one addresses  numerical stability of Galerkin  methods, such as the finite element method, for the case of high variation in viscosity coefficient between two phases. The $\nu$-dependence of stability and finite element error estimates for \eqref{eq:contform} have been studied in the literature only recently; see \cite{or-2006,ohmori2009some,hansbo2014cut,kirchhart2016analysis}.
In those studies, stability and error analysis was done for the natural energy norm  of the problem. In particular, under certain further assumptions on $\Omega^\pm$,  the \textit{a priori} estimate from~\cite{or-2006} (proved there for $g=0$,  {$\bm{\lambda}=0$}) reads
\begin{equation}\label{EstEng}
\|\nu^{\frac12}D(\vv u)\|_{L^2(\Omega)}+\|\nu^{-\frac12} p\|_{L^2(\Omega)}\le C \|\nu^{-\frac12}\vv f\|_{L^2(\Omega)},
\end{equation}
with $C$ independent of $\nu$. Note that for \emph{single} phase Stokes problem, a simple scaling argument provides uniform estimates
for the quantities $\nu \,\vv u$ and $p$. Similar result does not follow from \eqref{EstEng} for the velocity and pressure in each of the phases. For example, for $\nu^+\vv u^+$ and $p^+$ the estimate \eqref{EstEng} yields
\begin{equation}\label{EstEng1}
\|\nu^+ D(\vv u^+)\|_{L^2(\Omega^+)}+\|p^+\|_{L^2(\Omega^+)}\le C (\|\vv f\|_{L^2(\Omega^+)}+\sqrt{\frac{\nu^+}{\nu^-}}\|\vv f\|_{L^2(\Omega^-)} ).
\end{equation}
We see that the right-hand side blows up for $\nu^- \to 0$. In the present paper, we prove the following stability  result for the solution of \eqref{eq:contform}:
\begin{equation}\label{EstNew}
\|\nu D(\vv u)\|_{L^2(\Omega)}+\|p\|_{L^2(\Omega)}\le C (\|\vv f\|_{L^2(\Omega)}+\| {\bm{\lambda}}\|_{L^2(\Gamma)}+\|\nu\,g\|_{L^2(\Omega)}),
\end{equation}
The improvement over \eqref{EstEng1} is clear: the re-scaled solution components, $\nu^\pm \vv u^\pm$ and $p^\pm$, enjoy uniform estimates in the corresponding subdomains, just as for the single phase problem. The estimate \eqref{EstNew} can be also seen as \emph{the uniform estimate for the components of Cauchy stress tensor}, an important quantity in practical fluid mechanics.

In the same spirit as  \eqref{EstNew} improves over the energy estimate  \eqref{EstEng}, the finite element analysis developed in this paper extends the existing one by
deriving robust in $\nu$ stability estimates and error estimates for the  components of the finite element Cauchy stress tensor.
Following~\cite{hansbo2014cut}, for the discretization of \eqref{eq:contform} we consider a geometrically unfitted finite element method known as Nitsche-XFEM or cutFEM. Geometrically unfitted methods use a fixed background mesh which does not respect the position of the interface. The main advantage of unfitted FEM is the relative ease of handling time-dependent domains, implicitly defined interfaces and problems with strong geometric deformations~\cite{bordas2018geometrically}.
We prove uniform with respect to $\nu$ stability and error estimates  for the unfitted FEM. These results hold for a family of bulk LBB-stable finite element Stokes pairs defined on the background mesh. These pairs include $P_{k+1}-P_k$, $k\ge 1$, and   $P_{k+d}-P_{k}^{\rm disc}$ for $k\ge0$, $\Omega\subset\mathbb{R}^d$, $d=2,3$, and several other elements. We are able to accomplish this by combining ideas from the two papers \cite{guzman-olshanskii,bgss-2017}. In \cite{guzman-olshanskii} an unfitted FEM for the single phase Stokes problem was analyzed. We borrow some crucial inf-sup stability estimates from that paper.
In \cite{bgss-2017} similar stability results were proved the Poisson interface problem. The chief tool was to use extension operators in Sobolev spaces. Similarly, here extension operators are essential, however, the pressure terms and the div-free condition add several new difficulties.

We organized the paper in five sections. In section~\ref{sec2} a notion of the weak solution is introduced and estimate  \eqref{EstEng1} is proved. Section~\ref{sec3} describes the finite element method and proves the analogue of \eqref{EstEng1} for the finite element solution. In section~\ref{sec4} a $\nu$-independent optimal order error estimate is proved. Finally, a few illustrative results of numerical experiments are given in section~\ref{sec5}.

\section{\emph{A priori} analysis for \eqref{eq:contform} }
\label{sec2}
\subsection{Preliminaries and problem setting}
We introduce a variational formulation of \eqref{eq:contform} and several notations to be used throughout the paper. For an open set $\mathcal O\subset\mathbb R^d$  denote by $(\cdot,\cdot)_{\mathcal O}$ the $L^2$ inner product in $\mathcal O$, and by  $\|\cdot\|_{\mathcal O}$ the  corresponding norm.  For the mixed variational formulation of \eqref{eq:contform}, we set $\vv V:=[\mathrm{H}_0^1(\Omega)]^d$ for the space the vector field $\vv u$ belongs to, whereas for the pressure $p$ we set $M=L_0^2(\Omega)$, with $L_0^2(\mathcal O )=\{p\in L^2(\mathcal O):(p,1)_{\mathcal O}=0\}$. We let $\| \cdot \|_{1,\mathcal O}$ denote the $H^1({\mathcal O})$-norm.
 {
 The norm of $\vv V^*$, the dual of $\vv V$, is denoted by $\|\cdot\|_{-1}$, and
$\langle \cdot,\cdot\rangle_{-1}$ denotes the  pairing, with respect to the $L^2$-duality.
}

We consider the abstract mixed formulation: Find $(\vv u,p)\in \vv V\times M$ such that
\begin{equation}\label{eq:mixedform}
\begin{split}
a(\vv u,\vv v) +  b(\vv v,p)  & =  \langle \widehat{\vv f},\vv v\rangle_{-1} \qquad   \forall\,\vv v\in \vv V,\\
b(\vv u,q) & =  -(g, q)_{\Omega} \qquad \forall\,q\in M,
\end{split}
\end{equation}
where
\[a(\vv u,\vv v):=(\nu \,D(\vv u),D(\vv v))_{\Omega},\quad b(\vv v,q):=-(\div\,\vv v,q)_{\Omega},\quad \text{and}~~\widehat{\vv f}\in\vv V^*.
\] %{and where $\widehat{\vv f}\in\vv V^*$}.

The problem \eqref{eq:mixedform} is the weak formulation of the Stokes interface equation \eqref{eq:contform}  if we let
\begin{equation}\label{eq:Functional}
\langle \widehat{\vv f},\vv v\rangle_{-1}:=\int_\Omega\vv f\cdot \vv v+\int_\Gamma\bm{\lambda}\cdot\vv v.
\end{equation}

\subsection{Stability estimates for the weak solution}
%\label{sec:wpcont}
In this section, we analyze the variational formulation   {\eqref{eq:mixedform} of the Stokes interface problem} \eqref{eq:contform}.
We are interested in the following stability result for the solution $(\vv u,p)\in \vv V\times M$ of \eqref{eq:mixedform}:
\begin{equation}\label{eq:contaim}
\|\nu D(\vv u)\|_{\Omega}+\|p\|_{\Omega}\le C(\| {\widehat{\vv f}}\|_{-1}+ \|\nu g\|_{\Omega}),
\end{equation}
with a constant $C>0$ independent of $\nu$ and depending only on $\Omega$ and $\Gamma$.  A standard energy argument gives estimates for $\|\sqrt{\nu}\, D(\vv u)\|_{\Omega}$ with the corresponding constant $C$ on the right-hand side dependent on $\nu$.
Without any loss of generality we shall always assume $\nu^{-}\le\nu^{+}$. The key to get the improved result \eqref{eq:contaim} is using an energy argument to estimate $\|\nu^- D(\vv u)\|_{\Omega^-}$,  {and} then using an extension operator to estimate $\|\nu^+ D(\vv u)\|_{\Omega^+}$. This strategy is similar to  {the one taken in \cite{bgss-2017} for the Poisson interface problem}. However, here the pressure term and the divergence condition require careful treatment, by a repetitive use of a continuous inf-sup condition.

In what follows, for an open set $\mathcal O$ and a function $q\in L^2(\mathcal O)$, we denote its average over $\mathcal O$ by the expression $\mathrm{avg}_{\mathcal O}(q)=|\mathcal O|^{-1}(q,1)_{\mathcal O}$. We start by proving the following results.

\begin{lemma}\label{lemma:ppm}
 Let $\vv u \in \vv V$ and $p \in M$ solve \eqref{eq:mixedform}. Then there exists $C>0$, depending only on $\Omega$ and $\Gamma$, such that
 \begin{equation}\label{eq:ppm}
 \|p^{\pm} -\mathrm{avg}_{ \Omega^\pm}(p^\pm)\|_{\Omega^\pm}\le C ( \|\nu^\pm D(\vv u^\pm)\|_{\Omega^\pm}+\| {\widehat{\vv f}}\|_{-1}),
 \end{equation}
\begin{equation}\label{eq:pestimate}
\|p\|_{\Omega}\le C (\|\nu D(\vv u)\|_{\Omega}+\| {\widehat{\vv f}}\|_{-1}).
\end{equation}
\end{lemma}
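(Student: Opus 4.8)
The plan is to derive both inequalities from the continuous inf-sup (LBB) property of the divergence operator: on any bounded Lipschitz set $\mathcal O$, for every $q\in L_0^2(\mathcal O)$ there is $\vv v\in[\mathrm H_0^1(\mathcal O)]^d$ with $\div\,\vv v=q$ and $\|\vv v\|_{1,\mathcal O}\le C_{\mathcal O}\|q\|_{\mathcal O}$, the constant $C_{\mathcal O}$ depending only on the shape of $\mathcal O$ and, in particular, not on $\nu$. I would apply this on each subdomain $\Omega^\pm$ to get \eqref{eq:ppm}, and globally on $\Omega$ to get \eqref{eq:pestimate}; in both cases the resulting field serves as a test function in the momentum equation of \eqref{eq:mixedform}, and the divergence term is recognized, after subtracting a constant, as the square of the pressure norm to be controlled.

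For \eqref{eq:ppm}, fix a sign and set $q^\pm:=p^\pm-\mathrm{avg}_{\Omega^\pm}(p^\pm)$, which lies in $L_0^2(\Omega^\pm)$. Pick $\vv v^\pm\in[\mathrm H_0^1(\Omega^\pm)]^d$ with $\div\,\vv v^\pm=q^\pm$ and $\|\vv v^\pm\|_{1,\Omega^\pm}\le C\|q^\pm\|_{\Omega^\pm}$, and extend it by zero to $\Omega$; the extension $\widetilde{\vv v}$ belongs to $\vv V$ precisely because $\vv v^\pm$ has vanishing trace on all of $\partial\Omega^\pm$ (including $\Gamma$), and $\|\widetilde{\vv v}\|_{1,\Omega}=\|\vv v^\pm\|_{1,\Omega^\pm}$. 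Inserting $\widetilde{\vv v}$ into the first equation of \eqref{eq:mixedform} and using that $\widetilde{\vv v}$ is supported in $\Omega^\pm$, the bilinear terms localize: $a(\vv u,\widetilde{\vv v})=(\nu^\pm D(\vv u^\pm),D(\vv v^\pm))_{\Omega^\pm}$, and, since $(q^\pm,1)_{\Omega^\pm}=0$, $-b(\widetilde{\vv v},p)=(q^\pm,p^\pm)_{\Omega^\pm}=\|q^\pm\|_{\Omega^\pm}^2$. This yields the identity $\|q^\pm\|_{\Omega^\pm}^2=a(\vv u,\widetilde{\vv v})-\langle\widehat{\vv f},\widetilde{\vv v}\rangle_{-1}$, after which Cauchy--Schwarz, the bounds $\|D(\vv v^\pm)\|_{\Omega^\pm}\le\|\vv v^\pm\|_{1,\Omega^\pm}$ and $\langle\widehat{\vv f},\widetilde{\vv v}\rangle_{-1}\le\|\widehat{\vv f}\|_{-1}\|\widetilde{\vv v}\|_{1,\Omega}$, and division by $\|q^\pm\|_{\Omega^\pm}$ give \eqref{eq:ppm}.

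For \eqref{eq:pestimate} I would run the same argument globally: $p\in M=L_0^2(\Omega)$, so choose $\vv v\in\vv V$ with $\div\,\vv v=p$ and $\|\vv v\|_{1,\Omega}\le C\|p\|_{\Omega}$, test the momentum equation with it, and obtain $\|p\|_{\Omega}^2=a(\vv u,\vv v)-\langle\widehat{\vv f},\vv v\rangle_{-1}\le(\|\nu D(\vv u)\|_{\Omega}+\|\widehat{\vv f}\|_{-1})\,\|\vv v\|_{1,\Omega}$, then divide. An alternative is to sum the squares of \eqref{eq:ppm} over the two phases and separately bound the one remaining degree of freedom, namely the jump of the subdomain averages $\mathrm{avg}_{\Omega^\pm}(p^\pm)$, using $(p,1)_\Omega=0$; but the direct global inf-sup is shorter.

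I do not anticipate a genuine obstacle here: the estimate is essentially the textbook Stokes pressure bound carried out once per subdomain. The two points requiring care are (i) that the zero extension of $\vv v^\pm$ really belongs to $\vv V=[\mathrm H_0^1(\Omega)]^d$, which is exactly where one uses $\vv v^\pm\in[\mathrm H_0^1(\Omega^\pm)]^d$ rather than merely a divergence-controlled field on some larger set; and (ii) that every constant entering the argument originates from an inf-sup inequality on $\Omega^\pm$ or $\Omega$ together with Korn/Poincar\'e, hence depends only on $\Omega$ and $\Gamma$ and is independent of $\nu$ --- which is the whole point of the statement.
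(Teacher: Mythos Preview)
Your proof is correct and essentially identical to the paper's: both apply Bogovski\u{\i}'s right-inverse of the divergence on $\Omega^\pm$ (and then globally on $\Omega$), extend the resulting field by zero, test the momentum equation, and cancel the pressure norm. The only cosmetic difference is that the paper cites \cite{bogovskii1979solution} explicitly, while you phrase the same fact as the continuous inf-sup property.
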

\begin{proof}
Since $p^\pm-\mathrm{avg}_{\Omega^\pm}(p^\pm)\in L_0^2(\Omega^\pm)$, the result from \cite{bogovskii1979solution} ensures the existence of $\vv v^\pm\in \vv H_0^1(\Omega^\pm)$ such that $\div\,\vv v^\pm=p^\pm-\mathrm{avg}_{\Omega^\pm}(p^\pm)$ in $\Omega^\pm$, and $\|\vv v^\pm\|_{1,\Omega^\pm}\le C\|p^\pm-\mathrm{avg}_{\Omega^\pm}(p^\pm)\|_{\Omega}$, for some $C>0$ depending only on $\Omega$ and $\Gamma$. Therefore, using the first equation of \eqref{eq:mixedform} with $\vv v^\pm$ in $\Omega^\pm$  {extended by zero} on $\Omega^\mp$ and since $(\div\,\vv v^\pm,1)_{\Omega^\pm}=0$, we get
\[
\begin{split}
&\|p^\pm-\mathrm{avg}_{\Omega^\pm}(p^\pm)\|_{\Omega^\pm}^2=(p^\pm-\mathrm{avg}_{\Omega^\pm}(p^\pm),\div\,\vv v^\pm)_{\Omega^\pm}=(\div\,\vv v^\pm,p^\pm)_\Omega\\
&\qquad=\big(\nu^\pm D(\vv u^\pm),D(\vv v^\pm)\big)_{\Omega^\pm}-\langle  {\widehat{\vv f}}, {\vv v^{\pm}}\rangle_{-1} \le \big(\|\nu^\pm D(\vv u^\pm)\|_{\Omega^\pm}+\| {\widehat{\vv f}}\|_{-1}\big) \|\vv v^\pm\|_{1, \Omega^\pm} \\
&\qquad\le C\big(\|\nu^\pm D(\vv u^\pm)\|_{\Omega^\pm}+\| {\widehat{\vv f}}\|_{-1}\big)\|p^\pm-\mathrm{avg}_{\Omega^\pm}(p^\pm)\|_{\Omega^\pm},
\end{split}
\]
which gives \eqref{eq:ppm}. On the other hand, since $p\in M$, there exists $\vv v\in \vv V$ such that $\div\,\vv v=p$ in $\Omega$, and $\|\vv v\|_{1,\Omega}\le C\|p\|_{\Omega}$, for some $C>0$, depending only on $\Omega^\pm$. Therefore, using the first equation of \eqref{eq:mixedform} with $\vv v\in \vv V$, we get \[\|p\|_{\Omega}^2=(\div\,\vv v,p)_{\Omega}=(\nu D(\vv u),D(\vv v))_{\Omega}-\langle {\widehat{\vv f}},\vv v\rangle_{-1}\le C\|p\|_{\Omega}\big(\|\nu D(\vv u)\|_{\Omega}+\| {\widehat{\vv f}}\|_{-1}\big), \]which proves \eqref{eq:pestimate}.
\end{proof}

In order to prove \eqref{eq:contaim}, we start by estimating $\nu^-D(\vv u^{-})$. This is an easier part of the desired  estimate \eqref{eq:contaim} since $\nu^{-}\le\nu^{+}$.
\begin{lemma}\label{lem:rho-}
Let $\vv u \in \vv V$ and $p \in M$ solve \eqref{eq:mixedform}, then there exists $C>0$, depending only on $\Omega$, such that
  \begin{equation}\label{eq:errrhom}
  \|\nu^-D(\vv u^-)\|_{\Omega^-}\le C(\| {\widehat{\vv f}}\|_{-1}+ \|\nu g\|_{\Omega}).
  \end{equation}
\end{lemma}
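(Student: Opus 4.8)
The plan is to run the classical energy argument --- testing the momentum equation with the velocity --- after first correcting the test function to be solenoidal, so that the pressure drops out, and choosing the correction so that its strain tensor is controlled \emph{phase by phase} by $g$. Accordingly, the first step is to construct a lift $\vv w\in\vv V$ with $\div\,\vv w=g$ in $\Omega$ and, crucially, $\|D(\vv w)\|_{\Omega^{\pm}}\le C\|g\|_{\Omega^{\pm}}$ with $C$ depending only on $\Omega$ and $\Gamma$. I build $\vv w$ in three pieces. Applying the divergence right-inverse of \cite{bogovskii1979solution} on $\Omega^{+}$ and on $\Omega^{-}$ separately, choose $\vv w^{\pm}\in\vv H_0^1(\Omega^{\pm})$, extended by zero to $\Omega$, with $\div\,\vv w^{\pm}=g-\mathrm{avg}_{\Omega^{\pm}}(g)$ in $\Omega^{\pm}$ and $\|\vv w^{\pm}\|_{1,\Omega^{\pm}}\le C\|g\|_{\Omega^{\pm}}$; then fix once and for all a field $\bm\eta\in\vv V$ with $\div\,\bm\eta=|\Omega^{+}|^{-1}\chi_{\Omega^{+}}-|\Omega^{-}|^{-1}\chi_{\Omega^{-}}$ (again by \cite{bogovskii1979solution}, since this right-hand side has zero mean), and set $\vv w:=\vv w^{+}+\vv w^{-}+\kappa\,\bm\eta$ with $\kappa:=\int_{\Omega^{+}}g=-\int_{\Omega^{-}}g$ (recall $g$ has zero mean, which is built into \eqref{eq:contform}, so that $\div\,\vv u=g$). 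On each phase the leftover constant divergence from $\vv w^{\pm}$ is exactly cancelled by the constant divergence of $\kappa\bm\eta$, so $\div\,\vv w=g$; and since $\bm\eta$ is fixed and $|\kappa|\le|\Omega^{\pm}|^{1/2}\|g\|_{\Omega^{\pm}}$, we get $\|D(\vv w)\|_{\Omega^{\pm}}\le\|D(\vv w^{\pm})\|_{\Omega^{\pm}}+|\kappa|\,\|D(\bm\eta)\|_{\Omega^{\pm}}\le C\|g\|_{\Omega^{\pm}}$, whence also $\|\vv w\|_{1,\Omega}\le C\|g\|_{\Omega}$ by Korn's inequality in $\vv H_0^1(\Omega)$.

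With this $\vv w$ at hand I test the first equation of \eqref{eq:mixedform} with $\vv v:=\vv u-\vv w\in\vv V$: since $\div\,\vv v=0$ we have $b(\vv v,p)=0$, so
\[
a(\vv u,\vv u)=a(\vv u,\vv w)+\langle\widehat{\vv f},\vv u\rangle_{-1}-\langle\widehat{\vv f},\vv w\rangle_{-1}.
\]
Multiplying by $\nu^{-}$ and splitting the left-hand side over the two phases gives $\nu^{-}a(\vv u,\vv u)=A^{2}+B^{2}$, where $A:=\|\nu^{-}D(\vv u^{-})\|_{\Omega^{-}}$ and $B:=(\nu^{-}\nu^{+})^{1/2}\|D(\vv u^{+})\|_{\Omega^{+}}$. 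I then estimate the right-hand side by Cauchy--Schwarz and Young's inequality, each time keeping a fixed fraction of $A^{2}+B^{2}$ absorbable on the left. The $\Omega^{+}$-part of $\nu^{-}a(\vv u,\vv w)$ is $\nu^{-}\nu^{+}(D(\vv u^{+}),D(\vv w))_{\Omega^{+}}\le(\nu^{-}\nu^{+})^{1/2}B\cdot C\|g\|_{\Omega^{+}}\le\tfrac18B^{2}+C(\nu^{+})^{2}\|g\|_{\Omega^{+}}^{2}\le\tfrac18B^{2}+C\|\nu g\|_{\Omega}^{2}$ --- here the phase-localized bound on $D(\vv w)$ is precisely what turns the unavoidable $\nu^{+}$ weight into the admissible quantity $\|\nu g\|_{\Omega}$ --- and the $\Omega^{-}$-part is handled the same way against $A$ using $(\nu^{-})^{2}\|g\|_{\Omega^{-}}^{2}\le\|\nu g\|_{\Omega}^{2}$. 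For the data terms, Korn's inequality and $\nu^{-}\le\nu^{+}$ give $\|\vv u\|_{1,\Omega}\le C\|D(\vv u)\|_{\Omega}\le C(\nu^{-})^{-1}(A^{2}+B^{2})^{1/2}$, so $\nu^{-}\langle\widehat{\vv f},\vv u\rangle_{-1}\le C\|\widehat{\vv f}\|_{-1}(A^{2}+B^{2})^{1/2}\le\tfrac18(A^{2}+B^{2})+C\|\widehat{\vv f}\|_{-1}^{2}$, while $\nu^{-}|\langle\widehat{\vv f},\vv w\rangle_{-1}|\le\nu^{-}\|\widehat{\vv f}\|_{-1}\|\vv w\|_{1,\Omega}\le C\|\widehat{\vv f}\|_{-1}^{2}+C(\nu^{-})^{2}\|g\|_{\Omega}^{2}\le C(\|\widehat{\vv f}\|_{-1}^{2}+\|\nu g\|_{\Omega}^{2})$. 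Collecting these bounds yields $A^{2}+B^{2}\le\tfrac14(A^{2}+B^{2})+C(\|\widehat{\vv f}\|_{-1}^{2}+\|\nu g\|_{\Omega}^{2})$, and therefore $A\le C(\|\widehat{\vv f}\|_{-1}+\|\nu g\|_{\Omega})$, which is \eqref{eq:errrhom}.

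The hard part is the lift --- specifically, obtaining the \emph{phase-localized} control $\|D(\vv w)\|_{\Omega^{+}}\le C\|g\|_{\Omega^{+}}$ and not merely the naive $\|D(\vv w)\|_{\Omega^{+}}\le C\|g\|_{\Omega}$ that a single Bogovskii lift on all of $\Omega$ would give. Such a global lift inevitably pumps $\|g\|_{\Omega^{-}}$ into $\|D(\vv w)\|_{\Omega^{+}}$, and that contribution, carrying the weight $\nu^{+}$ inside $a(\vv u,\vv w)$, cannot be absorbed into $\|\nu g\|_{\Omega}$ when $\nu^{+}/\nu^{-}$ is large. The correction field $\kappa\bm\eta$ --- fixed once $\Omega^{\pm}$ are fixed --- is exactly what decouples the two phases while still honoring the genuine global coupling, namely the net flux $\kappa=\int_{\Omega^{+}}g$ across $\Gamma$, and the elementary observation $|\kappa|\le|\Omega^{\pm}|^{1/2}\|g\|_{\Omega^{\pm}}$ renders this correction harmless in both subdomains. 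Everything past the construction of $\vv w$ is a routine energy identity with a solenoidal test function plus bookkeeping with Young's inequality; in particular the pressure never appears, so Lemma \ref{lemma:ppm} is not needed for this estimate.
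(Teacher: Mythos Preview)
Your proof is correct and takes a genuinely different route from the paper's. The paper tests with $\vv v=\vv u$ and $q=p$, which leaves the coupling term $(g,p)_\Omega$ on the right; it then decomposes $(g,p)_\Omega$ phase by phase using the pressure estimates of Lemma~\ref{lemma:ppm} (both \eqref{eq:ppm} and \eqref{eq:pestimate}) to convert $p$ back into $\nu^{1/2}D(\vv u)$, and only then closes the energy inequality. Your argument instead removes the pressure at the outset by testing with the solenoidal correction $\vv u-\vv w$, where the lift $\vv w$ is built in three pieces (two local Bogovski\u{\i} inverses plus a fixed ``flux carrier'' $\kappa\bm\eta$) so as to achieve the phase-localized bound $\|D(\vv w)\|_{\Omega^{\pm}}\le C\|g\|_{\Omega^{\pm}}$. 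That localized control is exactly what makes the $\Omega^+$-contribution $\nu^-\nu^+(D(\vv u^+),D(\vv w))_{\Omega^+}$ land in $\|\nu g\|_\Omega^2$ rather than in $(\nu^+)^2\|g\|_{\Omega^-}^2$; you correctly identify this as the crux. What each approach buys: yours is self-contained and does not rely on Lemma~\ref{lemma:ppm}, at the price of the auxiliary construction of $\vv w$; the paper's approach reuses pressure estimates that are needed anyway for \eqref{eq:pestimate} and for the later $\Omega^+$ bound, so in the overall flow of Section~\ref{sec2} it introduces no extra machinery. Both yield the same constant dependence (only on $\Omega$ and $\Gamma$).
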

\begin{proof} The result follows from basic energy arguments and pressure estimates provided by Lemma~\ref{lemma:ppm}.
  To see this, let us test the first equation of \eqref{eq:mixedform} with $\vv v=\vv u$ and the second equation with $q=p$ to get
\begin{equation}\label{aux1000}
\|\nu^{\frac12} D(\vv u)\|_{\Omega}^2 =\langle {\widehat{\vv f}},\vv u\rangle_{-1} + (g, p)_{\Omega}.
\end{equation}
We start by estimating the first term on the right-hand side
\begin{equation}\label{aux1001}
 \langle {\widehat{\vv f}},\vv u\rangle_{-1} \le   C\| {\widehat{\vv f}}\|_{-1}\|D(\vv u)\|_{\Omega} \le \frac{C\| {\widehat{\vv f}}\|_{-1}}{\sqrt{\nu^-}}\|\nu^{\frac12} D(\vv u)\|_{\Omega}
 \end{equation}
 where we used  Poincare's inequality, Korn's inequality and the fact that $0<\nu^-\le \nu^+$. To estimate the second term, we use the following decomposition
 \[
 \begin{split}
 (g, p)_{\Omega}=&  (g^-, p^--\mathrm{avg}_{\Omega^-}(p^-))_{\Omega^-} +( g^-, \mathrm{avg}_{\Omega^-}(p^-))_{\Omega^-} \\
 &\,+(g^+, p^+-\mathrm{avg}_{\Omega^+}(p^+))_{\Omega^+} +( g^+, \mathrm{avg}_{\Omega^+}(p^+))_{\Omega^+}.
 \end{split}
 \]
 By \eqref{eq:ppm} we have
 \[
 \begin{split}
 & (g^-, p^--\mathrm{avg}_{\Omega^-}(p^-))_{\Omega^-}+ (g^+, p^+-\mathrm{avg}_{\Omega^+}(p^+))_{\Omega^+} \\
 & \qquad \le C ( \sqrt{\nu^-} \| g^-\|_{\Omega^-} \| \sqrt{\nu^-} D(\vv u^-)\|_{\Omega^-}+  \sqrt{\nu^+} \| g^+\|_{\Omega^+} \|\sqrt{\nu^+} D(\vv u^+)\|_{\Omega^+}) + C \|g\|_{\Omega} \| {\widehat{\vv f}}\|_{-1} \\
 & \qquad \le C ( \|\nu^{\frac12} g\|_{\Omega} \|\nu^{\frac12} D(\vv u)\|_{\Omega}+ \|g\|_{\Omega} \| {\widehat{\vv f}}\|_{-1}).
 \end{split}
 \]
From $g \in M$ it follows  that $(g,1)_{\Omega}=1$ and so $|\Omega^{+}|\mathrm{avg}_{\Omega^+}(p^+)=-|\Omega^{-}|\mathrm{avg}_{\Omega^-}(p^-)$.  We employ this equality below to obtain
 \[
\begin{split}
& ( g^-, \mathrm{avg}_{\Omega^-}(p^-))_{\Omega^-}+ ( g^+, \mathrm{avg}_{\Omega^+}(p^+))_{\Omega^+} \\
&\qquad\qquad= ( \mathrm{avg}_{\Omega^-}(g^-), \mathrm{avg}_{\Omega^-}(p^-))_{\Omega^-}+ (  \mathrm{avg}_{\Omega^+}(g^+), \mathrm{avg}_{\Omega^+}(p^+))_{\Omega^+} \\
&\qquad\qquad= -\frac{|\Omega^{+}|}{|\Omega^{-}|} ( \mathrm{avg}_{\Omega^+}(g^+), \mathrm{avg}_{\Omega^-}(p^-))_{\Omega^-} + (  \mathrm{avg}_{\Omega^+}(g^+), \mathrm{avg}_{\Omega^+}(p^+))_{\Omega^+}.
\end{split}
\]
Hence, we have
\[
\begin{split}
&( g^-, \mathrm{avg}_{\Omega^-}(p^-))_{\Omega^-}+ ( g^+, \mathrm{avg}_{\Omega^+}(p^+))_{\Omega^+}\le C\, \|g^+\|_{\Omega^+} \|p\|_{\Omega} \le C\, \|g^+\|_{\Omega^+} (\|\nu D(\vv u)\|_{\Omega}+ \| {\widehat{\vv f}}\|_{-1}) \\
& \qquad\le C\, \sqrt{\nu^+} \|g^+\|_{\Omega^+} \|\nu^{\frac12} D(\vv u)\|_{\Omega} + C\|g\|_{\Omega} \| {\widehat{\vv f}}\|_{-1},
\end{split}
\]
where we used \eqref{eq:pestimate} and the fact that $\nu^- \le \nu^+$.
Thus, we have shown that
\begin{equation}\label{aux1002}
%\begin{split}
(g, p)_{\Omega} \le C ( \|\nu^{\frac12} g\|_{\Omega} \|\nu^{\frac12} D(\vv u)\|_{\Omega}+ \|g\|_{\Omega} \| {\widehat{\vv f}}\|_{-1}).
%\end{split}
\end{equation}
Combining \eqref{aux1000},  \eqref{aux1001},  \eqref{aux1002}  and using that  $\nu^- \le \nu^+$ we arrive at
\begin{equation*}
\|\nu^{\frac12} D(\vv u)\|_{\Omega} \le C \bigg(\frac{1}{\sqrt{\nu^-}}\| {\widehat{\vv f}}\|_{-1}+\|\nu^{\frac12} g\|_{\Omega} \bigg).
\end{equation*}
This implies the result due to $\nu^- \le \nu^+$.
\end{proof}

An immediate consequence of Lemma \ref{lem:rho-} and \eqref{eq:ppm} is the desired pressure estimate in $\Omega^{-}$.
\begin{lemma} \label{lem:boundpavg}
 Let $\vv u \in \vv V$ and $p \in M$ solve \eqref{eq:mixedform},  then there exists $C>0$, depending only on $\Omega$, such that
 %\begin{equation}\label{eq:errp-p}
 \[
  \|p^--\mathrm{avg}_{\Omega^-}(p^-)\|_{\Omega^-}\le C(\| {\widehat{\vv f}}\|_{-1}+ \|\nu g\|_{\Omega}).
 \]
  %\end{equation}
\end{lemma}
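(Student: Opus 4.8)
This statement should follow with essentially no new work, since it is just the combination of two inequalities that are already available. The plan is as follows. First I would invoke the lower-sign version of estimate \eqref{eq:ppm} from Lemma~\ref{lemma:ppm}, which gives
\[
\|p^- - \mathrm{avg}_{\Omega^-}(p^-)\|_{\Omega^-} \le C\big(\|\nu^- D(\vv u^-)\|_{\Omega^-} + \|\widehat{\vv f}\|_{-1}\big).
\]
Then I would substitute the bound on $\|\nu^- D(\vv u^-)\|_{\Omega^-}$ furnished by Lemma~\ref{lem:rho-}, namely $\|\nu^- D(\vv u^-)\|_{\Omega^-} \le C(\|\widehat{\vv f}\|_{-1} + \|\nu g\|_{\Omega})$, and collect constants. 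The two displays chain together immediately to yield the claimed estimate, with $C$ depending only on $\Omega$ (and $\Gamma$).

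The only point worth a moment's attention is that the right-hand sides are stated in terms of $\|\nu g\|_\Omega$ rather than a weighted $\sqrt{\nu}$-norm, but this is already the form in which Lemma~\ref{lem:rho-} is phrased, so nothing extra is needed here; one simply uses $\nu^- \le \nu^+$ if any rescaling of the $g$-term arises. There is no real obstacle in this lemma — all the analytic content (the energy argument, the Bogovskii-type right inverse of the divergence on $\Omega^\pm$, and the repeated use of the inf-sup condition) has already been carried out in Lemmas~\ref{lemma:ppm} and~\ref{lem:rho-}. Accordingly I would present the proof in two or three lines.
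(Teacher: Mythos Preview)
Your proposal is correct and matches the paper's own approach: the paper states this lemma as an immediate consequence of \eqref{eq:ppm} (with the minus sign) and Lemma~\ref{lem:rho-}, exactly as you describe.
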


It remains to show analogues bound for $\nu D(\vv u)$ and $p$ on $\Omega^+$.
To estimate $\nu D(\vv u)$ in $\Omega^+$, we consider an extension operator. A detailed construction of this operator can be found in \cite[Chapter VI, Section 3.3]{SteinBook}. Let $E:\vv H^1(\Omega^+)\to \vv H^1(\Omega)$ be the bounded extension operator from $\Omega^+$ to $\Omega$. That is, if $\vv w\in \mathbf H^1(\Omega^+)$, then
\begin{equation}\label{eq:extprop}
E\vv w\in \mathbf H^1(\Omega),\quad E\vv w=\vv w\,\,\,\text{ in }\,\,\,\Omega^+,\quad\text{and}\quad \|E\vv w\|_{1,\Omega}\le C\|\vv w\|_{1,\Omega^+},
\end{equation}
with a  constant $C>0$, depending only on $\Omega^+$ and $\Omega$. Moreover, one can assume $E\vv w$ to vanish on $\partial \Omega$, i.e., $E \vv w \in \vv V$ for $\vv w \in\vv V$. Further, we note that if $\vv w(\vv x):=\vv a+\vv B\vv x\in \mathbb{RM}(\Omega)$, then
%\begin{equation}\label{eq:proprm}
\[
D(\vv w)=\vv 0,\quad  {\text{and hence}~ \div \,\vv w=\tr(D(\vv w))=0}.
\]
%\end{equation}
Here $\mathbb{RM}(\mathcal O)$ is the space of  {rigid} body motions defined on $\mathcal O$. We denote by $\mathcal P^{RM}_{\mathcal O}( \vv v)$ the $L^2$-orthogonal projection of $\vv v\in L^2(\mathcal O)^d$ onto the subspace of rigid body motions.
%, i.e.
%\begin{equation*}
%$
%(\mathcal P^{RM}_{\mathcal O}( \vv v) , \vv w)_{\mathcal O} =  ( \vv v , \vv w)_{\mathcal O}$ for all $\vv w \in  \mathbb{RM}(\mathcal O).
%$
%\end{equation*}

\begin{lemma}\label{lemma:eq:errorflux}
Let $\vv u \in \vv V$ and $p \in M$ solve \eqref{eq:mixedform},  then there exists $C>0$, depending only on $\Omega$ and $\Gamma$, such that
\begin{equation}\label{eq:errorflux}
\|\nu^+D(\vv u^+)\|_{\Omega^+}\le
C\big(\| {\widehat{\vv f}}\|_{-1} +\|\nu g\|_{\Omega}\big).
\end{equation}
\end{lemma}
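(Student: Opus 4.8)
The plan is to test the first equation of \eqref{eq:mixedform} with a field in $\vv V$ that coincides, up to a rigid body motion, with $\vv u^+$ on $\Omega^+$, and to check that the unavoidable occurrences of $\|\nu^+D(\vv u^+)\|_{\Omega^+}$ produced on the right-hand side always appear multiplied by a factor $\|g^+\|_{\Omega^+}\le(\nu^+)^{-1}\|\nu g\|_{\Omega}$, so they can be absorbed. Concretely, I would set $\vv w:=\vv u^+-\mathcal P^{RM}_{\Omega^+}(\vv u^+)\in\vv H^1(\Omega^+)$ and take $\vv v:=E\vv w$, which by \eqref{eq:extprop} (with $E$ chosen so that $E\vv w$ vanishes on $\partial\Omega$) lies in $\vv V$, equals $\vv w$ on $\Omega^+$, and satisfies $\|\vv v\|_{1,\Omega}\le C\|\vv w\|_{1,\Omega^+}$. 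Since $\vv w$ is $L^2(\Omega^+)$-orthogonal to $\mathbb{RM}(\Omega^+)$, Korn's inequality gives $\|\vv w\|_{1,\Omega^+}\le C\|D(\vv w)\|_{\Omega^+}=C\|D(\vv u^+)\|_{\Omega^+}$, the last equality because rigid body motions have vanishing strain; and since they are also divergence free, $D(\vv v)=D(\vv u^+)$ and $\div\vv v=\div\vv u^+=g^+$ on $\Omega^+$.

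Testing the first equation of \eqref{eq:mixedform} with this $\vv v$ and splitting the bilinear form over $\Omega^\pm$, the $\Omega^+$ contribution is $\nu^+\|D(\vv u^+)\|_{\Omega^+}^2$, which leads to
\[
\nu^+\|D(\vv u^+)\|_{\Omega^+}^2=\langle\widehat{\vv f},\vv v\rangle_{-1}-\big(\nu^-D(\vv u^-),D(\vv v)\big)_{\Omega^-}+(\div\vv v,p)_{\Omega}.
\]
Abbreviating $X:=\|\nu^+D(\vv u^+)\|_{\Omega^+}$ and $\mathcal D:=\|\widehat{\vv f}\|_{-1}+\|\nu g\|_{\Omega}$, the left side equals $X^2/\nu^+$ and $\|\vv v\|_{1,\Omega}\le CX/\nu^+$, so $\langle\widehat{\vv f},\vv v\rangle_{-1}\le C\|\widehat{\vv f}\|_{-1}X/\nu^+$, while by Lemma~\ref{lem:rho-}, $\big(\nu^-D(\vv u^-),D(\vv v)\big)_{\Omega^-}\le\|\nu^-D(\vv u^-)\|_{\Omega^-}\|\vv v\|_{1,\Omega}\le C\mathcal D\,X/\nu^+$.

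The step I expect to be the main obstacle is the coupling term $(\div\vv v,p)_{\Omega}$, since $\vv v$ is \emph{not} solenoidal on $\Omega^-$ and $\div\vv v|_{\Omega^-}$ is not controlled by the data. I would write it as $(g^+,p^+)_{\Omega^+}+(\div\vv v,p^-)_{\Omega^-}$ and subtract the pressure average in each subdomain. On $\Omega^+$, $(g^+,p^+-\mathrm{avg}_{\Omega^+}(p^+))_{\Omega^+}\le\|g^+\|_{\Omega^+}\|p^+-\mathrm{avg}_{\Omega^+}(p^+)\|_{\Omega^+}$ is handled by \eqref{eq:ppm}, and $\mathrm{avg}_{\Omega^+}(p^+)(g^+,1)_{\Omega^+}\le|\Omega^+|^{1/2}\|g^+\|_{\Omega^+}\,|\mathrm{avg}_{\Omega^+}(p^+)|$ with $|\mathrm{avg}_{\Omega^+}(p^+)|\le C\|p\|_{\Omega}$. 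On $\Omega^-$, $(\div\vv v,p^--\mathrm{avg}_{\Omega^-}(p^-))_{\Omega^-}\le\|\vv v\|_{1,\Omega}\|p^--\mathrm{avg}_{\Omega^-}(p^-)\|_{\Omega^-}$ is handled by Lemma~\ref{lem:boundpavg}, and for $\mathrm{avg}_{\Omega^-}(p^-)(\div\vv v,1)_{\Omega^-}$ the crucial point is the identity $(\div\vv v,1)_{\Omega^-}=-(\div\vv v,1)_{\Omega^+}=-(g^+,1)_{\Omega^+}$, valid because $\vv v\in\vv V$ has zero net flux through $\partial\Omega$; hence this term is $\le|\Omega^+|^{1/2}\|g^+\|_{\Omega^+}\,|\mathrm{avg}_{\Omega^-}(p^-)|$ with $|\mathrm{avg}_{\Omega^-}(p^-)|\le C\|p\|_{\Omega}$. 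Now \eqref{eq:pestimate} and Lemma~\ref{lem:rho-} give $\|p\|_{\Omega}\le C(\|\nu D(\vv u)\|_{\Omega}+\|\widehat{\vv f}\|_{-1})\le C(X+\mathcal D)$, and with $\|g^+\|_{\Omega^+}\le(\nu^+)^{-1}\mathcal D$ and $\|\vv v\|_{1,\Omega}\le CX/\nu^+$ each of the four pieces is bounded by $C(\nu^+)^{-1}\mathcal D(X+\mathcal D)$, so $(\div\vv v,p)_{\Omega}\le C(\nu^+)^{-1}\mathcal D(X+\mathcal D)$.

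Combining the three bounds gives $X^2/\nu^+\le C(\nu^+)^{-1}\big(\mathcal D X+\mathcal D^2\big)$, i.e.\ $X^2\le C\mathcal D X+C\mathcal D^2$; Young's inequality absorbs $C\mathcal D X$ into $\tfrac12 X^2$ and yields $X=\|\nu^+D(\vv u^+)\|_{\Omega^+}\le C\mathcal D$, which is \eqref{eq:errorflux}. The only delicate point in the bookkeeping is that $X$ itself reappears on the right through $\|p\|_{\Omega}$, but this is harmless precisely because that occurrence is always multiplied by $\|g^+\|_{\Omega^+}$, whose $(\nu^+)^{-1}$ scaling matches that of the left-hand side.
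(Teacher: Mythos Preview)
Your approach is essentially the paper's: test with an extension of $\vv u^+$ (modulo rigid motions), control $\|\vv v\|_{1,\Omega}$ by $\|D(\vv u^+)\|_{\Omega^+}$ via Korn, and handle $(\div\vv v,p)_{\Omega}$ by subtracting subdomain pressure averages and using $(\div\vv v,1)_{\Omega}=0$. The bookkeeping with $X$ and $\mathcal D$ and the absorption at the end match the paper's argument.

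There is, however, one genuine gap in your construction of $\vv v$. You always set $\vv w=\vv u^+-\mathcal P^{RM}_{\Omega^+}(\vv u^+)$ and claim that $E\vv w\in\vv V$. But the property ``$E\vv w$ can be chosen to vanish on $\partial\Omega$'' in \eqref{eq:extprop} requires that $\vv w$ already vanish on $\partial\Omega\cap\partial\Omega^+$. When $|\partial\Omega^+\cap\partial\Omega|>0$ (i.e., $\Omega^-$ is the inclusion), $\vv u^+$ vanishes there but $\mathcal P^{RM}_{\Omega^+}(\vv u^+)$ generally does not, so $\vv w$ has a nonzero trace on part of $\partial\Omega$ and no extension agreeing with $\vv w$ on $\Omega^+$ can lie in $H^1_0(\Omega)$. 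The paper fixes this with a case split: if $|\partial\Omega^+\cap\partial\Omega|>0$, take $\vv v=E\vv u^+$ directly (Korn's first inequality applies since $\vv u^+$ vanishes on a portion of $\partial\Omega^+$ of positive measure); only in the case $|\partial\Omega^+\cap\partial\Omega|=0$ does one subtract the rigid-motion projection. With this correction your argument goes through unchanged, since in both cases $D(\vv v^+)=D(\vv u^+)$ and $\div\vv v^+=g^+$ on $\Omega^+$, which are the only properties you use.
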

\begin{proof}
Define $\vv v\in\vv V$ by
%\begin{equation}\label{eq:defext}
\[
\vv v=\begin{cases}E\vv u^+,&\text{if }|\partial\Omega^+\cap\partial\Omega|>0,\\ E(\vv u^+-\mathcal P^{RM}_{\Omega^+}(\vv u^+)),&\text{if }|\partial\Omega^+\cap\partial\Omega|=0.
\end{cases}
\]
%\end{equation}
The boundeness of the extension operator in \eqref{eq:extprop}, Poincare's and Korn's inequalities imply
\begin{equation} \label{aux111}
\|\vv v\|_{1, \Omega} \le C \, \|D(\vv u^+)\|_{\Omega^+}.
\end{equation}
The first equation of \eqref{eq:mixedform} with the above $\vv v$ gives
\begin{equation}\label{aux2001}
  \nu^+\|D(\vv u^+)\|_{\Omega^+}^2  =\langle {\widehat{\vv f}},\vv v\rangle_{-1}-(\nu^-D(\vv u^-),D(\vv v^-))_{\Omega^-}+(\div\, \vv v,p)_{\Omega},
\end{equation}
where we used that $ D(\vv v^+)= D(\vv u^+)$.
We now bound each term on the right-hand side of \eqref{aux2001}. Using \eqref{aux111} we have
\begin{equation}\label{aux2002}
\langle {\widehat{\vv f}},\vv v\rangle_{-1} \le C \, \| {\widehat{\vv f}}\|_{-1} \|D(\vv u^+)\|_{\Omega^+}.
\end{equation}
Thanks to \eqref{aux111}  and \eqref{eq:errrhom}  we bound the second term on the right hand side of \eqref{aux2001}
\begin{equation}\label{aux2003}
-(\nu^-D(\vv u^-),D(\vv v^-))_{\Omega^-} \le C \, (\| {\widehat{\vv f}}\|_{-1}  +\|\nu g\|_{\Omega})   \|D(\vv u^+)\|_{\Omega^+}.
\end{equation}
For the third term we use decomposition:
\begin{equation}\label{aux324}
(\div\, \vv v,p)_{\Omega}= (\div\, \vv v^-,p^--\mathrm{avg}_{\Omega^-}(p^-))_{\Omega^-}+(\div\, \vv v^-,\mathrm{avg}_{\Omega^-}(p^-))_{\Omega^-}+(\div\, \vv u^+, p^+)_{\Omega^+}
\end{equation}
Using \eqref{lem:boundpavg}  and \eqref{aux111} we estimate
\begin{equation*}
(\div\, \vv v^-,p^--\mathrm{avg}_{\Omega^-}(p^-))_{\Omega^-} \le C \, (\| {\widehat{\vv f}}\|_{-1}  +\|\nu g\|_{\Omega})   \|D(\vv u^+)\|_{\Omega^+}.
\end{equation*}
For the next term in \eqref{aux324} we use $(\div \,  \vv v, 1)_{\Omega}=0$ and the second equation in \eqref{eq:contform} to write
\[
\begin{split}
&(\div\, \vv v^-,\mathrm{avg}_{\Omega^-}(p^-))_{\Omega^-} \\
&\qquad =(\mathrm{avg}_{\Omega^-}(\div\, \vv v^-), \mathrm{avg}_{\Omega^-}(p^-))_{\Omega^-}
=-\frac{|\Omega^+|}{|\Omega^-|} (\mathrm{avg}_{\Omega^+}(\div\, \vv v^+), \mathrm{avg}_{\Omega^-}(p^-))_{\Omega^-} \\
&\qquad=-\frac{|\Omega^+|}{|\Omega^-|} (\mathrm{avg}_{\Omega^+}(\div \, \vv u^+), \mathrm{avg}_{\Omega^-}(p^-))_{\Omega^-} =-\frac{|\Omega^+|}{|\Omega^-|} (\mathrm{avg}_{\Omega^+}(g^+), \mathrm{avg}_{\Omega^-}(p^-))_{\Omega^-}.
\end{split}
\]
Hence, with the help of  \eqref{eq:pestimate} and \eqref{eq:errrhom} we get
\[
\begin{split}
(\div\, \vv v^-,\mathrm{avg}_{\Omega^-}(p^-))_{\Omega^-} & \le C \, \|g^+\|_{\Omega^+} \|p\|_{\Omega} \le C \|g^+\|_{\Omega^+} (\|\nu D(\vv u)\|_{\Omega} +\|\widehat{\vv f}\|_{-1})  \\
& \le C \nu^+ \|g^+\|_{\Omega^+}  \|D(\vv u^+)\|_{\Omega^+}+  C \|g^+\|_{\Omega^+} (\| {\widehat{\vv f}}\|_{-1}+\|\nu g\|_{\Omega}).
\end{split}
\]
Similarly, for the last term in \eqref{aux324} we have,
\[
%\begin{split}
(\div\, \vv u^+, p^+)_{\Omega^+} \le C \nu^+ \|g^+\|_{\Omega^+}  \|D(\vv u^+)\|_{\Omega^+} +  C \|g^+\|_{\Omega^+} (\| {\widehat{\vv f}}\|_{-1}+\|\nu g\|_{\Omega}).
%\end{split}
\]
We combine the last two estimates to obtain the bound
\[
%\begin{split}
(\div\, \vv v,p)_{\Omega}  \le (\| {\widehat{\vv f}}\|_{-1}  +\|\nu g\|_{\Omega})   \|D(\vv u^+)\|_{\Omega^+} + C \|g^+\|_{\Omega^+} (\| {\widehat{\vv f}}\|_{-1}+\|\nu g\|_{\Omega}).
%\end{split}
\]
This estimate together with \eqref{aux2003}, \eqref{aux2002}, \eqref{aux2001}  leads to another bound
\[
\begin{split}
  \nu^+\|D(\vv u^+)\|_{\Omega^+}^2 & \le (\| {\widehat{\vv f}}\|_{-1}  +\|\nu g\|_{\Omega})   \|D(\vv u^+)\|_{\Omega^+} + C \|g^+\|_{\Omega^+} (\| {\widehat{\vv f}}\|_{-1}+\|\nu g\|_{\Omega})\\
&  {\le \frac{1}{2\nu^+}(\|\widehat{\vv f}\|_{-1}  +\|\nu g\|_{\Omega})^2   +\frac{\nu^+}{2}\|D(\vv u^+)\|_{\Omega^+}^2 }+ C \|g^+\|_{\Omega^+} (\| {\widehat{\vv f}}\|_{-1}+\|\nu g\|_{\Omega}).
\end{split}
\]
This implies \eqref{eq:errorflux}  {after multiplication all through by $\nu^+$ and doing simple computations}.
\end{proof}
Collecting  \eqref{eq:pestimate}, \eqref{eq:errrhom} and \eqref{eq:errorflux} we obtain the main result of this section.
\begin{theorem}
	%\label{thm:cont}
 Let $\vv u \in \vv V$ and $p \in M$ solve \eqref{eq:mixedform}, then there exists $C>0$, depending only on $\Omega$ and $\Gamma$, such that
 \begin{equation} \label{eq:estflux}
  \|\nu D(\vv u)\|_{\Omega}+\|p\|_{\Omega}\le C(\| {\widehat{\vv f}}\|_{-1}  +\|\nu g\|_{\Omega})\le
  C (\|\vv f\|_{\Omega}+\| {\bm{\lambda}}\|_{\Gamma}+\|\nu\,g\|_{\Omega}).
 \end{equation}
\end{theorem}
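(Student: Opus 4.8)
The plan is simply to collect the three estimates \eqref{eq:pestimate}, \eqref{eq:errrhom} and \eqref{eq:errorflux} already established in this section and then to convert the abstract right-hand side $\|\widehat{\vv f}\|_{-1}$ into the concrete data norms. First I would split the strain term across the two phases, using $\|\nu D(\vv u)\|_{\Omega}^2 = \|\nu^+ D(\vv u^+)\|_{\Omega^+}^2 + \|\nu^- D(\vv u^-)\|_{\Omega^-}^2$, and bound the first summand by Lemma~\ref{lemma:eq:errorflux} (the extension-operator argument) and the second by Lemma~\ref{lem:rho-} (the energy argument). Both bounds are of the form $C(\|\widehat{\vv f}\|_{-1}+\|\nu g\|_{\Omega})$ with $C$ independent of $\nu$, hence $\|\nu D(\vv u)\|_{\Omega}\le C(\|\widehat{\vv f}\|_{-1}+\|\nu g\|_{\Omega})$.

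Next I would insert this bound into the global pressure estimate \eqref{eq:pestimate} of Lemma~\ref{lemma:ppm}, obtaining $\|p\|_{\Omega}\le C(\|\nu D(\vv u)\|_{\Omega}+\|\widehat{\vv f}\|_{-1})\le C(\|\widehat{\vv f}\|_{-1}+\|\nu g\|_{\Omega})$. Adding the velocity and pressure estimates gives the first inequality of \eqref{eq:estflux}.

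For the second inequality I would unfold the definition \eqref{eq:Functional} of $\widehat{\vv f}$: for $\vv v\in\vv V$ one has $\langle\widehat{\vv f},\vv v\rangle_{-1}=(\vv f,\vv v)_{\Omega}+(\bm{\lambda},\vv v)_{\Gamma}$. Cauchy--Schwarz, the trace inequality $\|\vv v\|_{\Gamma}\le C\|\vv v\|_{1,\Omega}$ (legitimate since $\Gamma$ is Lipschitz), and Poincar\'e's and Korn's inequalities then give $\langle\widehat{\vv f},\vv v\rangle_{-1}\le C(\|\vv f\|_{\Omega}+\|\bm{\lambda}\|_{\Gamma})\|\vv v\|_{1,\Omega}$, so that $\|\widehat{\vv f}\|_{-1}\le C(\|\vv f\|_{\Omega}+\|\bm{\lambda}\|_{\Gamma})$. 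Substituting this into the first inequality of \eqref{eq:estflux} completes the proof.

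I do not expect a genuine obstacle at this stage: all the $\nu$-uniform work is contained in Lemmas~\ref{lemma:ppm}--\ref{lemma:eq:errorflux}. The only points that warrant a line of justification are that the phase-wise decomposition of $\|\nu D(\vv u)\|_{\Omega}^2$ does not reintroduce a hidden $\nu^+/\nu^-$ factor (it does not, since each summand is bounded separately by a $\nu$-independent constant), and that the trace operator $\vv V\to L^2(\Gamma)^d$ is bounded, which holds under the standing Lipschitz assumption on $\Gamma$.
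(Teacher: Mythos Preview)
Your proposal is correct and matches the paper's own argument essentially line for line: the paper also simply collects \eqref{eq:pestimate}, \eqref{eq:errrhom}, and \eqref{eq:errorflux} for the first inequality and then invokes the definition of $\widehat{\vv f}$, the Poincar\'e inequality, and the trace inequality for the second. Your added remarks about Korn's inequality and the absence of hidden $\nu^+/\nu^-$ factors are harmless elaborations of the same reasoning.
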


The second inequality in \eqref{eq:estflux} follows from the definition of the functional $\widehat{\vv f}$ in \eqref{eq:Functional}, the Poincar\'e inequality and the trace inequality, $\|\vv v\|_{\Gamma}\le C \|\vv v\|_{1,\Omega}$.

\section{The finite element method}
\label{sec3}
\subsection{Preliminaries and problem setting}
\label{sec:prelimdisc}
For the discretization purpose, we assume that $\Omega$ is polygonal/polyhedral.
Let $\{\mathcal T_h\}_{h>0}$ be an admissible family of triangulations of $\Omega$. We adopt the convention that the elements $T$ and edges $e$ are open sets, and use over-line symbol to refer to their closure. For each simplex $T\in\mathcal T_h$, let $h_T$ denote its diameter and define the global parameter of the triangulation by $h=\max_{T}h_T$. We assume that $\mathcal T_h$ is shape regular, that is, there exists $\kappa>0$ such that for every $T\in\mathcal T_h$, the radius $\rho_T$ of the inscribed sphere satisfies
\begin{equation}\label{eq:minang}
\rho_T>\frac{h_T}{\kappa}.
\end{equation}
The  {sets} of elements intersecting $\Omega^\pm$ and the set of elements cutting the interface $\Gamma$ are of interest.
These are defined by
\[\mathcal T_h^\pm:=\{T\in\mathcal T_h:T\cap\Omega^\pm\neq\emptyset\},\quad\text{and}\quad\mathcal T_h^\Gamma:=\{T\in\mathcal {T}_h:\overline T\cap \Gamma\neq\emptyset\}.
\]
We also  {define the sets of elements interior to each of subdomains $\Omega^\pm$},   $\mathcal T_{h, i}^\pm=\{ T \in \mathcal T_h^\pm: T \subset \Omega^\pm\}$.  Finally, we let $\mathcal{E}_{h,i}^{\pm}$ be the collection of $d-1$, sub-simplexes of  $\mathcal T_{h, i}^\pm$ {(faces for $d=3$ and edges for $d=2$)}.

For $T\in\mathcal T_h^\Gamma$, we denote $T_\Gamma:=\overline T\cap \Gamma$. Under these definitions we define the $h$-dependent domains
\[
\Omega_h^\pm:=\mathrm{int}\Big(\bigcup_{T\in\mathcal T_h^\pm}\overline T\Big),\quad \text{and}\quad\Omega_{h,i}^\pm:=\mathrm{int}\Big(\bigcup_{T\in\mathcal T_{h,i}^\pm} \overline T\Big).
\]
In particular, using the definition of the sets $\mathcal T_h^+$ and $\mathcal T_{h,i}^-$, we have that $\overline{\Omega}=\overline{\Omega_{h,i}^-}\cup\overline{\Omega_h^+}$. This fact will be useful when constructing a discrete extension operator. We {also consider the layer of elements cut by the interface:}
\[
\omega_h:=\mathrm{int}\Big(\bigcup_{T\in\mathcal T_h^\Gamma}\overline T\Big), %\quad\text{and}~ \Omega_h:=\Omega\backslash \overline{\omega_h},
\]
and define the set of  {faces (edges)} of $\mathcal T_h^\Gamma$ restricted to the interior of $\Omega_h^\pm$:
\[\mathcal E_h^{\Gamma,\pm}:=\{e=\mathrm{int}(\partial T_1\cap\partial T_2):T_1,T_2\in\mathcal T_h^\pm\text{ and }T_1\cap\Gamma\neq\emptyset\text{ or }T_2\cap\Gamma\neq\emptyset\}.\]
For a piecewise smooth vector valued function $\vv v$, the jump across an interior  {face} $e=\mathrm{int}(\partial T_1\cap\partial T_2)$ is defined by
$\jump{\vv v }=\vv v |_{T_1}\cdot \vv n_1+\vv v|_{T_2}\cdot \vv n_2$,
 where $\vv n_1$ and $\vv n_2$ are the unit normal vectors to $e$, pointing outwards to $T_1$ and $T_2$, respectively.  For a scalar function, we define
 $\jump{p }=p |_{T_1}\vv n_1+p|_{T_2} \vv n_2$.

The space of discontinuous and continuous  {finite element} pressures are given by
\[
M_{h,\rm disc}:=\{q \in L^2(\Omega): q|_T\in\mathrm P_{k_p}(T)\quad\forall\,T\in\mathcal T_h\}\quad\text{and}\quad M_{h,\rm cont}:=M_{h,\rm disc}\cap C(\Omega),
\]
 {where integer $k_p\ge0$ is a fixed polynomial degree.}
Throughout this paper, $ {M_h^{\rm bulk}}= M_{h, \rm disc}$ for $k_p \ge 0$ or $ {M_h^{\rm bulk}}= M_{h, \rm cont}$ for  $k_p \ge 1$. We define $M_h^\pm:= {M_h^{\rm bulk}} \cap L^2(\Omega_h^\pm)$.  {Finally,} our pressure space is given by
\[
M_h:=\big\{q:=(q^-,q^+)\in M_h^- \times M_h^+ : (q^-,1)_{\Omega^-}+(q^+,1)_{\Omega^+}=0\big\}.
\]
Note that every element from $\mathcal T_h^\Gamma$ supports two finite element pressures corresponding to different phases.
Only the restriction of these pressures to $\Omega^+$ or $\Omega^+$, respectively, makes sense as a numerical approximation of the true pressure solving the original problem \eqref{eq:contform}. Same comment will be valid for the finite element velocity fields defined next.
We consider  the vector finite element space for $k \ge 1$,
\[
\vv W_h^k=\{ \vv w \in \vv V: \vv w \in \vv P_k(T), \text{ for all } T \in \mathcal{T}_h\}.
\]
Next we consider a background velocity finite element space $ {\vv V_h^{\rm bulk}}$ such that
\[
\vv W_h^{k_u}\subset {\vv V_h^{\rm bulk}} \subset \vv W_h^s,
\]
for some integers $s\ge k_u\ge 1$. Let $\vv V_h^\pm :=  {\vv V_h^{\rm bulk}} \cap \vv H^1(\Omega_h^\pm)$. Finally our velocity space will be
\[
\vv V_h:=\big\{\vv v:=(\vv v ^-,\vv v^+)\in \vv V_h^-\times \vv V_h^+  \big\}.
\]
Also, we denote a generic element $\vv v_h\in \vv V_h$ by $\vv v_h:=(\vv v_h^-,\vv v_h^+)$.

Functions from $M_h$, $\vv V_h$ and their derivatives are multivalued in $\omega_h$, the overlap of $\Omega_h^+$ and $\Omega_h^-$. Below we use the $L^2(\Omega)$ norm notion for such  functions to denote the norm of single-valued functions obtained by restricting $\Omega_h^+$-components on $\Omega^+\subset\Omega_h^+$ and $\Omega_h^-$-components on $\Omega^-\subset\Omega_h^-$. For example,
\[
\|p_h\|^2_\Omega=\|p_h^+\|^2_{\Omega^+}+\|p_h^-\|^2_{\Omega^-}\quad\text{or}\quad \|\nu \,D(\vv u_h)\|^2_{\Omega}=
 \|\nu^+ \,D(\vv u_h^+)\|^2_{\Omega^+}+ \|\nu^- \,D(\vv u_h^-)\|^2_{\Omega^-},
\]
for $p_h\in M_h$, $\vv u_h\in \vv V_h$, and so forth.  The jump of a multivalued function over the interface is defined as the difference of components coming from $\Omega_h^+$ and $\Omega_h^-$, i.e. $\jump{\vv v_h}=\vv v_h^+-\vv v_h^-$ on $\Gamma$.

 {We} consider a discrete norm $\|\cdot\|_{1,h}$, defined on $\vv V_h$, as follows: for all $\vv v_h\in\vv V_h$,
\[
\|\vv v_h\|_{1,h}^2:=\|D(\vv v_h)\|_{\Omega}^2+\sum_{T\in\mathcal T_h^\Gamma}\frac{\|\jump{\vv v_h}\|_{T_\Gamma}^2}{h_T}+\sum_{e\in \mathcal E_{h}^{\Gamma, -}}\sum_{\ell=1}^{s}|e|^{2\ell-1}\|\jump{\partial_{\vv n}^\ell\vv v_h^-}\|_{e}^2 + \sum_{e\in \mathcal E_{h}^{\Gamma, +}}\sum_{\ell=1}^{s}|e|^{2\ell-1}\|\jump{\partial_{\vv n}^\ell\vv v_h^+}\|_{e}^2,
\]
%\end{equation}
where $\partial_{\vv n}^\ell$ is the $\ell-$th order normal derivative.
 We define a scaled norm
\begin{alignat*}{1}
\|\vv v_h\|_{1,h,\nu}^2:=&\|\nu D(\vv v_h)\|_{\Omega}^2+\sum_{T\in\mathcal T_h^\Gamma}\frac{\|\nu^-\jump{\vv v_h}\|_{T_\Gamma}^2}{h_T}\\
&+\sum_{e\in \mathcal E_{h}^{\Gamma, -}}\sum_{\ell=1}^{s}|e|^{2\ell-1}\|\nu^- \jump{\partial_{\vv n}^\ell\vv v_h^-}\|_{e}^2 + \sum_{e\in \mathcal E_{h}^{\Gamma, +}}\sum_{\ell=1}^{s}|e|^{2\ell-1}\|\nu^+ \jump{\partial_{\vv n}^\ell\vv v_h^+}\|_{e}^2,
\end{alignat*}
 {and the augmented scaled norm
\begin{alignat}{1}
\|\vv v_h\|_{1,h,\nu,\star}^2:=&\|\vv v_h\|_{1,h, \nu}^2+ \sum_{T\in\mathcal T_h^\Gamma} h_T \|\nu^- D(\vv v_h^-)\|_{T_\Gamma}^2.\label{augmentednorm}
\end{alignat}
}
We use the notation $\|\cdot\|_{\pm,h}$ to define a discrete norm  on $M_h$, for all $q_h=(q_h^-, q_h^+) \in M_h$, by
\[
\|q_h^\pm\|_{\pm,h}^2:=\|q_h^\pm\|_{\Omega^\pm}^2+\sum_{e\in\mathcal E_h^{\Gamma,\pm}} \sum_{\ell=0}^{k_p}|e|^{2\ell+1}\|\jump{\partial_{\vv n}^\ell q_h^\pm}\|^2_{e},\quad  {\|q_h\|_{h}^2=\|q_h^+\|_{+, h}^2+ \|q_h^-\|_{-, h}^2.}
\]

Spaces ${\vv V_h^*}$ and $M_h^*$ are dual to $\vv V_h$ and $M_h^\pm$ with respect to $\|\cdot\|_{1,h}$ and $\|\cdot\|_{\pm,h}$. For $\vv F_h\in  {\vv V_h^*}$ and $G_h\in M_h^*$, we have by definition that
\[
\|\vv F_h\|_{-1,h}=\sup_{\vv v_h\in \vv V_h}\frac{\vv F_h(\vv v_h)}{\|\vv v_h\|_{1,h}},\quad\text{and}\quad \|G_h\|_{-1,h}=\sup_{q_h\in M_h}\frac{G_h(q_h)}{\|q_h\|_{h}},
\]
where we agree that $\sup_x$ is taken over non-zero elements, if $x$ appears in the denominator.
In this paper, we only consider  bulk spaces $ {\vv V_h^{\rm bulk}},  {M_h^{\rm bulk}}$ which  {form inf-sup stable pairs}.  This is listed as an assumption.

\begin{assumption}\label{Ass0}
 There exists a constant $\xi>0$ such that
\[
\xi \|q\|_{\Omega} \le \sup_{\vv v  \in  {\vv V_h^{\rm bulk}}} \frac{(\div \, \vv v, q)_{\Omega}}{\|\vv v\|_{1, \Omega}} \quad \text{ for all } q \in  {M_h^{\rm bulk}} \cap L_0^2(\Omega).
\]
\end{assumption}

We end this section by considering further assumptions on the mesh and on the pair of spaces $\{\vv V_h,M_h\}$. These assumptions are essentially the ones made in  \cite{guzman-olshanskii}. For a generic set of elements $\mathcal T\subset\mathcal T_h$, denote $\omega(\mathcal T)\subset\mathcal T_h$ the set of all tetrahedra having at least one vertex in $\mathcal T$.

\begin{assumption}\label{Ass1}
For any $T\in \mathcal T_h^\Gamma$ we assume that the  {sets} $W^{ {\pm}}(T)=\mathcal {T}_{h,i}^\pm\cap\omega(\omega(T))$  {are} not empty.
\end{assumption}

We note that this assumption can be weaken by allowing in $W^{\pm}(T)$ neighbors of $T$ of degree $L$, with some finite and mesh independent $L\ge 2$.

Given $T\in\mathcal T_h^\Gamma$, we associate arbitrary but fixed $K_T^{ {\pm}}\in W^{ {\pm}}(T)$, which can be reached from $T$ by crossing faces in  {$\mathcal E_h^{\Gamma,\pm}$}. More precisely, there exists simplexes $T=K_1^{ {\pm}},K_2^{ {\pm}},\ldots,K_M^{ {\pm}}=K_T^{ {\pm}}$ with $K_j^{ {\pm}}\in\mathcal T_h^\Gamma$ for $j<M$. The number $M$ is uniformly bounded and only depends on the shape regularity of the mesh. Moreover, note that by \eqref{eq:minang} there exists a constant $c$ only depending on the shape regularity constant $\kappa$ such that
\[
\frac{h_T}{c}\le h_{K_T^\pm}\le ch_T.
\]
For $T\in\mathcal  {T}_{h,i}^\pm$, we set $K_T^{ {\pm}}=T$.

\begin{assumption}\label{Ass2}
Let $F\in\mathcal E_h^{\Gamma, {\pm}}$, with $F=\partial T_1\cap \partial T_2$. Assume $K_{T_2}^{\pm}$ can be reached from $K_{T_1}^{\pm}$ by crossing a finite, independent of $h$, number of faces of tetrahedra from $\mathcal  {T}_{h,i}^\pm$.
\end{assumption}

The following assumption is also a type of inf-sup condition but restricted to  {interior elements in two phases, i.e. those lying inside $\Omega_{h,i}^\pm$}.
\medskip

\begin{assumption}\label{Ass3} There exists a constant $\beta >0$ such that
\[
\beta |q^\pm|_{H_{h,i}^{1,\pm}}  \le \sup_{\vv v \in  {\vv V_h^{\rm bulk}} \cap \vv H_0^1(\Omega_{h, i}^\pm)} \frac{(\div \, \vv v, q^\pm)_{\Omega_{h,i}^\pm}}{\|\vv v\|_{1, \Omega_{h, i}^\pm}} \quad \text{ for all } q \in  {M_h^{\rm bulk}},
\]
where
\begin{equation*}
|q^\pm|_{H_{h,i}^{1,\pm}}^2= \sum_{T \in \mathcal{T}_{h, i}^\pm} h_T^2 \|\nabla q^\pm\|_{L^2(T)}^2+ \sum_{e \in \mathcal{E}_{h, i}^\pm} h_e \| \jump{q^\pm}\|_{e}^2.
\end{equation*}
\end{assumption}

Examples of pair of spaces $ {\vv V_h^{\rm bulk}}$--$ {M_h^{\rm bulk}}$ that satisfy the Assumption~\ref{Ass3}  can be found in \cite[Section 6]{guzman-olshanskii};  {they include $P_{k+1}-P_k$, $k\ge 1$, and   $P_{k+d}-P_{k}^{\rm disc}$ for $k\ge0$, $\Omega\subset\mathbb{R}^d$, $d=2,3$, and several other elements.} In particular, if a pair $ {\vv V_h^{\rm bulk}}$--$ {M_h^{\rm bulk}}$ satisfies these assumptions, we have the following result.
\begin{theorem}\label{thm:infsupgo}
Suppose Assumptions \ref{Ass1}--\ref{Ass3} hold. There exists a constant $\theta>0$, independent of $h$ and $q$, and a constant $h_0>0$ such that for all $q\in L_0^2(\Omega_{h,i}^\pm)\cap M_h^{\rm bulk}$ and $h\le h_0$, we have
 \[
 \theta\|q\|_{\Omega_{h,i}^\pm}\le \sup_{\vv v\in \vv V_{h}^\Gamma \cap \vv H_0^1(\Omega_{h,i}^\pm)}\frac{(\div\,\vv v,q)_{\Omega_{h,i}^\pm}}{\|\vv v\|_{1,\Omega_{h,i}^\pm}}.
 \]
\end{theorem}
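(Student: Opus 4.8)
The statement to prove, Theorem~\ref{thm:infsupgo}, asserts an inf-sup (LBB) condition on the interior subdomains $\Omega_{h,i}^\pm$ for the pair $\vv V_h^\Gamma\cap\vv H_0^1(\Omega_{h,i}^\pm)$ and $M_h^{\rm bulk}$, where $\vv V_h^\Gamma$ denotes the bulk velocity space (possibly with the ghost-penalty-compatible structure of $\vv V_h$) restricted so that the velocity test functions vanish on $\partial\Omega_{h,i}^\pm$. The natural approach is the classical Fortin/Verf\"urth-type ``two-level'' argument: decompose a given $q\in L_0^2(\Omega_{h,i}^\pm)\cap M_h^{\rm bulk}$ into its global-average-free part seen by a coarse-scale stable velocity and a local oscillatory part controlled by the weak (scaled $H^1$) inf-sup condition of Assumption~\ref{Ass3}. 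Concretely, I would first invoke Assumption~\ref{Ass0} — or rather the continuous inf-sup on $\Omega_{h,i}^\pm$, which follows since $\Omega_{h,i}^\pm$ is a polygonal Lipschitz subdomain and the pair is bulk-stable — to produce $\vv v_1\in\vv V_h^{\rm bulk}\cap\vv H_0^1(\Omega_{h,i}^\pm)$ with $(\div\,\vv v_1,q)_{\Omega_{h,i}^\pm}\ge c_1\|q\|_{\Omega_{h,i}^\pm}^2$ and $\|\vv v_1\|_{1,\Omega_{h,i}^\pm}\le C\|q\|_{\Omega_{h,i}^\pm}$. This is the ``macroscopic'' piece. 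The difficulty is that $\vv V_h^\Gamma$ may be a proper subspace (or the test space is constrained near $\Gamma$), so $\vv v_1$ must be corrected.

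\textbf{Key steps.} The plan is: (1) Obtain the macroscopic velocity $\vv v_1$ as above. (2) Since $\vv v_1$ need not lie in the admissible space $\vv V_h^\Gamma\cap\vv H_0^1(\Omega_{h,i}^\pm)$, apply a Fortin-type quasi-interpolation operator $\Pi_h$ (a Scott–Zhang or Cl\'ement variant adapted to the interior mesh, as constructed in \cite{guzman-olshanskii}) that maps into $\vv V_h^\Gamma\cap\vv H_0^1(\Omega_{h,i}^\pm)$, is $H^1$-stable, and satisfies the commuting-type estimate $(\div(\vv v_1-\Pi_h\vv v_1),q_h)_{\Omega_{h,i}^\pm}$ bounded by the weak seminorm $|q|_{H_{h,i}^{1,\pm}}$ of $q$ times $\|\vv v_1\|_{1,\Omega_{h,i}^\pm}$ — this is exactly the mechanism that couples the two inf-sup conditions. (3) Use Assumption~\ref{Ass3} to find $\vv v_2\in\vv V_h^{\rm bulk}\cap\vv H_0^1(\Omega_{h,i}^\pm)$ with $(\div\,\vv v_2,q)_{\Omega_{h,i}^\pm}\ge\beta|q|_{H_{h,i}^{1,\pm}}^2$ and $\|\vv v_2\|_{1,\Omega_{h,i}^\pm}\le|q|_{H_{h,i}^{1,\pm}}$; if $\vv v_2\notin\vv V_h^\Gamma$, again apply $\Pi_h$, noting that $\Pi_h$ acting on functions already in the interior finite element space changes them only near the skin $\mathcal E_h^{\Gamma,\pm}$, and the resulting defect is a higher-order term absorbable for $h\le h_0$. (4) Set $\vv v=\Pi_h\vv v_1+\delta\,\Pi_h\vv v_2$ with a small fixed $\delta>0$, and combine: the $\delta\,\Pi_h\vv v_2$ term supplies a positive multiple of $|q|_{H_{h,i}^{1,\pm}}^2$ that dominates the error term from step (2), while the $\Pi_h\vv v_1$ term gives $c_1\|q\|^2$ minus controllable remainders. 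Choosing $\delta$ small enough (depending on $c_1,\beta,$ and the interpolation constants, all $h$-independent) yields $(\div\,\vv v,q)_{\Omega_{h,i}^\pm}\ge\theta\|q\|_{\Omega_{h,i}^\pm}^2$ and $\|\vv v\|_{1,\Omega_{h,i}^\pm}\le C\|q\|_{\Omega_{h,i}^\pm}$, which is the claimed bound. Assumptions~\ref{Ass1} and~\ref{Ass2} enter to guarantee that the reaching-chains defining $K_T^\pm$ exist and have uniformly bounded length, which is what makes the Fortin operator and the weak-seminorm estimates $h$-uniform.

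\textbf{Main obstacle.} The crux is step (2)–(3): constructing the Fortin operator $\Pi_h$ into the constrained space $\vv V_h^\Gamma\cap\vv H_0^1(\Omega_{h,i}^\pm)$ with the divergence-testing estimate against the weak seminorm $|\cdot|_{H_{h,i}^{1,\pm}}$, while preserving $\vv H_0^1$ boundary conditions on $\partial\Omega_{h,i}^\pm$ (which is a ``staircase'' boundary one layer of elements away from $\Gamma$). This is precisely where one must quote the machinery of \cite{guzman-olshanskii}; I would not redo it but rather verify that its hypotheses are met via Assumptions~\ref{Ass1}--\ref{Ass2} and that the smallness threshold $h_0$ in those lemmas transfers. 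A secondary technical point is controlling how the element-to-element reaching chains interact with the discontinuous pressure jumps across $\mathcal E_{h,i}^\pm$ in the definition of $|q^\pm|_{H_{h,i}^{1,\pm}}$, handled by summing the local estimates over chains of uniformly bounded length and using shape regularity \eqref{eq:minang} to equate $h_T$ with $h_{K_T^\pm}$ up to constants. Once these are in place, the final assembly with the free parameter $\delta$ is routine.
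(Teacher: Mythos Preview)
The paper does not prove this theorem at all: its entire proof is the single line ``See \cite[Theorem~1]{guzman-olshanskii}.'' The result is imported wholesale from that reference, so there is no in-paper argument to compare your sketch against.

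Your outline is a reasonable reconstruction of the Verf\"urth-type argument that underlies such results (continuous inf-sup on $\Omega_{h,i}^\pm$ for the macroscopic part, then Assumption~\ref{Ass3} to control the interpolation defect measured in the weak seminorm $|\cdot|_{H_{h,i}^{1,\pm}}$, combined with a free parameter $\delta$), and you correctly identify that the Fortin operator construction and the chain estimates using Assumptions~\ref{Ass1}--\ref{Ass2} are precisely the machinery developed in \cite{guzman-olshanskii}. One minor point: you hedge about what $\vv V_h^\Gamma$ means; in the paper this symbol is in fact never defined and is almost certainly a typo for $\vv V_h^{\rm bulk}$ (compare the sup in Assumption~\ref{Ass3} and the way the theorem is invoked in \eqref{eq:infsup}), so you need not worry about an additional constraint on the test space beyond $\vv H_0^1(\Omega_{h,i}^\pm)$.
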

\begin{proof}
	See \cite[Theorem 1]{guzman-olshanskii}.
\end{proof}

\smallskip

We will also need trace and inverse inequalities which can be found, for example, in \cite{guzman-olshanskii}.
\begin{lemma}\label{traceinverse}
Let $T \in \mathcal{T}_h$, then it holds
 \begin{alignat}{2}
 \|v\|_{L^2(\partial T)} & \le C (h_T^{-1/2}\|v\|_{T}+h_T^{1/2} \|\nabla v\|_{T}^2), \quad && \text{ for all } v \in H^1(T),  \label{trace1} \\
\|v\|_{L^2(T \cap \Gamma)} & \le C (h_T^{-1/2}\|v\|_{T}+h_T^{1/2} \|\nabla v\|_{T}^2), \quad && \text{ for all } v \in H^1(T), \label{trace2} \\
 \|v\|_{L^2(\partial T)} & \le C h_T^{-1/2}\|v\|_{T} \quad && \text{ for all } v \in P_k(T),  \label{inverse1} \\
 \|v\|_{L^2(T \cap \Gamma)} & \le C h_T^{-1/2}\|v\|_{T} \quad && \text{ for all } v \in P_k(T). \label{inverse2}
 \end{alignat}
with a constant $C$ independent of $T$ and on how $\Gamma$ intersects $T$.
\end{lemma}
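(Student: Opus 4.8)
The plan is to pull everything back to the reference simplex $\hat T$ by the affine map $F_T\colon\hat T\to T$ and to split the four estimates into a ``flat'' group, \eqref{trace1} and \eqref{inverse1}, and a ``curved'' group, \eqref{trace2} and \eqref{inverse2}. For \eqref{trace1}, I would write $v=\hat v\circ F_T^{-1}$ with $\hat v\in H^1(\hat T)$, apply the classical trace theorem $\|\hat v\|_{L^2(\partial\hat T)}\le C\|\hat v\|_{H^1(\hat T)}$ on the fixed domain $\hat T$, and then substitute the scaling relations, valid with constants depending only on the shape-regularity constant $\kappa$ of \eqref{eq:minang}: $\|v\|_{T}^2\simeq h_T^{d}\|\hat v\|_{\hat T}^2$, $\|\nabla v\|_{T}^2\simeq h_T^{d-2}\|\nabla\hat v\|_{\hat T}^2$, and $\|v\|_{L^2(\partial T)}^2\simeq h_T^{d-1}\|\hat v\|_{L^2(\partial\hat T)}^2$; collecting powers of $h_T$ gives \eqref{trace1}. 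Estimate \eqref{inverse1} is the same argument with the trace theorem replaced by the equivalence of all norms on the finite-dimensional space $P_k(\hat T)$, which gives $\|\hat v\|_{L^2(\partial\hat T)}\le C\|\hat v\|_{\hat T}$ with $C=C(k,d)$; scaling back then saves one power $h_T^{1/2}$ and drops the gradient term.

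The curved estimates \eqref{trace2} and \eqref{inverse2} are the only nontrivial point, since $\Gamma\cap T$ is a curved set whose position inside $T$ is uncontrolled, so a direct pull-back to $\hat T$ would not keep the constant uniform. Instead I would use the smoothness of $\Gamma$: for $h$ below a threshold depending only on $\Gamma$, and for each $T\in\mathcal T_h^\Gamma$, there is a rotated frame in which $\Gamma$ near $T$ is the graph $x_d=\phi(x')$ of a function with $\|\nabla\phi\|_{L^\infty}\le\tfrac12$. One then selects a set of base points $x'$ of $(d-1)$-dimensional measure of order $h_T^{d-1}$ and a fixed transversal unit direction $\vv{e}$ such that the segments $\{\,\vv{y}+s\vv{e}:s\in[0,\ell]\,\}$, with $\vv{y}=(x',\phi(x'))\in\Gamma\cap T$ and $\ell$ of order $h_T$, stay inside $T$; writing $v(\vv{y})^2=v(\vv{y}+\ell\vv{e})^2-\int_0^\ell\vv{e}\cdot\nabla(v^2)(\vv{y}+s\vv{e})\,ds$, integrating over $x'$, and using Cauchy--Schwarz together with the bounded Jacobian of the graph parametrization yields $\|v\|_{L^2(\Gamma\cap T)}^2\le C\big(h_T^{-1}\|v\|_{T}^2+h_T\|\nabla v\|_{T}^2\big)$, i.e. \eqref{trace2}. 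Estimate \eqref{inverse2} then follows by combining \eqref{trace2} with the standard interior inverse inequality $\|\nabla v\|_{T}\le Ch_T^{-1}\|v\|_{T}$ valid for $v\in P_k(T)$. Alternatively one may simply invoke the now-standard trace bound for cut elements, as done in \cite{guzman-olshanskii}.

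The main obstacle is exactly the claimed uniformity ``independent of how $\Gamma$ intersects $T$'': a naive argument breaks down when $\Gamma$ shaves off an arbitrarily thin sliver of $T$, and the remedy is the geometric construction above --- the transversal segments must stay inside $T$, have length of order $h_T$, and be based over a set of $(d-1)$-dimensional measure of order $h_T^{d-1}$, which is possible precisely because for small $h$ the cut $\Gamma\cap T$ is a mild perturbation of a planar section of a shape-regular simplex. Everything else --- the scalings, Cauchy--Schwarz, and the norm equivalences on $\hat T$ --- is routine.
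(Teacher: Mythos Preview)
The paper does not prove this lemma: immediately before the statement it says that these ``trace and inverse inequalities \ldots can be found, for example, in \cite{guzman-olshanskii}'' and then simply states the result. Your closing remark---that one may just invoke the now-standard cut-element trace bound from \cite{guzman-olshanskii}---is therefore exactly the paper's own ``proof''.

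Your sketch goes further and outlines the standard argument, which is correct. The flat inequalities \eqref{trace1} and \eqref{inverse1} by affine pull-back and scaling are routine, and deducing \eqref{inverse2} from \eqref{trace2} via the polynomial inverse estimate $\|\nabla v\|_T\le Ch_T^{-1}\|v\|_T$ is clean. One small point on \eqref{trace2}: from the identity $v(\vv y)^2=v(\vv y+\ell\vv e)^2-\int_0^\ell \vv e\cdot\nabla(v^2)\,ds$ with a \emph{fixed} $\ell$ and integration over the base $x'$ only, the endpoint term still yields a surface integral rather than $h_T^{-1}\|v\|_T^2$. The usual fix is to average the identity over $\ell\in(0,c\,h_T)$ (equivalently, integrate over the full tube of segments), which converts the endpoint contribution into a volume integral over a subset of $T$. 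With that averaging step inserted, and with your correct observation that for shape-regular $T$ and small $h$ one can choose the transversal direction so that the segments stay in $T$ with length comparable to $h_T$, the argument goes through uniformly in the position of $\Gamma$.
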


\subsection{Discrete Extension Operator}
\label{sec:discextop}
As in the continuous setting, a chief tool will be an extension operator. In this section we provide a discrete analogue of the extension operator in \eqref{eq:extprop}.  Widlund \cite{widlund1987extension} provided a discrete extension operator when the mesh fits the interface. For non-fitted meshes, as is the case here, a discrete extension operator can be found in \cite{bgss-2017} for piecewise-linear finite element functions and smooth interface.

 In Lemma~\ref{LemmaExt} below, we prove the result for unfitted meshes   and finite elements of \emph{arbitrary degree}
that admit the existence of a  \emph{local nodal basis}. Let us make this assumption precise.
The velocity bulk space is the space of vector functions, i.e. $\mathbf{V}_h^{\rm bulk}=\otimes_{j=1}^d V_{j,h}$. For each component space $V_{j,h}$  we assume that (i) there is a set of points (nodes) $\mathcal{N}(\mathcal{T}_h)=\{ \vv y_1, \ldots, \vv y_\ell\}$ such that $v\in V_{j,h}$ is uniquely determined by $v(\vv y_i)$ for $1 \le i \le \ell$; (ii) for each $T$  there exists a local subset $\mathcal{N}(T)=\{ \vv y \in \mathcal{N}(\mathcal{T}_h)\,:\,\vv y \in \overline{T} \}$ such that $v|_T$ is uniquely  determined by the values there; and (iii) if $\Phi: T\to \widehat{T}$ is the affine mapping to the reference simplex, then  $\Phi(\mathcal{N}(T))$ is independent of $T\in\mathcal{T}_h$.  The existence of the {local nodal basis} is, of course, standard if $\mathbf{V}_h^{\rm bulk}=\vv W_h^k$ for some integer $k\ge 1$. Moreover, we assume only  \emph{Lipschitz regularity} of the interface.

\begin{lemma}[Finite element extension]\label{LemmaExt} Assume $\Gamma$ is \emph{Lipschitz},  the meshes $\{\mathcal{T}_h\}$ are shape regular and satisfy Assumptions \ref{Ass1},  and assume $\mathbf{V}_h^{\rm bulk}$ has a {local nodal basis}. There exists an  extension operator $E_h:\vv V_h^+\to \vv V_h^{\rm bulk}$ with the following properties:
\begin{itemize}
	\item[a)] $E_h\vv v_h=\vv v_h$  on $\Omega_h^+$,
	\item[b)] There exists $C>0$, independent of $h$  and position of $\Gamma$ against the underlying mesh, such that for all $\vv v_h \in \vv V_h^+$,
	\[
	\|E_h\vv v_h\|_{1,\Omega}\le C\|\vv v_h\|_{1,\Omega_h^+}.
	\]
\end{itemize}
\end{lemma}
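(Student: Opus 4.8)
The goal is to build a bounded extension operator $E_h:\vv V_h^+\to\vv V_h^{\rm bulk}$ at the discrete level, mimicking the Stein extension \eqref{eq:extprop} but respecting the finite element structure. The plan is to define $E_h\vv v_h$ by prescribing its nodal values: on every node $\vv y\in\mathcal N(\mathcal T_h)$ lying in $\overline{\Omega_h^+}$ we set $(E_h\vv v_h)(\vv y)=\vv v_h(\vv y)$, which immediately guarantees property (a) since $\vv v_h\in\vv V_h^+$ is already defined there. The content is in assigning values at the remaining nodes, i.e. those in $\overline{\Omega_{h,i}^-}\setminus\overline{\Omega_h^+}$. For such a node $\vv y$, let $T_{\vv y}$ be a simplex containing it; by Assumption~\ref{Ass1} (together with the chain of neighbors constructed after it) there is a simplex $K=K_{T_{\vv y}}^+\in W^+(T_{\vv y})\subset\mathcal T_{h,i}^+$ reachable from $T_{\vv y}$ by crossing a uniformly bounded number $M$ of elements, all of diameter comparable to $h_{T_{\vv y}}$. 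On $K$, $\vv v_h$ is a genuine polynomial $\vv v_h|_K\in\vv P_s(K)$; define $(E_h\vv v_h)(\vv y):=(\vv v_h|_K)(\vv y)$, i.e. evaluate the polynomial representation of $\vv v_h$ from the nearby interior cell $K$ at the external node $\vv y$. The local nodal basis assumption (iii) — reference-invariance of $\Phi(\mathcal N(T))$ — is what makes this well-defined and lets us transfer estimates to the reference element.

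The stability estimate $\|E_h\vv v_h\|_{1,\Omega}\le C\|\vv v_h\|_{1,\Omega_h^+}$ is proved element by element. For a simplex $T$ that lies in $\Omega_h^+$ there is nothing to do. For a simplex $T$ not contained in $\Omega_h^+$, map to the reference simplex $\widehat T$; then $\|D(E_h\vv v_h)\|_T^2$ and $h_T^{-2}\|E_h\vv v_h\|_T^2$ are controlled, up to shape-regularity constants, by the squares of the nodal values $|(E_h\vv v_h)(\vv y)|$ over $\vv y\in\mathcal N(T)$ (norm equivalence on the finite-dimensional reference space). Each such nodal value equals $(\vv v_h|_{K})(\vv y)$ for some interior simplex $K$ reachable from $T$ through a bounded chain $T=K_1,\dots,K_M=K$. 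I then bound $|(\vv v_h|_{K})(\vv y)|$ by $\|\vv v_h\|_{L^\infty(K)}$, and using that a polynomial of fixed degree evaluated at a point of $\widehat T$ (the image of $\vv y$, which lies at a bounded reference-distance outside $\widehat K$) is controlled by its $L^2(K)$ norm after scaling: $\|\vv v_h\|_{L^\infty(K)}\le C h_K^{-d/2}\|\vv v_h\|_{K}$. Here one must track the polynomial extended off $\widehat K$; since $\mathcal N(T)$ maps to a fixed finite set in reference coordinates and the chain length is bounded, evaluation of the degree-$s$ polynomial at these points stays in a fixed bounded region, so the norm-equivalence constant is uniform.

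The remaining issue is that $h_K^{-d/2}\|\vv v_h\|_K$ is an $H^1$-unstable bound: we have gained a full negative power of $h$. This is where the jump terms in the norm $\|\cdot\|_{1,h}$ enter and where I expect the main difficulty. The standard device (this is precisely the mechanism used for the piecewise-linear case in \cite{bgss-2017} and for the inf-sup arguments in \cite{guzman-olshanskii}) is to telescope along the chain $K_1,\dots,K_M$: write $\vv v_h|_{K_1}-\vv v_h|_{K_M}$ as a sum of jumps of $\vv v_h$ and its normal derivatives across the faces $e_j=\partial K_j\cap\partial K_{j+1}\in\mathcal E_h^{\Gamma,+}$, using a Taylor/polynomial expansion across each shared face — the difference of the two polynomial pieces is a polynomial determined by the jumps $\jump{\partial_{\vv n}^\ell\vv v_h^+}_{e_j}$, $\ell=0,\dots,s$, of which there are finitely many. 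Thus
\[
\|D(E_h\vv v_h)\|_T^2 \le C\Big(\|D(\vv v_h)\|_{K}^2 + \sum_{j}\sum_{\ell=1}^{s}|e_j|^{2\ell-1}\|\jump{\partial_{\vv n}^\ell \vv v_h^+}\|_{e_j}^2\Big),
\]
and similarly the zeroth-order term $h_T^{-2}\|E_h\vv v_h\|_T^2$ is absorbed because $\vv v_h|_K\in\vv H^1(K)$ already controls $h_K^{-2}\|\vv v_h\|_K^2$ up to lower order plus the same jump terms — more carefully, one uses $\|E_h\vv v_h\|_T\le C(\|\vv v_h\|_K + \text{jump contributions})$ after scaling, so no negative power of $h$ survives. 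Finally, summing over all $T\notin\Omega_h^+$: since each interior simplex $K$ and each face $e\in\mathcal E_h^{\Gamma,+}$ is used as the "source" for only a bounded number of target simplices $T$ (finite overlap, again by shape regularity and bounded chain length), the sum on the right collapses to $C\big(\|D(\vv v_h)\|_{\Omega_h^+}^2+\sum_{e\in\mathcal E_h^{\Gamma,+}}\sum_{\ell=1}^{s}|e|^{2\ell-1}\|\jump{\partial_{\vv n}^\ell\vv v_h^+}\|_e^2\big)\le C\|\vv v_h\|_{1,\Omega_h^+}^2$, where the last step uses that $\mathcal E_h^{\Gamma,+}$-jump terms are part of the definition of $\|\cdot\|_{1,h}$ restricted to $\Omega_h^+$. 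Care must be taken that the chains stay inside $\mathcal T_h^+$ (so all crossed faces lie in $\mathcal E_h^{\Gamma,+}$, keeping us within the available norm) and that the $\Omega_h^+$-component $\vv v_h^+$ is single-valued on $\Omega_h^+$ so the expansion is legitimate; these are exactly what Assumptions~\ref{Ass1} and the chain construction guarantee.
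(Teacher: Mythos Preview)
Your construction has a genuine gap: Assumption~\ref{Ass1} guarantees that $W^+(T)=\mathcal T_{h,i}^+\cap\omega(\omega(T))$ is nonempty only for $T\in\mathcal T_h^\Gamma$. You invoke it for \emph{every} simplex $T_{\vv y}$ containing a node $\vv y\in\overline{\Omega_{h,i}^-}\setminus\overline{\Omega_h^+}$, but such simplices can lie arbitrarily far from $\Gamma$ inside $\Omega^-$, and there is no ``nearby'' $K_{T_{\vv y}}^+\in\mathcal T_{h,i}^+$ reachable by a chain of uniformly bounded length $M$. Consequently your operator is not even well-defined at those nodes, and even if you chose some distant $K$, the chain length would grow like $h^{-1}$, destroying the constant in your telescoping bound. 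The same objection applies to your claim that the chains ``stay inside $\mathcal T_h^+$'': they cannot, since $T_{\vv y}\in\mathcal T_{h,i}^-$ is disjoint from $\mathcal T_h^+$, so $\vv v_h$ is not defined on the first element of the chain and there are no $\mathcal E_h^{\Gamma,+}$-jumps to telescope against.

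The paper resolves this by separating the problem into a \emph{global} part and a \emph{local} part. First, for piecewise linears one composes the continuous Stein extension $E$ (which handles the region far from $\Gamma$) with a Scott--Zhang interpolant onto $V_h^1$, adjusted so that nodal values on $\overline{\Omega_h^+}$ are preserved; Assumption~\ref{Ass1} and the chain argument are used only for the $O(h)$-thick layer $\mathcal T_{h,i}^{-,1}$ abutting $\Omega_h^+$. Second, for a general $v\in V_h^+$ one writes $v=I_h v+(v-I_h v)$ with $I_h$ the $P_1$-Lagrange interpolant; the linear part $I_h v$ is extended by the first step, while $v-I_h v$ vanishes at all vertices and can therefore be extended by zero at exterior nodes with $H^1$-stability (the only nontrivial contribution is again in the single layer adjacent to $\partial\Omega_h^+$). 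Your polynomial-evaluation idea is essentially what the paper's local argument does, but only near the interface; the Stein/Scott--Zhang ingredient is what you are missing to cover the bulk of $\Omega_{h,i}^-$.

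A minor point: the target bound is the ordinary $H^1(\Omega_h^+)$ norm, not $\|\cdot\|_{1,h}$. Your final estimate ends with the ghost-penalty jump terms; these can indeed be absorbed into $\|\nabla\vv v_h\|_{\Omega_h^+}$ by trace and inverse inequalities, but you should state that explicitly rather than appeal to the $\|\cdot\|_{1,h}$ norm.
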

\begin{proof}
  The proof  is given in the Appendix.
\end{proof}

%%%%%%%%%%%%%%%%%%
\subsection{The discrete variational formulation}
In this section, we define the discrete counterpart of \eqref{eq:mixedform}. The jumps over the interface are enforced weakly, and a term is added to enforce the symmetry of the bilinear form $a$. A discrete variational analogue of \eqref{eq:mixedform} is given by the problem of finding $(\vv u_h,p_h)\in \vv V_h\times M_h$ such that
\begin{equation}\label{eq:mixeddiscform}\begin{array}{cccccc}a_h(\vv u_h,\vv v_h) & + & b_h(\vv v_h,p_h) & = & \vv F_h(\vv v_h) & \forall\,\vv v_h\in \vv V_h,\\   b_h(\vv u_h,q_h) & - & J_h(p_h,q_h)  & = & G_h(q_h) & \forall\,q_h\in M_h, \end{array}
\end{equation}
 {with bilinear forms given below}.
 {For all} $\vv u_h=(\vv u_h^-,\vv u_h^+),$ $\vv v_h=(\vv v_h^-,\vv v_h^+)\in \vv V_h$ and $p_h=(p_h^-,p_h^+)$, $q_h=(q_h^-,q_h^+)\in M_h$,  {we define}
\begin{alignat*}{1}
a_h(\vv u_h,\vv v_h)&:=\big(\nu^-D(\vv u_h^-), D(\vv v_h^-)\big)_{\Omega^-}+\big(\nu^+D(\vv u_h^+),D(\vv v_h^+)\big)_{\Omega^+}+\sum_{T\in\mathcal T_h^\Gamma}\frac{\gamma}{h_T}\nu^-\big(\jump{\vv u_h}, \jump{\vv v_h} \big)_{T_\Gamma}\\
&\qquad -(\nu^-D(\vv u_h^-)\vv n, \jump{\vv v_h})_{\Gamma}-(\nu^-D(\vv v_h^-)\vv n, \jump{\vv u_h})_{\Gamma}+\vv J_h(\vv u_h,\vv v_h),
\end{alignat*}
 {
with
\[
\vv J_h(\vv u_h,\vv v_h)=\vv J_h^{-}(\vv u_h,\vv v_h)+ \vv J_h^{+}(\vv u_h,\vv v_h),\quad
\vv J_h^{\pm}(\vv u_h,\vv v_h)=
\sum_{\ell=1}^{s}|e|^{2\ell -1} \sum_{e\in\mathcal E_h^{\Gamma,\pm}} \gamma^\pm_{\vv u}\nu^\pm(\jump{\partial_n^\ell \vv u_h^\pm}, \jump{\partial_n^\ell \vv v_h^\pm})_e,
\]
}
\[
b_h(\vv v_h,q_h):=-\big(q_h^-,\div\,\vv v_h^-)_{\Omega^-}- \big( q_h^+,\div\,\vv v_h^+\big)_{\Omega^+}+\big( q_h^-,\jump{\vv v_h}\cdot \vv n)_{\Gamma},
\]
and
\[
 J_h(p_h,q_h)=J_h^-( {p_h,q_h})+J_h^+( {p_h,q_h}),\quad
J_h^\pm( {p_h,q_h}):=\frac{ {\gamma_p^\pm}}{\nu^\pm}\sum_{e\in\mathcal E_h^{\Gamma,\pm}}\sum_{\ell = 0}^{k_p}|e|^{2\ell+1}(\jump{\partial_n^\ell p_h^\pm},\jump{\partial_n^\ell q_h^\pm})_e.
\]
Stabilization parameters $\gamma^\pm_p$, $\gamma^\pm_{\vv u}$ and $\gamma$ are all assumed to be independent of $\nu$, $h$ and position of $\Gamma$ against the underlying mesh. Parameter $\gamma$  needs to be large enough to provide the bilinear form $a(\cdot,\cdot)$ with coercivity.  For the purpose of analysis, we set $\gamma^\pm_p=\gamma^\pm_{\vv u}=1$. In practice, these parameters can be tuned for better numerical performance (see section~\ref{sec5} for numerical examples) and the analysis below remains valid if all $\gamma^\pm_p$ and $\gamma^\pm_{\vv u}$ are $O(1)$ parameters.

The right-hand side $\vv F_h\in \vv V_h^*$ will be defined later on, and we assume $G_h\in M_h^*$ is given by
\[
G_h(q_h):=G_h^-(q_h)+G_h^+(q_h),\quad\text{such that}\quad G_h^\pm(q_h)=G_h^\pm(q_h^\pm).
\]
It is straightforward to check that the norm of linear bounded functions $G_h^\pm$ can be expressed in terms of $M_h^\pm$ spaces. More precisely, it holds
\[
\|G_h^\pm\|_{-1,h}=\sup_{q_h^\pm\in M_h^\pm}|G_h^\pm(q_h^\pm)|/\|q_h^\pm\|_{\pm,h}.
\]
Also, we assume $G_h$ satisfies $G_h(1)=0$.  {In particular, this implies that if the second equation of \eqref{eq:mixeddiscform} is satisfied by $q_h\in M_h$, then it is also satisfied by $q_h+c$ for any constant function $c$.}

\subsection{Well-posedness of the discrete scheme}
 We are now interested in a finite element counterpart of the a priori estimate \eqref{eq:contaim}. More precisely, for the solution $(\vv u_h,p_h)$ of  \eqref{eq:mixeddiscform}, we shall prove the following stability result:
\begin{equation}
\label{eq:discaim}\|\nu D(\vv u_h)\|_{\Omega}+\|p_h\|_{\Omega}\le C\big(\|\vv F_h\|_{-1,h}+\nu^-\|G_h^-\|_{-1,h}+\nu^+\|G_h^+\|_{-1,h}\big)
\end{equation}
with  a constant $C>0$ independent of $\nu^{\pm}$, $h$ and the position of $\Gamma$ in the bulk mesh. The proof will largely follow the main steps made in section~\ref{sec2}. We first prove specific estimates for discrete pressure in $\Omega$ and subdomains.  The continuity of the bilinear forms $a_h$ and $b_h$, and the coercivity of $a_h$ in $\vv V_h$ will help us with energy estimates, which due to $\nu^-\le \nu^+$ yield the desired control of  $\nu D(\vv u_h)$, pressure and stabilization terms in $\Omega^-$.
To extend these estimates to $\Omega^+$, a crucial result about extension of finite element functions is stated (with its proof moved to the Appendix section). The result is then applied to gain control of finite element viscous stresses and pressure in  $\Omega^+$. Define the  {natural energy norm for} $a_h$ by
\begin{equation}\label{eq:defnormVh}
\|\vv v_h\|_{\vv V_h}^2:=\|\nu^{\frac12} \,D(\vv v_h)\|^2_{\Omega}+\sum_{T\in\mathcal T_h^\Gamma}\frac{\|\sqrt{\nu^-}\,\jump{\vv v_h}\|_{T_\Gamma}^2}{h_T}+ {\vv J_h(\vv v_h,\vv v_h)},\quad  \text{for all}~\vv v_h=(\vv v_h^-,\vv v_h^+)\in \vv V_h.
\end{equation}
We will need the following technical lemma which is essentially found in \cite[Lemma 5.1]{larson-stabfict}.  {The main difference is the use of Korn's inequality and projections onto the rigid body motions instead of the Poincar\'e inequality and projection onto the constants.}
\begin{lemma}
\label{lem:overlap}
There exists $C>0$, independent of $h$ and $\nu^\pm$, such that for every $q_h\in M_h$, it holds

\begin{equation}\label{eq:overlapph}
\|q_h^\pm\|_{\Omega_h^\pm}^2\le C\bigg(\|q_h^\pm\|_{ {\Omega^\pm_{h,i}}}^2+
 {\nu^-J_h^-(q_h^-,q_h^-)+\nu^-J_h^+(q_h^+,q_h^+)\bigg),}
\end{equation}
and for all $\vv v_h\in \vv V_h$,
\begin{equation}\label{eq:overlapuh}
\|D(\vv v_h^\pm)\|_{\Omega_h^\pm}^2\le C\bigg(\|D(\vv v_h^\pm)\|_{ {\Omega^\pm_{h,i}}}^2+ {\frac1{\nu^-}\vv J_h^-(\vv v_h^-,\vv v_h^-)+ \frac1{\nu^+}\vv J_h^+(\vv v_h^+,\vv v_h^+)}\bigg)\le C\|\vv v_h\|_{1,h}^2.
\end{equation}
\end{lemma}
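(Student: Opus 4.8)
The plan is to reduce the bounds \eqref{eq:overlapph} and \eqref{eq:overlapuh} to a single \emph{element-to-neighbour} inequality and then chain it along the short paths supplied by Assumption~\ref{Ass1}; only Assumption~\ref{Ass1} and the shape regularity \eqref{eq:minang} are needed. This is the structure of \cite[Lemma~5.1]{larson-stabfict}, the only changes being that rigid body motions are annihilated by $D(\cdot)$ (so no projection enters) and that for the pressure nothing is subtracted at all. First I would establish the local step: if $e=\mathrm{int}(\partial T_1\cap\partial T_2)\in\mathcal E_h^{\Gamma,\pm}$ and $r$ is a polynomial of degree $\le m$ on each of $T_1,T_2$ (extended to $\mathbb R^d$ accordingly), then $w:=r|_{T_2}-r|_{T_1}$ is a polynomial whose Taylor expansion in the signed distance $t$ to the hyperplane carrying $e$ is a finite sum of the normal-derivative traces $\partial_{\vv n}^\ell w|_e=\jump{\partial_{\vv n}^\ell r}$ (extended polynomially in the tangential variables) times $t^\ell$, $\ell=0,\dots,m$. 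Since $|t|\le c\,h_{T_2}\simeq|e|$ on $T_2$ by \eqref{eq:minang} and polynomial norms on shape-regular simplices scale in the evident way, the inverse and trace estimates of Lemma~\ref{traceinverse} give
\[
\|r\|_{T_2}^2\le C\Big(\|r\|_{T_1}^2+\sum_{\ell=0}^{k_p}|e|^{2\ell+1}\|\jump{\partial_{\vv n}^\ell r}\|_e^2\Big)\qquad(r\in M_h^\pm,\ m=k_p),
\]
and, for $\vv r\in\vv V_h^\pm$ --- a continuous finite element function on $\Omega_h^\pm$, so that $w|_e=0$ and the $\ell=0$ term drops ---
\[
\|D(\vv r)\|_{T_2}^2\le C\Big(\|D(\vv r)\|_{T_1}^2+\sum_{\ell=1}^{s}|e|^{2\ell-1}\|\jump{\partial_{\vv n}^\ell\vv r}\|_e^2\Big),
\]
with $C$ depending only on the shape regularity; these are precisely the weights built into $J_h^\pm$ and $\vv J_h^\pm$.

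Next I would chain this. For $T\in\mathcal T_h^\pm\setminus\mathcal T_{h,i}^\pm$ (this set is contained in $\mathcal T_h^\Gamma$, since such a $T$ meets both $\Omega^\pm$ and its complement, hence $\Gamma$) I take the path $T=K_1^\pm,\dots,K_M^\pm=K_T^\pm$ of Assumption~\ref{Ass1}, with $K_j^\pm\in\mathcal T_h^\Gamma$ for $j<M$, $K_T^\pm\in\mathcal T_{h,i}^\pm$, and every crossed face in $\mathcal E_h^{\Gamma,\pm}$; by \eqref{eq:minang} the length $M$ and all ratios $h_{K_j^\pm}/h_T$ are bounded in terms of the shape regularity only. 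Applying the local step $M-1$ times and absorbing the size ratios yields $\|q_h^\pm\|_T^2\le C\big(\|q_h^\pm\|_{K_T^\pm}^2+\sum_{e}\sum_{\ell=0}^{k_p}|e|^{2\ell+1}\|\jump{\partial_{\vv n}^\ell q_h^\pm}\|_e^2\big)$, the inner sum over the (uniformly bounded) set of path faces $\subset\mathcal E_h^{\Gamma,\pm}$, and analogously for $\|D(\vv v_h^\pm)\|_T^2$ with $\ell$ from $1$ to $s$. Summing over $T\in\mathcal T_h^\pm\setminus\mathcal T_{h,i}^\pm$ turns the left sides into $\|q_h^\pm\|_{\Omega_h^\pm}^2-\|q_h^\pm\|_{\Omega_{h,i}^\pm}^2$ (resp.\ the $D(\vv v_h^\pm)$ version), while on the right each target element $K_T^\pm$ and each face $e\in\mathcal E_h^{\Gamma,\pm}$ is hit by only a bounded number of source elements $T$ (finite overlap, again from shape regularity and the bound on $M$), so the right sides collapse to $\|q_h^\pm\|_{\Omega_{h,i}^\pm}^2$ plus the full jump sums $\sum_{e\in\mathcal E_h^{\Gamma,\pm}}\sum_\ell|e|^{2\ell+1}\|\jump{\partial_{\vv n}^\ell q_h^\pm}\|_e^2$, and similarly for $\vv v_h$. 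Because the $1/\nu^\pm$ in $J_h^\pm$ and $\vv J_h^\pm$ cancels the $\nu^\pm$ prefactor, these sums coincide, up to the $O(1)$ parameters $\gamma_p^\pm,\gamma_{\vv u}^\pm$, with $\nu^\pm J_h^\pm(q_h^\pm,q_h^\pm)$ and $\tfrac1{\nu^\pm}\vv J_h^\pm(\vv v_h^\pm,\vv v_h^\pm)$, so the constants are $\nu$-independent; this proves \eqref{eq:overlapph} and the first inequality of \eqref{eq:overlapuh}. The second inequality of \eqref{eq:overlapuh} is then immediate from the definition of $\|\cdot\|_{1,h}$, which majorises $\|D(\vv v_h)\|_\Omega^2\ge\|D(\vv v_h^\pm)\|_{\Omega_{h,i}^\pm}^2$ together with both $\mathcal E_h^{\Gamma,\pm}$ jump sums (equal, with $\gamma_{\vv u}^\pm=1$, to $\tfrac1{\nu^\pm}\vv J_h^\pm$).

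The one place needing care is not conceptual but bookkeeping: matching the powers of $|e|$ coming out of the Taylor/scaling step to those hard-coded in $J_h^\pm$ and $\vv J_h^\pm$ --- in particular the shift from $2\ell+1$ to $2\ell-1$ caused by differentiating once in the velocity case, and the role of the $H^1$-continuity of $\vv v_h^\pm$ in killing the $\ell=0$ jump that $\vv J_h^\pm$ does not contain --- and verifying that the finite-overlap and path-length constants stay bounded uniformly in the (merely Lipschitz) position of $\Gamma$ relative to the background mesh, which is exactly what Assumption~\ref{Ass1}, the shape regularity \eqref{eq:minang}, and the uniform bound on $M$ provide.
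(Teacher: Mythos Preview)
The paper itself supplies no proof here, only the pointer to \cite[Lemma~5.1]{larson-stabfict} together with the remark that Korn's inequality and projection onto rigid body motions replace Poincar\'e's inequality and projection onto constants; your sketch is precisely the standard ghost-penalty chaining argument behind that reference (local element-to-neighbour control via a Taylor expansion across the shared face, then propagation along the bounded-length paths of Assumption~\ref{Ass1} with a finite-overlap count), so the approaches coincide.

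The only visible discrepancy is your remark that ``no projection enters'' in the velocity step, whereas the paper explicitly names Korn plus rigid-body projection as the needed modification. The paper's intended route is: subtract $\mathcal P_{T_1}^{RM}(\vv r)$, use Korn on $T_1$ to bound the subtracted function by $h_{T_1}\|D(\vv r)\|_{T_1}$, apply the scalar element-to-neighbour inequality to $\vv r-\mathcal P_{T_1}^{RM}(\vv r)$ (whose face jumps coincide with those of $\vv r$, with the $\ell=0$ term vanishing by continuity), and recover $\|D(\vv r)\|_{T_2}$ via an inverse estimate. Your projection-free alternative is also valid, but to make it work you must apply the Taylor/extension argument directly to the entries of $D(\vv r)$ and then bound $\|\jump{\partial_{\vv n}^\ell D(\vv r)}\|_e$ by $C\big(|e|^{-1}\|\jump{\partial_{\vv n}^\ell\vv r}\|_e+\|\jump{\partial_{\vv n}^{\ell+1}\vv r}\|_e\big)$, using the $H^1$-continuity of $\vv r$ across $e$ and an inverse inequality on $e$ for the tangential derivatives; this step is what actually produces the shift from $|e|^{2\ell+1}$ to $|e|^{2\ell-1}$ and is left implicit in your write-up. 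Either route yields the same local inequality and hence \eqref{eq:overlapph}--\eqref{eq:overlapuh} with $\nu$-independent constants.
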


The following result discusses the continuity and coercivity of the bilinear form $a_h$. We omit the proof since they are by now standard and can easily be proved using Lemma \ref{traceinverse}. See, for example, similar results in \cite{guzman-olshanskii}.
\begin{lemma}
Let $\vv v_h,\,\vv w_h \in \vv V_h$ and $q_h\in M_h$. Then, there exists $C>0$ and $h_0$, independent of $h$ and $\nu^\pm$, $\vv v_h$ and $\vv w_h$, such that for all $h\le h_0$ it holds
\[
|a_h(\vv v_h,\vv w_h)|\le C\|\vv v_h\|_{\vv V_h}\|\vv w_h\|_{\vv V_h},
\]
\begin{equation}\label{boundah}
|a_h(\vv v_h, \vv w_h)| \le C \| \vv v_h\|_{1, h, \nu}  \| \vv w_h\|_{1, h}.
\end{equation}
Additionally, for all $\vv v \in  [\vv H^{s+1}(\Omega_h^-) \times \vv H^{s+1}(\Omega_h^+)]\times \vv V_h$ we have
\begin{equation}\label{boundahsmooth}
|a_h(\vv v, \vv w_h)| \le C \| \vv v\|_{1, h, \nu,\star}  \| \vv w_h\|_{1, h}.
\end{equation}
Finally, there exists $\alpha>0$, independent of $h$ and $\nu^\pm$, such that
\[
\alpha\|\vv v_h\|_{\vv V_h}^2\le a_h(\vv v_h,\vv v_h)\quad\forall\,\vv v_h\in \vv V_h.
\]
\end{lemma}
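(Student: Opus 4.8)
I would verify each of the four inequalities termwise, treating separately the five contributions to $a_h$: the two bulk viscous products $(\nu^\pm D(\cdot^\pm),D(\cdot^\pm))_{\Omega^\pm}$, the interface penalty $\sum_{T\in\mathcal T_h^\Gamma}\frac{\gamma}{h_T}\nu^-(\jump{\cdot},\jump{\cdot})_{T_\Gamma}$, the two Nitsche consistency terms, and the ghost-penalty form $\vv J_h$. The bulk, penalty and $\vv J_h$ contributions are immediate: a Cauchy--Schwarz inequality in the relevant $L^2(\Omega^\pm)$, $L^2(T_\Gamma)$ and edge inner products, followed by a discrete Cauchy--Schwarz over the sums, matches them directly against the corresponding pieces of $\|\cdot\|_{\vv V_h}$, $\|\cdot\|_{1,h,\nu}$ and $\|\cdot\|_{1,h}$; wherever a $\nu^-$-weighted quantity must be dominated by a $\nu^+$-weighted one I would use the standing hypothesis $\nu^-\le\nu^+$. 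All the genuine work is therefore concentrated in the consistency terms $(\nu^-D(\vv v_h^-)\vv n,\jump{\vv w_h})_\Gamma$ and $(\nu^-D(\vv w_h^-)\vv n,\jump{\vv v_h})_\Gamma$.

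For the two finite element continuity bounds (the energy estimate and \eqref{boundah}) I would split each consistency term over $T\in\mathcal T_h^\Gamma$, apply Cauchy--Schwarz on each $T_\Gamma$, and then invoke the inverse trace inequality \eqref{inverse2}, legitimate because $D(\vv v_h^-)|_T$ is a matrix of polynomials and the constant there is robust in how $\Gamma$ meets $T$. This trades $h_T^{1/2}\|\sqrt{\nu^-}D(\vv v_h^-)\|_{T_\Gamma}$ for $\|\sqrt{\nu^-}D(\vv v_h^-)\|_{T}$ over the \emph{whole} (cut) element, after which a discrete Cauchy--Schwarz produces the factor $\big(\sum_{T\in\mathcal T_h^\Gamma}\|\sqrt{\nu^-}D(\vv v_h^-)\|_T^2\big)^{1/2}$ paired against the interface-penalty seminorm of the other argument, with the $\nu^-$ weight placed on whichever argument carries the $\nu$-weighted norm. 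The obstacle is that this sum is part of $\|\sqrt{\nu^-}D(\vv v_h^-)\|_{\Omega_h^-}^2$ and so includes the fictitious overlap of $\vv v_h^-$ into $\Omega^+$, which the energy does not control directly. Here I would apply the overlap estimate \eqref{eq:overlapuh} of Lemma~\ref{lem:overlap}: multiplying it by $\nu^-$, using $\Omega_{h,i}^-\subset\Omega^-$, and using $\nu^-\le\nu^+$ to absorb the $\frac{\nu^-}{\nu^+}\vv J_h^+$ term into $\vv J_h^+$, one gets $\sum_{T\in\mathcal T_h^\Gamma}\|\sqrt{\nu^-}D(\vv v_h^-)\|_T^2\le C\big(\|\sqrt{\nu^-}D(\vv v_h^-)\|_{\Omega^-}^2+\vv J_h(\vv v_h,\vv v_h)\big)$, which is dominated by $\|\vv v_h\|_{\vv V_h}^2$ (respectively $\|\vv v_h\|_{1,h,\nu}^2$). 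This closes both continuity estimates.

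For the smooth-data bound \eqref{boundahsmooth} the only change is that $\vv v^-$ is no longer piecewise polynomial, so \eqref{inverse2} is unavailable for it; instead I would leave the interface trace intact, writing $(\nu^-D(\vv v^-)\vv n,\jump{\vv w_h})_{T_\Gamma}\le\big(h_T^{1/2}\|\nu^-D(\vv v^-)\|_{T_\Gamma}\big)\big(h_T^{-1/2}\|\jump{\vv w_h}\|_{T_\Gamma}\big)$, so that the discrete Cauchy--Schwarz yields exactly $\big(\sum_{T\in\mathcal T_h^\Gamma}h_T\|\nu^-D(\vv v^-)\|_{T_\Gamma}^2\big)^{1/2}$ — precisely the augmentation in \eqref{augmentednorm} — against the penalty part of $\|\vv w_h\|_{1,h}$. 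The companion term $(\nu^-D(\vv w_h^-)\vv n,\jump{\vv v})_\Gamma$ still carries the polynomial factor $D(\vv w_h^-)$, so \eqref{inverse2} and the overlap estimate apply as before, the $\nu^-$ being moved onto $\jump{\vv v}$ to match $\|\vv v\|_{1,h,\nu}\le\|\vv v\|_{1,h,\nu,\star}$; the higher-order stabilizations $\vv J_h(\vv v,\cdot)$ vanish since a function in $\vv H^{s+1}(\Omega_h^\pm)$ has single-valued normal derivatives up to order $s$ across interior faces. The bound over the full sum space follows by linearity together with \eqref{boundah} and $\|\cdot\|_{1,h,\nu}\le\|\cdot\|_{1,h,\nu,\star}$.

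\textbf{The main obstacle} is coercivity. Expanding $a_h(\vv v_h,\vv v_h)$ keeps the nonnegative bulk, penalty and $\vv J_h$ terms, and leaves the symmetric cross term $-2(\nu^-D(\vv v_h^-)\vv n,\jump{\vv v_h})_\Gamma$ to be absorbed. Using \eqref{inverse2} and Young's inequality I would bound it by $\epsilon\sum_{T\in\mathcal T_h^\Gamma}\|\sqrt{\nu^-}D(\vv v_h^-)\|_T^2+\tfrac{C_I^2}{\epsilon}\sum_{T\in\mathcal T_h^\Gamma}\tfrac{\|\sqrt{\nu^-}\jump{\vv v_h}\|_{T_\Gamma}^2}{h_T}$, where $C_I$ is the inverse-inequality constant. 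The second part is reabsorbed by the penalty term as soon as $\gamma$ exceeds $C_I^2/\epsilon$. The first part again involves the fictitious overlap, so I would apply \eqref{eq:overlapuh} as above to get $\sum_{T\in\mathcal T_h^\Gamma}\|\sqrt{\nu^-}D(\vv v_h^-)\|_T^2\le C_{\rm ov}\big(\|\sqrt{\nu^-}D(\vv v_h^-)\|_{\Omega^-}^2+\vv J_h(\vv v_h,\vv v_h)\big)$, and then fix $\epsilon$ small enough that $\epsilon C_{\rm ov}<1$, so a fixed positive fraction of both the viscous energy and of $\vv J_h$ survives. Choosing $\epsilon$ first and $\gamma$ afterwards produces $\alpha>0$ independent of $h$, of $\nu^\pm$ (again via $\nu^-\le\nu^+$), and of the position of $\Gamma$; the delicate point is exactly the compatibility of the threshold on $\gamma$ with the overlap constant $C_{\rm ov}$.
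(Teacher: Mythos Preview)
Your proposal is correct and is precisely the standard argument the paper has in mind: the authors explicitly omit the proof, stating that the results ``are by now standard and can easily be proved using Lemma~\ref{traceinverse}'' and pointing to analogous results in \cite{guzman-olshanskii}. Your treatment---Cauchy--Schwarz on each constituent of $a_h$, the inverse trace inequality \eqref{inverse2} for the Nitsche terms, the overlap estimate \eqref{eq:overlapuh} to pass from cut elements back to the physical domain, and a Young/absorption argument with $\gamma$ sufficiently large for coercivity---is exactly that standard route, so there is nothing to contrast.
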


The following result is the discrete analogue of Lemma \ref{lemma:ppm}.

\begin{lemma}\label{Lem688}
	Let $\vv u_h\in\vv V_h$ and $p_h\in M_h$ solve \eqref{eq:mixeddiscform}. Then, there exists $C,h_0>0$, depending only on $\Omega$ and $\Gamma$, such that for $h\le h_0$ it holds
	\begin{equation}\label{eq:phavgpm}
	\|p_h^\pm-\mathrm{avg}_{\Omega^\pm_{h,i}}(p_h^\pm)\|_{\Omega^\pm}\le C\big(\|\nu^\pm D(\vv u_h^\pm)\|_{\Omega^\pm}+(\nu^\pm J_h(p_h^\pm,p_h^\pm))^{1/2}+\|\vv F_h\|_{-1,h}\big),\end{equation}
	and
	\begin{equation}\label{eq:phinfsup}
	\|p_h\|_{\Omega}\le C\big(\|\vv u_h\|_{1, h, \nu}+ (\nu^- J_h^-(p_h^-,p_h^-))^{1/2}+(\nu^+ J_h^+(p_h^+,p_h^+))^{1/2}+\|\vv F_h\|_{-1,h}\big).
	\end{equation}
\end{lemma}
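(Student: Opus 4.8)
The plan is to mimic the continuous argument from Lemma~\ref{lemma:ppm}, but with two modifications dictated by the unfitted discretization: first, a Bogovskii-type right inverse of the divergence is not available on the subdomains $\Omega^\pm$ themselves (they do not respect the mesh), so I will replace it by the discrete inf-sup results from Theorem~\ref{thm:infsupgo} and Assumption~\ref{Ass0}; second, the ghost-penalty terms $J_h^\pm$ must be carried along since the discrete pressure is only controlled on the physical subdomains $\Omega^\pm$, while the test functions naturally live on the larger $\Omega_h^\pm$. The key preparatory observation is that, by Lemma~\ref{lem:overlap}, control of $p_h^\pm$ on $\Omega_{h,i}^\pm$ plus the stabilization $\nu^-J_h^\pm(p_h^\pm,p_h^\pm)$ yields control of $p_h^\pm$ on all of $\Omega_h^\pm$, hence in particular on $\Omega^\pm$.

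For \eqref{eq:phavgpm}: set $\widetilde p_h^\pm := p_h^\pm-\mathrm{avg}_{\Omega_{h,i}^\pm}(p_h^\pm)\in L_0^2(\Omega_{h,i}^\pm)$ and apply Theorem~\ref{thm:infsupgo} to obtain $\vv v_h\in \vv V_h^\Gamma\cap\vv H_0^1(\Omega_{h,i}^\pm)$ with $\theta\|\widetilde p_h^\pm\|_{\Omega_{h,i}^\pm}\le (\div\,\vv v_h,\widetilde p_h^\pm)_{\Omega_{h,i}^\pm}/\|\vv v_h\|_{1,\Omega_{h,i}^\pm}$. Extend $\vv v_h$ by zero to all of $\Omega$ (it is zero near $\Gamma$, so no jump terms appear and $\|\vv v_h\|_{1,h}\le C\|\vv v_h\|_{1,\Omega_{h,i}^\pm}$); since $\mathrm{avg}_{\Omega_{h,i}^\pm}(p_h^\pm)$ integrates against $\div\,\vv v_h$ to zero, $(\div\,\vv v_h,\widetilde p_h^\pm)_{\Omega_{h,i}^\pm}=(\div\,\vv v_h,p_h^\pm)_{\Omega_{h,i}^\pm}=-b_h(\vv v_h,p_h)$ (all the interface contributions in $b_h$ vanish on this $\vv v_h$). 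Using the first equation of \eqref{eq:mixeddiscform}, $-b_h(\vv v_h,p_h)=a_h(\vv u_h,\vv v_h)-\vv F_h(\vv v_h)$, which by \eqref{boundah} is bounded by $C(\|\vv u_h\|_{1,h,\nu}+\|\vv F_h\|_{-1,h})\|\vv v_h\|_{1,h}$; one then has to replace $\|\vv u_h\|_{1,h,\nu}$ by its $\Omega^\pm$-localized version, again invoking Lemma~\ref{lem:overlap} to absorb the jump terms into $\|\nu^\pm D(\vv u_h^\pm)\|_{\Omega^\pm}$ plus $\nu^\pm$-weighted ghost penalties. This bounds $\|\widetilde p_h^\pm\|_{\Omega_{h,i}^\pm}$; finally add $(\nu^\pm J_h^\pm(p_h^\pm,p_h^\pm))^{1/2}$ and use \eqref{eq:overlapph} to pass from $\Omega_{h,i}^\pm$ to $\Omega^\pm\subset\Omega_h^\pm$, noting that $\mathrm{avg}_{\Omega_{h,i}^\pm}(p_h^\pm)$ is the constant being subtracted.

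For \eqref{eq:phinfsup}: the pressure $p_h=(p_h^-,p_h^+)\in M_h$ has the single global constraint $(p_h^-,1)_{\Omega^-}+(p_h^+,1)_{\Omega^+}=0$, so I first split $\|p_h\|_\Omega^2=\|p_h^+\|_{\Omega^+}^2+\|p_h^-\|_{\Omega^-}^2$, bound the oscillation parts $\|p_h^\pm-\mathrm{avg}_{\Omega_{h,i}^\pm}(p_h^\pm)\|_{\Omega^\pm}$ by \eqref{eq:phavgpm} (which I have just proved), and I am left to control the two averages. As in Lemma~\ref{lem:rho-}, the global zero-mean constraint gives $|\Omega^+|\,\mathrm{avg}_{\Omega^+}(p_h^+)=-|\Omega^-|\,\mathrm{avg}_{\Omega^-}(p_h^-)$, which only ties the \emph{physical}-subdomain averages; I will need a short argument relating $\mathrm{avg}_{\Omega^\pm}(p_h^\pm)$ to $\mathrm{avg}_{\Omega_{h,i}^\pm}(p_h^\pm)$ using, once more, the oscillation estimate \eqref{eq:phavgpm} and the fact that $|\Omega^\pm\setminus\Omega_{h,i}^\pm|\le C|\omega_h|$. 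To turn one average into an estimate, pick the global $\vv v\in\vv V$ (or rather its Scott--Zhang / Fortin interpolant into $\vv V_h^{\rm bulk}$, which exists by Assumption~\ref{Ass0}) with $\div\,\vv v$ a suitable piecewise constant detecting $\mathrm{avg}_{\Omega_{h,i}^-}(p_h^-)$, plug it into \eqref{eq:mixeddiscform}, and invoke \eqref{boundah}; here the right-hand side $G_h$ does \emph{not} appear because this particular test function is in $\vv V_h$ and only the momentum equation is used, so the bound is genuinely $C(\|\vv u_h\|_{1,h,\nu}+\|\vv F_h\|_{-1,h}+(\nu^-J_h^-)^{1/2}+(\nu^+J_h^+)^{1/2})$. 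Assembling the average and oscillation contributions gives \eqref{eq:phinfsup}.

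\textbf{Main obstacle.} The delicate point is the bookkeeping of the \emph{three} different domains $\Omega^\pm\subset\Omega_h^\pm\supset\Omega_{h,i}^\pm$ and the corresponding averages: the inf-sup stability of Theorem~\ref{thm:infsupgo} lives on $\Omega_{h,i}^\pm$, the norm in the statement is over $\Omega^\pm$, and the overlap region $\omega_h$ is precisely where $p_h^\pm$ is not directly controlled and must be recovered through the ghost-penalty terms $J_h^\pm$ via Lemma~\ref{lem:overlap}. Making sure that every time one switches domains the incurred error is absorbed by $(\nu^\pm J_h^\pm(p_h^\pm,p_h^\pm))^{1/2}$ (and not by something $\nu$-dependent in the wrong direction) — together with the subtle replacement of $\|\vv u_h\|_{1,h,\nu}$ appearing from $a_h$-continuity by the subdomain-localized quantities in \eqref{eq:phavgpm} — is where the care is needed; the rest is a direct transcription of the continuous Lemma~\ref{lemma:ppm}.
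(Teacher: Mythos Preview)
Your plan for \eqref{eq:phavgpm} is essentially the paper's argument, with one correction: you should not route the estimate of $a_h(\vv u_h,\vv v_h)$ through the global continuity bound \eqref{boundah}. That inequality produces the full two-sided norm $\|\vv u_h\|_{1,h,\nu}$, which contains both $\|\nu^+ D(\vv u_h^+)\|_{\Omega^+}$ and $\|\nu^- D(\vv u_h^-)\|_{\Omega^-}$, and there is no way to ``replace'' it afterwards by the one-sided quantity $\|\nu^\pm D(\vv u_h^\pm)\|_{\Omega^\pm}$ that the statement asks for. The paper instead exploits directly that $\vv v_h=(\vv w_h^-,0)$ (resp.\ $(0,\vv w_h^+)$) with $\vv w_h^\pm$ supported in $\Omega_{h,i}^\pm$: all interface terms of $a_h$ vanish on such a test function, only the single bulk term $(\nu^\pm D(\vv u_h^\pm),D(\vv w_h^\pm))_{\Omega^\pm}$ and the one-sided ghost penalty $\vv J_h^\pm$ survive, and the latter is absorbed into $\|\nu^\pm D(\vv u_h^\pm)\|_{\Omega^\pm}$ by inverse estimates and Korn's inequality. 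That is the localisation you need; it does not come from \eqref{boundah}.

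For \eqref{eq:phinfsup} your outline has the right shape (oscillation $+$ averages, global bulk inf-sup, momentum equation), but the key technical step is missing and your suggestion does not quite supply it. Assumption~\ref{Ass0} applies to a \emph{single-valued} pressure $q\in M_h^{\rm bulk}\cap L_0^2(\Omega)$, whereas the piecewise constant $\alpha_h=(\mathrm{avg}_{\Omega_{h,i}^-}(p_h^-),\,\mathrm{avg}_{\Omega_{h,i}^+}(p_h^+))$ you wish to detect is two-valued on the cut strip $\omega_h$ and is not an element of $M_h^{\rm bulk}$. A ``Fortin interpolant with divergence a suitable piecewise constant'' does not resolve this, because the characteristic function of $\Omega_{h,i}^-$ is not in $M_h^{\rm bulk}$ when pressures are continuous, and even in the discontinuous case the divergence of a bulk velocity cannot jump across $\Gamma$. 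The paper's device is to build an explicit $r_h\in M_h^{\rm bulk}$ agreeing with $\alpha_h^\pm$ on $\Omega_{h,i}^\pm$ (by $L^2$ projection onto piecewise constants, or nodal interpolation in the continuous case), correct its mean to get $\tilde r_h\in M_h^{\rm bulk}\cap L_0^2(\Omega)$, and verify $\|\tilde r_h-\alpha_h\|_\Omega\le C h^{1/2}\|p_h\|_\Omega$. Assumption~\ref{Ass0} is then applied to $\tilde r_h$, the resulting bulk velocity is embedded diagonally in $\vv V_h$ and tested against the momentum equation via \eqref{boundah}, and at the end an $h^{1/2}\|p_h\|_\Omega$ term is absorbed into the left-hand side. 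This absorption is precisely the origin of the hypothesis $h\le h_0$ in the statement, which your sketch does not account for.
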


\begin{proof}
Let $q_h^\pm:=p_h^\pm-\mathrm{avg}_{\Omega_{h,i}^\pm}(p_h^\pm)$. Noting that $J_h^\pm(p_h^\pm,p_h^\pm)=J_h(q_h^\pm,q_h^\pm)$ and employing  {\eqref{eq:overlapph} from} Lemma~\ref{lem:overlap}, we get
\begin{equation}\label{eq:qhavg}C\|q_h^\pm\|_{\Omega^\pm}^2\le \|q_h\|_{\Omega_{h,i}^\pm}^2+\nu^\pm J_h^\pm(p_h^\pm,p_h^\pm).\end{equation}
On the other hand, since $q_h^\pm\in L_0^2(\Omega_{h,i}^\pm)$,  Theorem~\ref{thm:infsupgo} implies that there exist $\theta,h_0>0$, depending only on $\Omega$, such that for all $h\le h_0$, it holds
\begin{equation}\label{eq:infsup}
\theta\|q_h^\pm\|_{\Omega^\pm_{h,i}}\le \sup_{\vv w_h^\pm \in  \vv V_h^{\rm bulk} \cap \vv H_0^1(\Omega_{h,i}^\pm)}\frac{(\div \, \vv w_h^{ {\pm}}, q_h^\pm)_{\Omega_{h,i}^\pm}}{\|\vv w_h^\pm\|_{1,\Omega_{h,i}^\pm}}
=\sup_{\vv w_h^\pm \in \vv V_h^{\rm bulk} \cap \vv H_0^1(\Omega_{h,i}^\pm)}\frac{(\div \, \vv w_h^\pm,p_h^\pm)_{\Omega_{h,i}^\pm}}{\|\vv w_h^\pm \|_{1,\Omega_{h,i}^\pm}}.
\end{equation}
However, for $\vv w_h^\pm \in  {\vv V_h^{\rm bulk}} \cap \vv H_0^1(\Omega_{h,i}^\pm)$ we notice equalities
\begin{equation}\label{aux709}
\frac{(\div \, \vv w_h^\pm,p_h^\pm)_{\Omega_{h,i}^\pm}}{\|\vv w_h\|_{1,\Omega_{h,i}^\pm}}= \frac{b_h( \vv v_h,p_h)}{\|\vv v_h\|_{1,h}}=\frac{\vv F_h(\vv v_h)-a_h(\vv v_h, \vv v_h)}{\|\vv v_h\|_{1,h}},
\end{equation}
where $\vv v_h=(\vv w_h^-, 0)$ or $\vv v_h=(0, \vv w_h^+)$. Because  $\vv v_h$ is supported on $\Omega_{h,i}^\pm$ we have
\begin{equation}\label{aux713}
a_h(\vv u_h,\vv v_h):=\big(\nu^-D(\vv u_h^-), D(\vv v_h^-)\big)_{\Omega^-}+\big(\nu^+D(\vv u_h^+),D(\vv v_h^+)\big)_{\Omega^+}+ \vv J_h(\vv u_h,\vv v_h),
\end{equation}
With the help of the Cauchy-Schwarz inequality, inverse estimates and Korn's inequality we obtain
\begin{equation}\label{aux717}
|a_h(\vv u_h,\vv v_h)| \le C \|\nu^\pm D(\vv u_h^\pm)\|_{\Omega^\pm} \|\vv v_h\|_{1,h}.
\end{equation}
Using \eqref{aux709}--\eqref{aux713} in \eqref{eq:infsup},  we arrive at
\begin{equation}\label{aux721}
\|q_h^\pm\|_{\Omega^\pm}\le C\big(\|\nu^\pm D(\vv u_h^\pm)\|_{\Omega^\pm}+(\nu^\pm J_h^\pm(p_h^\pm,p_h^\pm) )^{1/2}+\|\vv F_h\|_{-1,h}\big),
\end{equation}
which leads to \eqref{eq:phavgpm}.

 In order to prove  {\eqref{eq:phinfsup}}, we  {consider $\alpha_h=p_h-q_h$}
 and  observe that $\|\alpha_h\|_{L^\infty(\Omega)} \le C \|p_h\|_{\Omega}$. Moreover,  a simple calculation shows that
 \begin{equation*}
 (\alpha_h, 1)_{\Omega}=\frac{|\Omega^+|}{|\Omega_{h,i}^+|} (p_h^+, 1)_{\Omega^+}+\frac{|\Omega^+|}{|\Omega_{h,i}^+|}(p_h^-, 1)_{\Omega^-}
 {-}\frac{|\Omega^+|}{|\Omega_{h,i}^+|} (p_h^+, 1)_{\Omega^+\setminus \Omega_{h,i}^+} {-}\frac{|\Omega^-|}{|\Omega_{h,i}^-|} (p_h^-, 1)_{\Omega^-\setminus \Omega_{h,i}^-}.
 \end{equation*}
 Hence, using that $(p_h^-, 1)_{\Omega^-}+(p_h^+, 1)_{\Omega^+}=0$ and that $ {|\Omega_h^\pm\setminus\Omega_{h,i}^\pm|} =|\Omega_h^\pm|-|\Omega_{h,i}^\pm| \le C \, h$ we have
\begin{equation}\label{aux222}
\begin{split}
| (\alpha_h, 1)_{\Omega}|& {\le C \left(|(p_h^+, 1)_{\Omega^+\setminus \Omega_{h,i}^+}|+|(p_h^-, 1)_{\Omega^-\setminus \Omega_{h,i}^-}|\right)+ \left|\frac{|\Omega^+|}{|\Omega_{h,i}^+|}-\frac{|\Omega^-|}{|\Omega_{h,i}^-|}\right|  |(p_h^+,1)_{\Omega^+}|}\\
&\le  {C (\|p_h^+\|_{\Omega^+} \|1\|_{\Omega^+\setminus \Omega_{h,i}^+}+\|p_h^-\|_{\Omega^-} \|1\|_{\Omega^-\setminus \Omega_{h,i}^-}+ h\|p_h^+\|_{L^1(\Omega^+)}) }
\le C \,  {h^{\frac12}} \|p_h\|_{\Omega}.
\end{split}
\end{equation}

In the case \ref{when} $ {M_h^{\rm bulk}}= M_{h, \rm disc}$ we let $r_h \in  {M_h^{\rm bulk}}$ to be the $L^2$ projection of $\alpha_h$ onto piecewise constants with respect to the mesh $\mathcal{T}_h$. In the case, $ {M_h^{\rm bulk}}= M_{h, \rm cont}$ we let $r_h \in {M_h^{\rm bulk}}$ be the continuous piecewise linear function such that $r_h(x) =\alpha_h^+(x)$ if $x$ is a vertex and $x \in  \Omega^+$, and $r_h(x) =\alpha_h^-(x)$ if $x$ is a vertex and $x \in  \overline{\Omega^-}$. In either case, $r_h|_{\Omega_{h,i}^\pm}= \alpha_h^\pm$.

 Recalling the notation $\omega_h=\Omega\setminus(\Omega_{h,i}^-\cup\Omega_{h,i}^+)$, we then note that
\begin{equation}\label{aux333}
\|r_h -\alpha_h\|_{\Omega}= \|r_h-\alpha_h\|_{\omega_h} \le C h^{ {\frac12}}  \|r_h-\alpha_h\|_{L^\infty(\omega_h)} \le   C h^{ {\frac12}}  \|\alpha_h\|_{L^\infty(\omega_h)} \le  C h^{ {\frac12}}  \|p_h\|_{\Omega}.
\end{equation}
We let $\tilde{r}_h=r_h-\text{avg}_{\Omega}(r_h)$. From \eqref{aux222} and \eqref{aux333} we have that
\begin{equation}\label{aux444}
\|\tilde{r}_h-r_h\|_{\Omega} \le C h^{ {\frac12}} \|p_h\|_{\Omega} \quad \text{ which implies } \quad \|\tilde{r}_h-\alpha_h\|_{\Omega} \le C h^{ {\frac12}} \|p_h\|_{\Omega}.
\end{equation}
Assumption~\ref{Ass0} provides us with $\vv v_h  \in  {\vv V_h^{\rm bulk}}$ such that
\begin{equation}\label{aux555}
\xi \|\tilde{r}_h \|_{\Omega} \le (\div \, \vv v_h, \tilde{r}_h)_{\Omega}\quad\text{and}\quad{\|\vv v_h\|_{1, \Omega}=1}.
\end{equation}

Let $\vv w_h \in \vv V_h$ be given by $\vv w_h=(\vv v_h|_{\Omega_h^-}, \vv v_h|_{\Omega_h^+})$. It holds $\|\vv w_h\|_{1,h} \le C \|\vv v_h\|_{1, \Omega}$. To verify the last inequality, we note that the first term in the definition of $\|\vv w_h\|_{1,h}$  vanish, while the second jump term can be estimated with the help of the finite element trace and inverse inequalities.
Hence, we get
\begin{alignat*}{1}
 (\div \, \vv v_h, \tilde{r}_h)_{\Omega}&=(\div \, \vv v_h, p_h^-)_{\Omega^-} + (\div \, \vv v_h, p_h^+)_{\Omega^+}+(\div \, \vv v_h, \tilde{r}_h^-  -p_h^-)_{\Omega^-} + (\div \, \vv v_h, \tilde{r}_h^+-p_h^+)_{\Omega^+} \\
 &= a_h(\vv u_h,\vv w_h)  -  \vv F_h(\vv w_h) +(\div \, \vv v_h, \tilde{r}_h^- - p_h^-)_{\Omega^-} + (\div \, \vv v_h, \tilde{r}_h^+-p_h^+)_{\Omega^+},
 \end{alignat*}
 where we used the first equation of \eqref{eq:mixeddiscform}.
Thanks to \eqref{boundah} and \eqref{aux555} we have
 \begin{alignat}{1}
\xi \|\tilde{r}_h \|_{\Omega} &\le   C (\| \vv u_h\|_{1,h, \nu}+ \| \vv F_h\|_{-1,h}+\|\tilde{r}_h^--p_h^-\|_{\Omega^-}+ \|\tilde{r}_h^+-p_h^+\|_{\Omega^+})  \nonumber \\
 &\le  C (\| \vv u_h\|_{1,h, \nu}+ \| \vv F_h\|_{-1,h}+\|\tilde{r}_h-\alpha_h\|_{\Omega}+ \|q_h^-\|_{\Omega^-}+ \|q_h^+\|_{\Omega^+}) . \label{aux11101}
\end{alignat}
We then use the triangle inequality, \eqref{aux721}, \eqref{aux444}, and  \eqref{aux11101} to get
\begin{alignat*}{1}
\|p_h\|_{\Omega} &\le  \|q_h^-\|_{\Omega^-}+\|q_h^+\|_{\Omega^+}+ \|\alpha_h-\tilde{r}_h\|_{\Omega}+ \|\tilde{r}_h\|_{\Omega} \\
&\le  C (\| \vv u_h\|_{1,h, \nu}+ \| \vv F_h\|_{-1,h} + (\nu^- J_h^-(p_h^-,p_h^-))^{1/2}+(\nu^+ J_h^+(p_h^+,p_h^+))^{1/2}+ h^{ {\frac12}} \|p_h\|_{\Omega}) .
\end{alignat*}
The result now follows after taking  $h_0$ small enough.

\end{proof}
In order to prove \eqref{eq:discaim}, we start by obtaining an estimate for $\|\nu^-D(\vv u_h^-)\|_{\Omega^-}$ using the energy norm \eqref{eq:defnormVh}.

%%%%%%%%%%%%%%%%%%%%

\begin{lemma}
	\label{lem:energyh}
	Let $ (\vv u_h,p_h)\in V_h\times M_h$ be a solution of \eqref{eq:mixeddiscform}. Then, there exists $C>0$, independent of $h$ and $\nu^\pm$, such that
\begin{equation}\label{eq:rhomuh}
		\begin{split}
		&\|\nu^-D(\vv u_h^-)\|_{\Omega^-}^2+ {\nu^-\vv J_h^-(\vv u_h^-,\vv u_h^-)}
		+\nu^-J_h^-(p_h^-,p_h^-) \\
		&\qquad\qquad\qquad\le C\big(\|\vv F_h\|_{-1,h}+\nu^-\|G_h^-\|_{-1,h}+\nu^+\|G_h^+\|_{-1,h}\big)^2.
		\end{split}
	\end{equation}
\end{lemma}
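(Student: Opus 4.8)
The plan is to mimic the proof of Lemma~\ref{lem:rho-}, with the continuous energy identity replaced by the discrete one and the pressure bounds of Lemma~\ref{lemma:ppm} replaced by those of Lemma~\ref{Lem688}. First I take $\vv v_h=\vv u_h$ in the first equation of \eqref{eq:mixeddiscform} and $q_h=p_h$ in the second; subtracting, the $b_h$ terms cancel and
\[
a_h(\vv u_h,\vv u_h)+J_h(p_h,p_h)=\vv F_h(\vv u_h)-G_h(p_h).
\]
Since $J_h(p_h,p_h)\ge 0$ and $a_h$ is coercive on $\vv V_h$ for $h\le h_0$, the left-hand side is bounded below by $c\,(\|\vv u_h\|_{\vv V_h}^2+J_h(p_h,p_h))$ with $\|\cdot\|_{\vv V_h}$ from \eqref{eq:defnormVh}, so everything reduces to estimating $\vv F_h(\vv u_h)-G_h(p_h)$.

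For the first term, $\vv F_h(\vv u_h)\le\|\vv F_h\|_{-1,h}\|\vv u_h\|_{1,h}\le(\nu^-)^{-1/2}\|\vv F_h\|_{-1,h}\|\vv u_h\|_{\vv V_h}$, the last inequality being a term-by-term comparison of $\|\cdot\|_{1,h}$ with $\|\cdot\|_{\vv V_h}$ that uses only $\nu^-\le\nu^\pm$ and $\gamma^\pm_{\vv u}=1$. The term $G_h(p_h)$ is the delicate one. Setting $c_\pm:=\mathrm{avg}_{\Omega^\pm_{h,i}}(p_h^\pm)$ and $q_h^\pm:=p_h^\pm-c_\pm$, and using $G_h(1)=0$ (hence $G_h^-(1)=-G_h^+(1)$), I decompose
\[
G_h(p_h)=G_h^-(q_h^-)+G_h^+(q_h^+)+(c_+-c_-)\,G_h^+(1).
\]
Since subtracting a global constant leaves all jump seminorms unchanged, $\|q_h^\pm\|_{\pm,h}^2=\|q_h^\pm\|_{\Omega^\pm}^2+\nu^\pm J_h^\pm(p_h^\pm,p_h^\pm)$ (recall $\gamma^\pm_p=1$), and \eqref{eq:phavgpm} bounds $\|q_h^\pm\|_{\Omega^\pm}$; hence $|G_h^\pm(q_h^\pm)|\le C\|G_h^\pm\|_{-1,h}\big(\|\nu^\pm D(\vv u_h^\pm)\|_{\Omega^\pm}+(\nu^\pm J_h^\pm(p_h^\pm,p_h^\pm))^{1/2}+\|\vv F_h\|_{-1,h}\big)$. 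For the remaining term, $\|1\|_{+,h}=|\Omega^+|^{1/2}$ gives $|G_h^+(1)|\le C\|G_h^+\|_{-1,h}$, while $|c_+-c_-|\le C\|p_h\|_\Omega$ (Cauchy--Schwarz, $\Omega^\pm_{h,i}\subset\Omega^\pm$, and $|\Omega^\pm_{h,i}|\ge c_0>0$ for $h\le h_0$), so \eqref{eq:phinfsup} controls it. It is essential to route this leftover through $G_h^+(1)$ rather than $G_h^-(1)$: $\|p_h\|_\Omega$ is only controlled with a $\sqrt{\nu^+}$ weight (through $\|\vv u_h\|_{1,h,\nu}\le\sqrt{\nu^+}\|\vv u_h\|_{\vv V_h}$), which must be paired with $\|G_h^+\|_{-1,h}$ to fit the target right-hand side.

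Next I rewrite every factor above through $\|\vv u_h\|_{\vv V_h}$ and $J_h(p_h,p_h)^{1/2}$, using $\|\nu^\pm D(\vv u_h^\pm)\|_{\Omega^\pm}\le\sqrt{\nu^+}\|\vv u_h\|_{\vv V_h}$, $(\nu^\pm J_h^\pm(p_h^\pm,p_h^\pm))^{1/2}\le\sqrt{\nu^+}J_h(p_h,p_h)^{1/2}$, and $\|\vv u_h\|_{1,h,\nu}\le\sqrt{\nu^+}\|\vv u_h\|_{\vv V_h}$. Young's inequality with a small parameter absorbs the resulting $\|\vv u_h\|_{\vv V_h}^2$ and $J_h(p_h,p_h)$ contributions into the left-hand side, while each residual cross term $\|G_h^\pm\|_{-1,h}\|\vv F_h\|_{-1,h}$ is split as $\big(\sqrt{\nu^\pm}\|G_h^\pm\|_{-1,h}\big)\big((\nu^\pm)^{-1/2}\|\vv F_h\|_{-1,h}\big)$ and controlled using $(\nu^+)^{-1/2}\le(\nu^-)^{-1/2}$. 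This gives
\[
\|\vv u_h\|_{\vv V_h}^2+J_h(p_h,p_h)\le C\big((\nu^-)^{-1/2}\|\vv F_h\|_{-1,h}+\sqrt{\nu^-}\|G_h^-\|_{-1,h}+\sqrt{\nu^+}\|G_h^+\|_{-1,h}\big)^2.
\]
Finally, multiplying by $\nu^-$, using $\|\nu^- D(\vv u_h^-)\|_{\Omega^-}^2+\nu^-\vv J_h^-(\vv u_h^-,\vv u_h^-)+\nu^- J_h^-(p_h^-,p_h^-)\le 2\nu^-(\|\vv u_h\|_{\vv V_h}^2+J_h(p_h,p_h))$ (again term by term, with $\nu^-\le\nu^\pm$) and $\sqrt{\nu^-\nu^+}\le\nu^+$, yields \eqref{eq:rhomuh}. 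The main obstacle is the bookkeeping of viscosity weights in the $G_h$ term: one cannot simply estimate $\|p_h^\pm\|_{\Omega^\pm}$ by $\|p_h\|_\Omega$, since that would produce a forbidden $\nu^+\|G_h^-\|_{-1,h}^2$, which is exactly why the finer mean-value splitting is required and why both \eqref{eq:phavgpm} and \eqref{eq:phinfsup} must be invoked, mirroring the continuous Lemma~\ref{lem:rho-}.
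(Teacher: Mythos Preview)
Your proof is correct and follows essentially the same approach as the paper's: the same energy identity, the same mean-value decomposition of $G_h(p_h)$ routed through $G_h^+(1)$, and the same invocation of both parts of Lemma~\ref{Lem688}. One small slip to fix: you write $\|\nu^\pm D(\vv u_h^\pm)\|_{\Omega^\pm}\le\sqrt{\nu^+}\,\|\vv u_h\|_{\vv V_h}$ and $(\nu^\pm J_h^\pm)^{1/2}\le\sqrt{\nu^+}\,J_h^{1/2}$, but to reach your stated final bound with the $\sqrt{\nu^-}\,\|G_h^-\|_{-1,h}$ term you need the sharper (and equally trivial) inequalities $\|\nu^\pm D(\vv u_h^\pm)\|_{\Omega^\pm}\le\sqrt{\nu^\pm}\,\|\vv u_h\|_{\vv V_h}$ and $(\nu^\pm J_h^\pm)^{1/2}\le\sqrt{\nu^\pm}\,J_h^{1/2}$, exactly as the paper does; using $\sqrt{\nu^+}$ on the minus side would, after Young's inequality, produce the forbidden $\nu^+\|G_h^-\|_{-1,h}^2$ term you yourself warn against.
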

\begin{proof}
	We use the first and second equation of \eqref{eq:mixeddiscform} with $\vv v_h=\vv u_h$ and $q_h=p_h$, respectively, and the coercivity of $a_h$  {for $\gamma$ large enough} to get
	\begin{equation}\label{eq:nrgnum}
	\alpha\|\vv u_h\|_{\vv V_h}^2+J_h(p_h,p_h)\le a_h(\vv u_h,\vv u_h)+J_h(p_h,p_h)\le|\vv F_h(\vv u_h)|+|G_h(p_h)|,
	\end{equation}
 {with some $\alpha>0$ independent of $\nu$ and $h$.}
	By definition of the norms $\|\cdot\|_{ {1,h}}$ and $\|\cdot\|_{\vv V_h}$, and since $\nu^-\le\nu^+$, we get
\begin{equation}\label{aux808}
	|\vv F_h(\vv u_h)|\le\|\vv F_h\|_{-1,h}\|\vv u_h\|_{1,h}\le \frac{\|\vv F_h\|_{-1,h}\|\vv u_h\|_{\vv V_h}}{\sqrt{\nu^-}},
\end{equation}
so that it only remains to estimate $|G_h(p_h)|$. In order to do this, we will decompose the expression $G_h(p_h)=G_h^-(p_h^-)+G_h^+(p_h^+)$ into three terms:
	%\begin{equation}\label{eq:decompgh}
	\[
	G_h(p_h) = G_h^-(p_h^--\alpha_h^-)+G_h^+(p_h^+-\alpha_h^+)+G_h(\alpha_h),
	\]
	%\end{equation}
	where $\alpha_h^\pm:=\mathrm{avg}_{\Omega_{h,i}^\pm}(p_h^\pm)$.
	Then, using \eqref{eq:phavgpm} and $\nu^- \le \nu^+$ we get
	\[
	\big|G_h^\pm(p_h^\pm-\alpha_h^\pm)\big|\le C\sqrt{\nu^\pm}\|G_h^\pm\|_{\pm,h}\big(\|\sqrt{\nu^\pm} D(\vv u_h^\pm)\|_{\Omega^\pm}+J_h^\pm(p_h^\pm,p_h^\pm)^{1/2}\big)+C\|G_h^\pm\|_{\pm,h}\|\vv F_h\|_{-1,h}.
	\]
	On the other hand,
	\begin{equation*}
	|G_h(\alpha_h)|=|G_h^+(\alpha_h^+)+G_h^-(\alpha_h^-)|=|G_h^+(\alpha_h^+-\alpha_h^-)| \le \|G_h^+\|_{+, h} \|\alpha_h^+-\alpha_h^-\|_{\Omega^+} \le C  \|G_h^+\|_{+, h} \|p_h\|_{\Omega},
	\end{equation*}
	where we used our assumption that $G_h^+(1)=-G_h^-(1)$ .
	Therefore, we have
	\begin{equation}\label{aux828}
	|G_h(p_h)|  \le C ( \sqrt{\nu^-}\|G_h^-\|_{-1,h}+ \sqrt{\nu^+}\|G_h^+\|_{-1,h}) (\|\vv u_h\|_{\vv V_h}+ J_h(p_h, p_h)^{1/2}+ \frac{1}{\sqrt{\nu^-}}{\|\vv F_h\|_{-1,h}}+ \frac{1}{\sqrt{\nu^+}}{\| p_h\|_{\Omega}}),
	\end{equation}
	where we used that $\nu^- \le \nu^+$. After re-scaling by $\frac{1}{\sqrt{\nu^+}}$ the estimate \eqref{eq:phinfsup} yields for the last term on the right-hand side of \eqref{aux828} the bound,
	\[
	\frac{1}{\sqrt{\nu^+}}{\| p_h\|_{\Omega}} \le C (\|\vv u_h\|_{\vv V_h}+ J_h(p_h, p_h)^{1/2}+ \frac{1}{\sqrt{\nu^-}}{\|\vv F_h\|_{-1,h}}),
	\]
	again due to $\nu^- \le \nu^+$.
Using now \eqref{aux808} and \eqref{aux828} to bound from above the right-hand side of \eqref{eq:nrgnum} leads after some calculations to
	\begin{equation*}
	\alpha\|\vv u_h\|_{\vv V_h}^2+J_h(p_h,p_h) \le  C ( \sqrt{\nu^-}\|G_h^-\|_{-1,h}+ \sqrt{\nu^+}\|G_h^+\|_{-1,h}) (\|\vv u_h\|_{\vv V_h}+ J_h(p_h, p_h)^{1/2}+ \frac{1}{\sqrt{\nu^-}}{\|\vv F_h\|_{-1,h}}).
	\end{equation*}
	Therefore, we have
	\begin{equation*}
	\|\vv u_h\|_{\vv V_h}^2+J_h(p_h,p_h) \le  C ( \sqrt{\nu^-}\|G_h^-\|_{-1,h}+ \sqrt{\nu^+}\|G_h^+\|_{-1,h}  {+} \frac{1}{\sqrt{\nu^-}}{\|\vv F_h\|_{-1,h}})^2.
	\end{equation*}
	The result now follows by multiplying both sides by $\nu^-$ and using that $\nu^- \le \nu^+$.
	\end{proof}
As an immediate consequence of Lemma~\ref{lem:energyh} and \eqref{eq:phavgpm}, we have the following result.
\begin{lemma}
	%\label{lem:phmavg}
	Let $\vv u_h\in \vv V_h$ and $p_h\in M_h$ solve \eqref{eq:mixeddiscform}. Then, there exist $C,h_0>0$, depending only on $\Omega$ and $\Gamma$, such that for $h\le h_0$,
	\begin{equation}
	\label{eq:phavgm}
	\|p_h^--\mathrm{avg}_{\Omega_{h,i}^-}(p_h^-)\|_{\Omega^-}\le C\big(\nu^-\|G_h^-\|_{-1,h}+\nu^+\|G_h^+\|_{-1,h}+\|\vv F_h\|_{-1,h} \big).
	\end{equation}
\end{lemma}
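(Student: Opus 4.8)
The proof is a direct combination of the two preceding results, so the plan is short. First I would specialize the discrete pressure estimate \eqref{eq:phavgpm} to the ``$-$'' phase, which reads
\[
\|p_h^--\mathrm{avg}_{\Omega_{h,i}^-}(p_h^-)\|_{\Omega^-}\le C\big(\|\nu^- D(\vv u_h^-)\|_{\Omega^-}+(\nu^- J_h^-(p_h^-,p_h^-))^{1/2}+\|\vv F_h\|_{-1,h}\big),
\]
valid for all $h\le h_0$, with $h_0$ and $C$ the mesh-independent and $\nu$-independent quantities furnished by Lemma~\ref{Lem688}.

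Second, I would invoke Lemma~\ref{lem:energyh}. The bound \eqref{eq:rhomuh} controls the full sum $\|\nu^-D(\vv u_h^-)\|_{\Omega^-}^2+\nu^-\vv J_h^-(\vv u_h^-,\vv u_h^-)+\nu^-J_h^-(p_h^-,p_h^-)$ by $C(\|\vv F_h\|_{-1,h}+\nu^-\|G_h^-\|_{-1,h}+\nu^+\|G_h^+\|_{-1,h})^2$; since every summand on the left is nonnegative, this in particular gives
\[
\|\nu^-D(\vv u_h^-)\|_{\Omega^-}+(\nu^-J_h^-(p_h^-,p_h^-))^{1/2}\le C\big(\|\vv F_h\|_{-1,h}+\nu^-\|G_h^-\|_{-1,h}+\nu^+\|G_h^+\|_{-1,h}\big).
\]
Inserting this into the previous display and absorbing the leftover $\|\vv F_h\|_{-1,h}$ into the right-hand side yields exactly \eqref{eq:phavgm}, with a constant depending only on $\Omega$ and $\Gamma$ and with the same threshold $h_0$.

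There is essentially no obstacle here: the estimate is a corollary, and the only point worth a word is that the two ingredients must hold below a single mesh-independent threshold, which is automatic by taking the smaller of the thresholds supplied by Lemma~\ref{Lem688} and Lemma~\ref{lem:energyh}. I note for contrast that the analogous bound for the ``$+$'' phase does \emph{not} follow from these two steps, since \eqref{eq:rhomuh} only bounds the $\Omega^-$ viscous stress; that case is genuinely harder and is precisely where the discrete extension operator of Lemma~\ref{LemmaExt} will be needed, mirroring the continuous argument of Lemma~\ref{lemma:eq:errorflux}.
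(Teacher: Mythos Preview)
Your proposal is correct and matches the paper's approach exactly: the paper states the lemma as ``an immediate consequence of Lemma~\ref{lem:energyh} and \eqref{eq:phavgpm}'' without further argument, and you have spelled out precisely that combination. One tiny remark: Lemma~\ref{lem:energyh} carries no mesh-size restriction, so the threshold $h_0$ comes solely from Lemma~\ref{Lem688}.
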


The next result is the discrete analogue of Lemma \ref{lemma:eq:errorflux}.
\begin{lemma}
	\label{lem:nupDuhp}
	Let $\vv u_h\in \vv V_h$ and $p_h\in M_h$ solve \eqref{eq:mixeddiscform}. Then, there exist $C,h_0>0$, depending only on $\Omega$ and $\Gamma$, such that for $h\le h_0$ it holds
	\begin{equation}
	\label{eq:nupDuhp}
	\|\nu^+D(\vv u_h^+)\|_{\Omega^+_h}+(\nu^+J_h^+(p_h^+,p_h^+))^{1/2}\le C\big(\nu^-\|G_h^-\|_{-1,h}+\nu^+\|G_h^+\|_{-1,h}+\|\vv F_h\|_{-1,h}\big).
	\end{equation}
\end{lemma}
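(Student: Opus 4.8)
The plan is to follow closely the proof of the continuous estimate in Lemma~\ref{lemma:eq:errorflux}, with the discrete extension operator $E_h$ of Lemma~\ref{LemmaExt} in place of the Stein extension $E$. Let $\vv w_h:=E_h\vv u_h^+$ if $|\partial\Omega^+\cap\partial\Omega|>0$ and $\vv w_h:=E_h\big(\vv u_h^+-\mathcal P^{RM}_{\Omega_h^+}(\vv u_h^+)\big)$ otherwise, and take the test function $\vv v_h:=(\vv w_h|_{\Omega_h^-},\vv w_h|_{\Omega_h^+})\in\vv V_h$. Three structural facts will make this choice behave as in the continuous case: (i) every $T\in\mathcal T_h^\Gamma$ lies in $\Omega_h^+$, where $\vv w_h$ coincides with $\vv u_h^+$ (minus a rigid body motion) by Lemma~\ref{LemmaExt}(a), so $\jump{\vv v_h}=\vv 0$ on $\Gamma$ and both the symmetric penalty term and the consistency term $-(\nu^-D(\vv u_h^-)\vv n,\jump{\vv v_h})_\Gamma$ in $a_h$ drop out; (ii) $D(\vv v_h^+)=D(\vv u_h^+)$, $\div\vv v_h^+=\div\vv u_h^+$ and $\jump{\partial_{\vv n}^\ell\vv v_h^+}=\jump{\partial_{\vv n}^\ell\vv u_h^+}$ for $\ell\ge1$, since rigid body motions have vanishing strain, vanishing divergence and continuous normal derivatives, whence $\vv J_h^+(\vv u_h,\vv v_h)=\vv J_h^+(\vv u_h,\vv u_h)$; (iii) by Lemma~\ref{LemmaExt}(b), Korn's and Poincar\'e's inequalities and the trace/inverse inequalities of Lemma~\ref{traceinverse} for the ghost-penalty edge terms, $\|\vv v_h\|_{1,h}^2\le C\big(\|D(\vv u_h^+)\|_{\Omega_h^+}^2+(\nu^+)^{-1}\vv J_h^+(\vv u_h,\vv u_h)\big)$.

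Testing the first equation of \eqref{eq:mixeddiscform} with $\vv v_h$ and using (i)--(ii) to isolate the leading term gives
\[
\begin{aligned}
\nu^+\|D(\vv u_h^+)\|_{\Omega^+}^2+\vv J_h^+(\vv u_h,\vv u_h)
&=\vv F_h(\vv v_h)+(p_h^-,\div\vv w_h)_{\Omega^-}+(p_h^+,\div\vv u_h^+)_{\Omega^+}\\
&\quad-(\nu^-D(\vv u_h^-),D(\vv w_h))_{\Omega^-}+(\nu^-D(\vv w_h)\vv n,\jump{\vv u_h})_\Gamma-\vv J_h^-(\vv u_h,\vv v_h).
\end{aligned}
\]
Write $\mathcal D:=\|\vv F_h\|_{-1,h}+\nu^-\|G_h^-\|_{-1,h}+\nu^+\|G_h^+\|_{-1,h}$. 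The $\vv F_h$-term is $\le\|\vv F_h\|_{-1,h}\,\|\vv v_h\|_{1,h}$, and the three terms on the second line are each $\le C\mathcal D\,\|\vv v_h\|_{1,h}$: here one uses Lemma~\ref{lem:energyh} — whose proof in fact bounds $\nu^-\|\vv u_h\|_{\vv V_h}^2$, hence also $\sum_{T\in\mathcal T_h^\Gamma}h_T^{-1}\|\nu^-\jump{\vv u_h}\|_{T_\Gamma}^2$ and $\nu^-\vv J_h^-(\vv u_h,\vv u_h)$ — together with moving each factor $\nu^-$ onto the interface jump, the inverse inequality \eqref{inverse2} applied to $D(\vv w_h)|_{T_\Gamma}$, and the Cauchy--Schwarz inequality for the bilinear form $\vv J_h^-$.

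The genuinely delicate part is the pressure contribution $(p_h^-,\div\vv w_h)_{\Omega^-}+(p_h^+,\div\vv u_h^+)_{\Omega^+}$, which I would treat by the discrete analogue of the decomposition \eqref{aux324}. Splitting $p_h^-=(p_h^--\mathrm{avg}_{\Omega_{h,i}^-}(p_h^-))+\mathrm{avg}_{\Omega_{h,i}^-}(p_h^-)$, the oscillatory part is bounded by \eqref{eq:phavgm} times $\|D(\vv w_h)\|_{\Omega^-}$. Since $\vv w_h\in\vv V$ vanishes on $\partial\Omega$, one has $(1,\div\vv w_h)_{\Omega^-}=-(1,\div\vv u_h^+)_{\Omega^+}$, and testing the second equation of \eqref{eq:mixeddiscform} with the constant pair $q_h=(|\Omega^-|^{-1},-|\Omega^+|^{-1})\in M_h$ yields $(1,\div\vv u_h^+)_{\Omega^+}=G_h^-(1)$, bounded by $C\|G_h^-\|_{-1,h}$; the $M_h$-constraint together with \eqref{aux222} then shows that the two average contributions combine, up to an $O(h^{1/2})\|p_h\|_\Omega$ remainder, into a term $\le C\,\|G_h^-\|_{-1,h}\|p_h\|_\Omega$. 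For $(p_h^+,\div\vv u_h^+)_{\Omega^+}$ I would split $p_h^+$ around $\mathrm{avg}_{\Omega^+}(p_h^+)$; testing the second equation with $q_h=(0,p_h^+-\mathrm{avg}_{\Omega^+}(p_h^+))\in M_h$ gives
\[
(p_h^+-\mathrm{avg}_{\Omega^+}(p_h^+),\div\vv u_h^+)_{\Omega^+}=-J_h^+(p_h^+,p_h^+)-G_h^+\big(p_h^+-\mathrm{avg}_{\Omega^+}(p_h^+)\big),
\]
which contributes a favourably signed $-J_h^+(p_h^+,p_h^+)$ and, after invoking \eqref{eq:phavgpm} for $\|p_h^+-\mathrm{avg}_{\Omega^+}(p_h^+)\|_{\Omega^+}$, leaves only products of $\mathcal D$ with $\|\nu^+D(\vv u_h^+)\|_{\Omega^+}$ and $(\nu^+J_h^+(p_h^+,p_h^+))^{1/2}$; the leftover average term is again $\le C\|G_h^-\|_{-1,h}\|p_h\|_\Omega$.

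Finally, collecting the estimates, multiplying through by $\nu^+$, and using Young's inequality, I would absorb into the left-hand side the $\nu^+\|D(\vv u_h^+)\|_{\Omega^+}^2$-terms, the $\nu^+J_h^+(p_h^+,p_h^+)$-terms, and — after rescaling \eqref{eq:phinfsup} by $(\nu^+)^{-1/2}$, which controls $\|p_h\|_\Omega$ by $\|\vv u_h\|_{1,h,\nu}+(\nu^-J_h^-(p_h^-,p_h^-))^{1/2}+(\nu^+J_h^+(p_h^+,p_h^+))^{1/2}+\mathcal D$ — the $\|p_h\|_\Omega$-contribution; for $h\le h_0$ the $O(h^{1/2})\|p_h\|_\Omega$ remainder is absorbed as well, and the estimate is transferred from $\Omega^+$ to $\Omega_h^+$ by the overlap inequality \eqref{eq:overlapuh} of Lemma~\ref{lem:overlap}. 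The main obstacle is the $\nu$-bookkeeping: every stray $\nu^-$ must be paired with the interface jump or $\Omega^-$-strain it multiplies so that Lemma~\ref{lem:energyh} applies with a constant that does not degenerate as $\nu^-/\nu^+\to0$; and the pressure-stabilization term $\nu^+J_h^+(p_h^+,p_h^+)$, which has no counterpart in the continuous Lemma~\ref{lemma:eq:errorflux}, cannot be bounded directly — it appears with the correct scaling only after the second discrete equation has been tested against a pressure supported in $\Omega^+$, and must then be fed back and absorbed together with the viscous term.
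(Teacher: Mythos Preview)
Your overall strategy --- define $\vv v_h$ via the discrete extension $E_h$, exploit that $\jump{\vv v_h}=0$ on $\Gamma$, $D(\vv v_h^+)=D(\vv u_h^+)$, and $\vv J_h^+(\vv u_h,\vv v_h)=\vv J_h^+(\vv u_h,\vv u_h)$, and control the $\Omega^-$-terms by Lemma~\ref{lem:energyh} --- is exactly the paper's approach, and your structural observations (i)--(iii) are correct.

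There is, however, a genuine $\nu$-bookkeeping gap in your treatment of the pressure average terms --- precisely the obstacle you flag at the end. You route the constant contributions through $G_h^-(1)$ and arrive at a term $C\|G_h^-\|_{-1,h}\|p_h\|_\Omega$. After inserting \eqref{eq:phinfsup}, which controls $\|p_h\|_\Omega$ by quantities including $\|\nu^+D(\vv u_h^+)\|_{\Omega^+}$ and $(\nu^+J_h^+(p_h^+,p_h^+))^{1/2}$, Young's inequality leaves a remainder of order $\nu^+\|G_h^-\|_{-1,h}^2$, which is \emph{not} dominated by $\mathcal D^2$ as $\nu^-/\nu^+\to0$ (recall $\mathcal D$ carries only $\nu^-\|G_h^-\|_{-1,h}$). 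The paper avoids this by testing the second equation of \eqref{eq:mixeddiscform} directly with $q_h=(0,p_h^+)$ and $q_h=(0,\alpha_h^-)$ --- both admissible because that equation extends from $M_h$ to $M_h+\text{const}$ --- obtaining
\[
(\div\,\vv u_h^+,p_h^+)_{\Omega^+}=-G_h^+(p_h^+)-J_h^+(p_h^+,p_h^+),\qquad (\div\,\vv w_h^-,\alpha_h^-)_{\Omega^-}=-G_h^+(\alpha_h^-).
\]
Every $G_h$-factor is then $\|G_h^+\|_{-1,h}$, and after multiplying through by $\nu^+$ one has $\nu^+\|G_h^+\|_{-1,h}\le\mathcal D$. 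Your argument is repaired by the trivial substitution $|G_h^-(1)|=|G_h^+(1)|\le C\|G_h^+\|_{-1,h}$, but as written it does not close uniformly in $\nu$.

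A secondary issue: the test function $q_h=(|\Omega^-|^{-1},-|\Omega^+|^{-1})$ does \emph{not} give $(1,\div\,\vv u_h^+)_{\Omega^+}=G_h^-(1)$ directly, because $b_h(\vv u_h,q_h)$ also contains $-|\Omega^-|^{-1}(\div\,\vv u_h^-,1)_{\Omega^-}$ and the interface term $|\Omega^-|^{-1}(\jump{\vv u_h}\cdot\vv n,1)_\Gamma$, neither of which vanishes. Testing with $(0,1)$ gives the identity in one line; the detour through \eqref{aux222} and the $O(h^{1/2})\|p_h\|_\Omega$ remainder is then unnecessary.
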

\begin{proof}
	We  first  {note that the rigid body motions belong to the velocity finite element space} {, and using Lemma~\ref{LemmaExt}, we consider the discrete extension $E_h:\vv V_h^+\to \vv V_h^{\rm bulk}$ and define $\vv w_h\in \vv V_h^{\rm bulk}$ by}
	%\begin{equation}\label{eq:discext}
	\[
	\vv w_h:=\begin{cases}E_h\vv u_h^+&\text{ if }|\partial\Omega\cap\partial\Omega_h^+|>0,\\E_h\big(\vv u_h^+-\mathcal P_{\Omega_h^+}^{RM}(\vv u_h^+)\big) &\text{ if }|\partial\Omega\cap\partial\Omega_h^+|=0.\end{cases}
	\]
	%\end{equation}
Let $h_0>0$ be sufficiently small such that if $\Omega^+$ is the inclusion, then $\partial\Omega\cap\partial\Omega_h^+=\emptyset$.
	Then, if $\Omega^+$ is not the inclusion, we have by the boundedness of the extension $E_h$ and by Korn's inequality that $\|\vv w_h\|_{1,\Omega}\le \|\vv u_h^+\|_{1,\Omega_h^+}\le C\|D(\vv u_h^+)\|_{\Omega_h^+}$.
On the other hand, if $\Omega^+$ is the inclusion, we have by the boundedness of the discrete extension $E_h$, Korn's inequality and since
\begin{equation}\label{aux756}
D(\vv w_h^+)=D(\vv u_h^+)  \quad \text{ in } \Omega_h^+,
\end{equation}
that $\|\vv w_h\|_{1,\Omega}\le C\|\vv u_h^+-\mathcal P_{\Omega_h^+}^{\mathrm{RM}}(\vv u_h^+)\|_{1,\Omega_h^+}\le C\|D(\vv u_h^+)\|_{\Omega_h^+}$. In both cases, it holds
	\begin{equation}\label{eq:boundext}
	\|\vv w_h\|_{1,\Omega}\le C\|D(\vv u_h^+)\|_{\Omega_h^+}.
	\end{equation}
	
	We let $\vv v_h \in \vv V_h$ be given by $\vv v_h=(\vv w_h|_{\Omega_h^-}, \vv w_h|_{\Omega_h^+})$. By the same arguments as in the proof of Lemma~\ref{Lem688}, one shows $\|\vv v_h\|_{1,h} \le C \|\vv w_h\|_{1,\Omega}$.  First equation from \eqref{eq:mixeddiscform} yields
\begin{alignat}{1}
\nu^+\|D(\vv u_h^+)\|_{\Omega^+}^2+ {\vv J_h^+(\vv u_h,\vv u_h)} = a_h(\vv u_h,\vv v_h)+R=& F_h(\vv v_h)-b_h(\vv v_h, p_h) +R, \label{988}
\end{alignat}
where
\begin{alignat*}{1}
R=-(\nu^-D(\vv u_h^-),D(\vv v_h))_{\Omega^-}-\big(\nu^-D(\vv v_h^-)\vv n^-,\jump{\vv u_h}\big)_{\Gamma}- {\vv J_h^-(\vv u_h,\vv v_h)}.
\end{alignat*}
We bound every term on the right hand side of \eqref{988}.  With the help of \eqref{inverse2} and \eqref{eq:overlapuh}, we get for the last term
\begin{equation}\label{987}
R \le C \Xi\, \|\vv  v_h\|_{1,h},
\end{equation}
where
\begin{equation*}
 \Xi:=\|\nu^-D(\vv u_h^-)\|_{\Omega^-}^2+\sum_{T\in\mathcal T_{h}^\Gamma}\frac{\|\nu^-\jump{\vv u_h}\|_{T_\Gamma}^2}{h_T}
+ \nu^-\vv J_h^-(\vv u_h,\vv u_h)
\end{equation*}
consists of terms already estimated in Lemma~\ref{lem:energyh}.
For the first term on the right-hand side of \eqref{988} we have
\begin{equation}\label{986}
\vv F_h(\vv v_h) \le \|\vv F_h\|_{-1,h} \|\vv v_h\|_{1,h}\le \|\vv F_h\|_{-1,h} \|\vv w_h\|_{1,\Omega}.
\end{equation}
It remains to estimate the second term on the right-hand side of \eqref{988}. Noting that \eqref{aux756} implies $\div\, \vv w_h^+=\div\, \vv u_h^+$ and using $\Omega^+\subset\Omega^+_h$, we get
\begin{equation*}
-b_h(\vv v_h, p_h)=(\div\, \vv v_h^-, p_h^-)_{\Omega^-}+(\div\, \vv v_h^+, p_h^+)_{\Omega^+}=(\div\, \vv w_h^-, p_h^-)_{\Omega^-}+(\div\, \vv u_h^+, p_h^+)_{\Omega^+}.
\end{equation*}
Setting $\alpha_h^-=\text{avg}_{\Omega_{h,i}^-}(p_h^-)$ and using that  $(\div\, \vv w_h, 1)_{\Omega}=0$, we have
\begin{alignat*}{1}
(\div\, \vv w_h^-, p_h^-)_{\Omega^-}&= (\div\, \vv w_h^-, p_h^- -\alpha_h^-)_{\Omega^-}+(\div\, \vv w_h^-, \alpha_h^-)_{\Omega^-}\\
&= (\div\, \vv w_h^-, p_h^- -\alpha_h^-)_{\Omega^-}- \alpha_h^- (\div\, \vv w_h^+,   {1} )_{\Omega^+} \\
&=(\div\, \vv w_h^-, p_h^- -\alpha_h^-)_{\Omega^-}-(\div\, \vv u_h^+, \alpha_h^- )_{\Omega^+} \\
&=(\div\, \vv w_h^-, p_h^- -\alpha_h^-)_{\Omega^-}- G_h^+(\alpha_h^-) \\
&\le \|\vv w_h\|_{1,\Omega^-} \| p_h^- -\alpha_h^-\|_{\Omega^-} + \|G_h^+\|_{-1,h} \|\alpha_h^-\|_{\Omega^+}.
\end{alignat*}
We also have from the second equation of \eqref{eq:mixeddiscform} with $q_h=(0, p_h^+)$
\begin{equation*}
(\div\, \vv u_h^+, p_h^+)_{\Omega^+} = -G_h^+(p_h^+)-J_h^+(p_h^+, p_h^+) \le  \|G_h^+\|_{-1,h}   \|p_h^+\|_{+,h} -J_h^+(p_h^+, p_h^+).
\end{equation*}
Now we see that
\begin{equation}\label{985}
-b_h(\vv v_h, p_h) \le   \|\vv w_h\|_{1,\Omega^-} \| p_h^- -\alpha_h^-\|_{\Omega^-} + \|G_h^+\|_{-1,h}  ( \|p_h^+\|_{+,h}+  \|\alpha_h^-\|_{\Omega^+} ) -J_h^+(p_h^+, p_h^+).
\end{equation}

We combine \eqref{985}, \eqref{986}, \eqref{987}, and \eqref{988} to get
\begin{equation}\label{aux853}
L  \le   (C\Xi+ \|\vv F_h\|_{-1,h}+ \| p_h^- -\alpha_h^-\|_{\Omega^-})  \nu^+ \|\vv w_h\|_{1,\Omega}+ \nu^+\|G_h^+\|_{-1,h}  ( \|p_h^+\|_{+,h}+  \|\alpha_h^-\|_{\Omega^+} ),
\end{equation}
where
\begin{equation*}
L=  {\|\nu^+ D(\vv u_h^+)\|_{\Omega^+}^2+\nu^+\vv J_h^+(\vv u_h, \vv u_h)
+ \nu^+J_h^+(p_h^+,p_h^+).
}
\end{equation*}
{It remains to estimate the solution-dependent terms on the right-hand side of \eqref{aux853}. Thanks to  \eqref{eq:phinfsup} we get}
\begin{alignat*}{1}
\|p_h^+\|_{+,h}+  \|\alpha_h^-\|_{\Omega^+}  & \le (\|p_h\|_{\Omega} +(\nu^+J_h^+(p_h^+, p_h^+) )^{1/2} )  \\
&\le  C \big(\|\vv u_h\|_{1, h, \nu}+ (\nu^- J_h^-(p_h^-,p_h^-))^{1/2}+(\nu^+ J_h^+(p_h^+,p_h^+))^{1/2}+\|\vv F_h\|_{-1,h}\big) \\
 &\le  C \big(\Xi+(\nu^- J_h^-(p_h^-,p_h^-))^{1/2}+\|\vv F_h\|_{-1,h}\big)+C \sqrt{L}.
\end{alignat*}
 {From \eqref{eq:boundext} we conclude} that  $\nu^+ \|\vv w_h\|_{1,\Omega}\le C \nu^+\|D(\vv u_h^+)\|_{\Omega^+}  \le  {C}\sqrt{L}$ and hence we have
\begin{equation*}
\begin{aligned}
L &\le  {C}(\Xi+\| p_h^- -\alpha_h^-\|_{\Omega^-}+(\nu^- J_h^-(p_h^-,p_h^-))^{1/2}+\|\vv F_h\|_{-1,h}+  \nu^+\|G_h^+\|_{-1,h}) \sqrt{L}\\
&\quad+   C \nu^+\|G_h^+\|_{-1,h} {(\|\vv F_h\|_{-1,h} +\Xi+(\nu^- J_h^-(p_h^-,p_h^-))^{1/2})}.
\end{aligned}
\end{equation*}
The result now follows after using \eqref{eq:phavgm} and \eqref{eq:rhomuh}.
\end{proof}
As an immediate consequence of Lemma~\ref{lem:nupDuhp} and \eqref{eq:phinfsup}, we have the following result.
\begin{theorem}
 \label{lem:stabh}
 Let $ (\vv u_h,p_h)\in \vv V_h\times M_h$ be a solution of the finite element method \eqref{eq:mixeddiscform}. Then, there exists $C,h_0>0$, independent of $h$, position of $\Gamma$ in the mesh and $\nu^\pm$, such that for $h \le h_0$, it holds
 %\begin{equation}\label{eq:stabh}
 \[
 \|\nu D(\vv u_h)\|_{\Omega}+\|p_h\|_{\Omega}\le C\big(\nu^-\|G_h^-\|_{-1,h}+\nu^+\|G_h^+\|_{-1,h}+\|\vv F_h\|_{-1,h}\big).
 \]
 %\end{equation}
\end{theorem}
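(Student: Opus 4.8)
Throughout this sketch, abbreviate the right-hand side by $\mathcal{R}:=\nu^-\|G_h^-\|_{-1,h}+\nu^+\|G_h^+\|_{-1,h}+\|\vv F_h\|_{-1,h}$, and take $h\le h_0$ with $h_0$ the smallest of the thresholds appearing in Lemmas~\ref{Lem688} and~\ref{lem:nupDuhp} and in Theorem~\ref{thm:infsupgo}. The plan is to assemble the estimate from the phase-wise bounds already established in Lemmas~\ref{lem:energyh} and~\ref{lem:nupDuhp} and then feed them into the discrete pressure inf-sup bound \eqref{eq:phinfsup}; no new idea is required, only careful bookkeeping of which quantities each lemma controls.

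First I would dispose of the viscous stress term. Since $\|\nu D(\vv u_h)\|_{\Omega}^2=\|\nu^-D(\vv u_h^-)\|_{\Omega^-}^2+\|\nu^+D(\vv u_h^+)\|_{\Omega^+}^2$, it suffices to bound each phase. For $\Omega^-$, estimate \eqref{eq:rhomuh} of Lemma~\ref{lem:energyh} gives directly $\|\nu^-D(\vv u_h^-)\|_{\Omega^-}\le C\mathcal{R}$; moreover, since the quantity $\nu^-\|\vv u_h\|_{\vv V_h}^2$ bounded in that proof also contains the interface-jump contribution, one obtains simultaneously
\[
\sum_{T\in\mathcal T_h^\Gamma}\frac{\|\nu^-\jump{\vv u_h}\|_{T_\Gamma}^2}{h_T}+\nu^-\vv J_h^-(\vv u_h^-,\vv u_h^-)+\nu^-J_h^-(p_h^-,p_h^-)\le C\mathcal{R}^2 .
\]
For $\Omega^+$, Lemma~\ref{lem:nupDuhp} gives $\|\nu^+D(\vv u_h^+)\|_{\Omega^+}\le\|\nu^+D(\vv u_h^+)\|_{\Omega_h^+}\le C\mathcal{R}$ together with $(\nu^+J_h^+(p_h^+,p_h^+))^{1/2}\le C\mathcal{R}$. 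Adding the two phases yields $\|\nu D(\vv u_h)\|_{\Omega}\le C\mathcal{R}$.

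Next I would control $\|p_h\|_\Omega$ through \eqref{eq:phinfsup}, which bounds it by $C$ times $\|\vv u_h\|_{1,h,\nu}+(\nu^-J_h^-(p_h^-,p_h^-))^{1/2}+(\nu^+J_h^+(p_h^+,p_h^+))^{1/2}+\|\vv F_h\|_{-1,h}$. The last three summands are already $\le C\mathcal{R}$ by the previous paragraph. For the first, unfold
\[
\|\vv u_h\|_{1,h,\nu}^2=\|\nu D(\vv u_h)\|_{\Omega}^2+\sum_{T\in\mathcal T_h^\Gamma}\frac{\|\nu^-\jump{\vv u_h}\|_{T_\Gamma}^2}{h_T}+\nu^-\vv J_h^-(\vv u_h^-,\vv u_h^-)+\nu^+\vv J_h^+(\vv u_h^+,\vv u_h^+);
\]
the first three terms are $\le C\mathcal{R}^2$ by what precedes, so the only term still to be accounted for is $\nu^+\vv J_h^+(\vv u_h^+,\vv u_h^+)$.

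This last quantity is the one point that calls for a little care, and I expect it to be the only nonroutine step: $\nu^+\vv J_h^+(\vv u_h^+,\vv u_h^+)$ does not appear in the \emph{statement} of Lemma~\ref{lem:nupDuhp}, but it is one of the three nonnegative terms making up the quantity $L$ in its proof (see around \eqref{988}), which is shown there to satisfy $L\le C\mathcal{R}^2$. Hence $\nu^+\vv J_h^+(\vv u_h^+,\vv u_h^+)\le C\mathcal{R}^2$, so $\|\vv u_h\|_{1,h,\nu}\le C\mathcal{R}$, and \eqref{eq:phinfsup} gives $\|p_h\|_\Omega\le C\mathcal{R}$. Combined with $\|\nu D(\vv u_h)\|_\Omega\le C\mathcal{R}$ this proves the theorem, every constant inheriting independence of $h$, $\nu^\pm$ and the interface position from the cited estimates. (If one prefers not to quote an intermediate quantity from a proof, the clean fix is to add $\|\nu^+D(\vv u_h^+)\|_{\Omega_h^+}^2+\nu^+\vv J_h^+(\vv u_h^+,\vv u_h^+)+\nu^+J_h^+(p_h^+,p_h^+)\le C\mathcal{R}^2$ to the conclusion of Lemma~\ref{lem:nupDuhp}, which its proof already establishes.)
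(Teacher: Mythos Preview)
Your proof is correct and follows the same route as the paper, which dispatches the theorem in one line as ``an immediate consequence of Lemma~\ref{lem:nupDuhp} and \eqref{eq:phinfsup}'' (together with the earlier Lemma~\ref{lem:energyh}). You are in fact more careful than the paper: you explicitly identify that the term $\nu^+\vv J_h^+(\vv u_h,\vv u_h)$ needed for $\|\vv u_h\|_{1,h,\nu}$ is not in the \emph{statement} of Lemma~\ref{lem:nupDuhp} but is part of the quantity $L$ bounded in its proof, and your suggested fix (strengthening the conclusion of that lemma) is exactly the right remedy.
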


\section{Finite element error estimates}
\label{sec4}
We use the stability results from the previous sections to obtain error estimates for both $\|\nu D(\vv u-\vv u_h)\|_{\Omega}$ and $\|p-p_h\|_{\Omega}$.
We assume that the solutions to the two-phase Stokes problem \eqref{eq:contform} sufficiently smooth in each subdomain. In particular, we assume that  $\vv u^\pm \in \vv H^{s+1}(\Omega^\pm)$ and $p^\pm\in \mathrm{H}^{k_p+1}(\Omega^\pm)$. In such a case there exist extensions of $\vv u$ and $p$, from $\Omega^\pm$ to $\Omega^\mp$, denoted by $\vv u_E^\pm$ and $p_E^\pm$ and with the property $\vv u_E^\pm|_{\Omega^\pm}=\vv u^\pm$ and $p_E^\pm|_{\Omega^\pm}=p^\pm$, such that $\vv u_E^\pm\in \vv H^{s+1}(\Omega)$, $p_E^\pm\in \mathrm{H}^{k_p+1}(\Omega)$,
\[
\|\vv u_E^\pm\|_{s+1,\Omega}\le C\|\vv u^\pm\|_{s+1,\Omega^\pm},\quad\text{and}\quad \|p_E^\pm\|_{k_p+1,\Omega}\le C\|p^\pm\|_{k_p+1,\Omega^\pm},
\]
where $C$ depends only on $\Omega$ and $\Gamma$. Further, we will identify $\vv u^\pm$ and $p^\pm$ with there extensions.

For the error analysis in addition to the augmented norm for the velocity \eqref{augmentednorm} we also define
\begin{equation*}
\|q_h\|_{h,\star}^2:=\|q_h\|_{h}^2+ \sum_{T\in\mathcal T_h^\Gamma} h_T (\|q_h^{-}\|_{T_\Gamma}^2+\|q_h^{+}\|_{T_\Gamma}^2).
\end{equation*}
Multiplying the first equation of \eqref{eq:contform} with $\vv v^\pm$, using that $\jump{\sigma(\vv u,p)\vv n}=\bm{\lambda}$, and taking into account the choice of the weights for the average $\{\vv v\}_\Gamma$ in the definition of $a_h$, we define $\vv F_h$ by
\begin{equation}
\label{eq:defFh}
\vv F_h(\vv v_h):=(\vv f^-,\vv v_h^-)_{\Omega^-}+(\vv f^+,\vv v_h^+)_{\Omega^+}+(\bm{\lambda},\vv v_h^+)_\Gamma.
\end{equation}
Then, we have the main result of this section.

\begin{theorem} \label{ThErr}
Let $(\vv u,p)\in \vv V\times M$ be a solution of \eqref{eq:mixedform} with $\vv f \in \vv L^2(\Omega)$, $\bm{\lambda}\in \vv L^2(\Gamma)$ and $g \in L_0^2(\Omega)$. Furthermore, let $(\vv u_h,p_h)$ be the approximation  that solves \eqref{eq:mixeddiscform} with  {$\vv F_h$ given by \eqref{eq:defFh}} and $G_h^\pm(q_h^\pm)=-(g, q_h^\pm)_{\Omega^\pm}$. Assume that $\vv u^{\pm}  \in \vv H^{s+1}(\Omega^\pm)$ and $p^\pm \in H^{k_p+1}(\Omega^\pm) $. Then there exists a constant $C$ independent of $\nu$, $h$, $\vv u$, $p$ such that
\begin{equation}\label{errEst}
\|\nu D(\vv u-\vv u_h)\|_{\Omega}+\|p-p_h\|_{\Omega} \le C \, \inf_{\vv w_h \in \vv V_h, r_h \in M_h} \left(\|\vv u- \vv w_h\|_{1, h, \nu, \star}+\|p-r_h\|_{h, \star}\right).
\end{equation}
\end{theorem}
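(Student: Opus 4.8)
The plan is a Strang-type (quasi-optimality) argument whose engine is the discrete stability estimate of Theorem~\ref{lem:stabh}. The first and decisive step is Galerkin orthogonality. Identifying $\vv u$ and $p$ with their extensions in $\vv H^{s+1}(\Omega)$ and $H^{k_p+1}(\Omega)$, I would show that
\[
a_h(\vv u,\vv v_h)+b_h(\vv v_h,p)=\vv F_h(\vv v_h),\qquad b_h(\vv u,q_h)-J_h(p,q_h)=G_h(q_h)
\]
for every $\vv v_h\in\vv V_h$, $q_h\in M_h$, with $\vv F_h$ from \eqref{eq:defFh} and $G_h^\pm(q_h^\pm)=-(g,q_h^\pm)_{\Omega^\pm}$. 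To obtain the first identity I would integrate by parts separately on $\Omega^+$ and $\Omega^-$, insert the momentum equation $-\div(\nu^\pm D(\vv u^\pm))+\nabla p^\pm=\vv f^\pm$ of \eqref{eq:contform}, and collect the surface contributions on $\Gamma$: the terms pairing with $\vv v_h^-$ cancel, while those pairing with $\vv v_h^+$ assemble into $(\jump{\sigma(\vv u,p)\vv n},\vv v_h^+)_\Gamma=(\bm\lambda,\vv v_h^+)_\Gamma$, thanks to the interface stress condition and the one-sided ($\Omega^-$-weighted) averaging built into $a_h$ and $b_h$. The condition $\jump{\vv u}=\vv0$ annihilates the penalty term and the second Nitsche term, and, together with $\div\vv u=g$ in $\Omega$, gives the second identity. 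Finally, since the extensions lie in $\vv H^{s+1}$ and $H^{k_p+1}$, all normal-derivative jumps of the relevant orders vanish across every interior face (including those of $\mathcal T_h^\Gamma$ lying outside $\Omega^\pm$, where the extension is used), so $\vv J_h(\vv u,\cdot)=0$ and $J_h(p,\cdot)=0$.

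Next, fixing arbitrary $\vv w_h\in\vv V_h$, $r_h\in M_h$ and setting $\vv e_h:=\vv u_h-\vv w_h\in\vv V_h$, $\varepsilon_h:=p_h-r_h\in M_h$, subtracting \eqref{eq:mixeddiscform} from the orthogonality relations shows that $(\vv e_h,\varepsilon_h)$ solves a problem of the form \eqref{eq:mixeddiscform} with data $\widetilde{\vv F}_h(\vv v_h):=a_h(\vv u-\vv w_h,\vv v_h)+b_h(\vv v_h,p-r_h)$ and $\widetilde G_h(q_h):=b_h(\vv u-\vv w_h,q_h)-J_h(p-r_h,q_h)$. I would then check the structural hypotheses needed by Theorem~\ref{lem:stabh}: $\widetilde G_h$ splits as $\widetilde G_h^-(q_h^-)+\widetilde G_h^+(q_h^+)$, because the $\Gamma$-term of $b_h$ pairs only with $q_h^-$ and $J_h$ is block-diagonal; and $\widetilde G_h(1)=0$, which follows from $g\in L_0^2(\Omega)$, the divergence theorem on $\Omega^\pm$, and $\jump{\vv u}=\vv0$. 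Theorem~\ref{lem:stabh} then yields
\[
\|\nu D(\vv e_h)\|_{\Omega}+\|\varepsilon_h\|_{\Omega}\le C\big(\nu^-\|\widetilde G_h^-\|_{-1,h}+\nu^+\|\widetilde G_h^+\|_{-1,h}+\|\widetilde{\vv F}_h\|_{-1,h}\big).
\]

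It remains to bound the three dual norms by the approximation quantities. For $\widetilde{\vv F}_h$ I would use the continuity bound \eqref{boundahsmooth} for $a_h$ on the sum space (this is exactly why the augmented norm $\|\cdot\|_{1,h,\nu,\star}$ appears), together with $|b_h(\vv v_h,q)|\le C\|q\|_{h,\star}\|\vv v_h\|_{1,h}$, obtained from $\div=\tr D$ and Cauchy--Schwarz on the $\Gamma$-term with the $h_T$-weights of $\|\cdot\|_{h,\star}$. For $\nu^\pm\|\widetilde G_h^\pm\|_{-1,h}$: the volume terms $(\div(\vv u^\pm-\vv w_h^\pm),q_h^\pm)_{\Omega^\pm}$ produce $\|\nu^\pm D(\vv u^\pm-\vv w_h^\pm)\|_{\Omega^\pm}\|q_h^\pm\|_{\pm,h}/\nu^\pm\le\|\vv u-\vv w_h\|_{1,h,\nu,\star}\|q_h^\pm\|_{\pm,h}/\nu^\pm$; the stabilization term $J_h^\pm(p-r_h,q_h^\pm)/\nu^\pm$ is controlled directly by $\|p-r_h\|_{h,\star}\|q_h^\pm\|_{\pm,h}/\nu^\pm$ since $\jump{\partial_n^\ell p^\pm}=0$; and the interface term $(\jump{\vv u-\vv w_h}\cdot\vv n,q_h^-)_\Gamma=-(\jump{\vv w_h}\cdot\vv n,q_h^-)_\Gamma$ is handled with the inverse trace inequality \eqref{inverse2} and the overlap Lemma~\ref{lem:overlap} to pass from $\|q_h^-\|_{\Omega_h^-}$ back to $\|q_h^-\|_{-,h}$, matching it against the $\Gamma$-jump contribution already present in $\|\vv u-\vv w_h\|_{1,h,\nu,\star}$. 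Multiplying through by $\nu^\pm$ removes the $1/\nu^\pm$ factors, so all three terms are bounded by $C(\|\vv u-\vv w_h\|_{1,h,\nu,\star}+\|p-r_h\|_{h,\star})$. Combining with $\|\nu D(\vv u-\vv w_h)\|_{\Omega}\le\|\vv u-\vv w_h\|_{1,h,\nu,\star}$, $\|p-r_h\|_{\Omega}\le\|p-r_h\|_{h,\star}$, a triangle inequality, and taking the infimum over $\vv w_h$ and $r_h$, gives \eqref{errEst} for $h$ below the threshold $h_0$ of Theorem~\ref{lem:stabh}.

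I expect the main obstacle to be the consistency step: making the interface integration-by-parts bookkeeping agree precisely with the asymmetric ($\Omega^-$-weighted) averaging hard-wired into $a_h$ and $b_h$, and confirming that the assumed regularity annihilates \emph{all} stabilization contributions, including those on faces of $\mathcal T_h^\Gamma$ lying outside $\Omega^\pm$ where the smooth extension must be invoked. A secondary technical point is the uniform-in-$\nu$ and uniform-in-cut-position treatment of the $\Gamma$-term of $\widetilde G_h^-$, which must be absorbed into $\|\vv u-\vv w_h\|_{1,h,\nu,\star}$ through the overlap lemma.
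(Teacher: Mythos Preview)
Your proposal is correct and follows essentially the same route as the paper: establish consistency of $(\vv u,p)$ in the discrete scheme, subtract to obtain an error problem for $(\vv u_h-\vv w_h,p_h-r_h)$ of the form \eqref{eq:mixeddiscform}, apply the stability Theorem~\ref{lem:stabh}, bound the resulting dual norms by $\|\vv u-\vv w_h\|_{1,h,\nu,\star}+\|p-r_h\|_{h,\star}$ (using \eqref{boundahsmooth} for $a_h$), and finish with the triangle inequality. You are in fact more explicit than the paper on two points the authors leave implicit: the verification that $\widetilde G_h(1)=0$ (needed for Theorem~\ref{lem:stabh}), and the use of the inverse trace inequality together with Lemma~\ref{lem:overlap} to absorb $\sum_T h_T\|q_h^-\|_{T_\Gamma}^2$ into $\|q_h^-\|_{-,h}^2$ when bounding the $\Gamma$-term of $\widetilde G_h^-$.
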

\begin{proof}
As discussed above we let $\vv u^\pm $ and $p^{\pm}$ be the extensions of the same functions to the entire $\Omega$. Then,  {by using \eqref{eq:defFh}}, we one easily checks the consistency result:
\begin{alignat*}{2}
a_h(\vv u,\vv v_h)+b_h(\vv v_h,p)&=  \vv F_h(\vv  v_h),  \quad && \text{ for all } \vv v_h \in \vv V_h, \\
b_h(\vv u,q_h)-J_h(p,q_h)&=G_h(q_h),  \quad   && \text{ for all } q_h \in M_h,
\end{alignat*}
Hence, we have for an arbitrary $\vv w_h \in \vv V_h$ and $r_h \in Q_h$ the following consistency result:
\begin{alignat}{2} \label{eq:errorform}
a_h(\vv u_h-\vv w_h,\vv v_h) +  b_h(\vv v_h,p_h-r_h) & =  \vv L_h(\vv v_h) \quad &&   \text{ for all } \,\vv v_h\in \vv V_h,\\
b_h(\vv u_h-\vv w_h,q_h) -  J_h(p_h-r_h,q_h)  & =  Q_h(q_h) \quad &&  \text{ for all } \,q_h\in M_h.
\end{alignat}
 where
 \begin{equation*}
 \vv L_h(\vv v_h):=a_h(\vv u-\vv w_h,\vv v_h)+b_h(\vv v_h,p-r_h),\quad\quad Q_h(q_h):=Q_h^-(q_h)+Q_h^+(q_h),
 \end{equation*}
and
\begin{alignat*}{1}
Q_h^-(q_h)&:=-( q_h^-, \div\,(\vv u^--\vv w_h^-))_{\Omega^{-}}-( q_h^-, \jump{\vv u-\vv w_h}\cdot \vv n^-)_{\Gamma} -J_h^-(p^--r_h^-,q_h^-), \\
Q_h^{+}(q_h)&:= -(q_h^+,\div(\vv u^+-\vv w_h^+))_{\Omega^+}-J_h^+(p^+-r_h^+,q_h^+).
\end{alignat*}
We can easily show, using for example \eqref{boundahsmooth}, the following bound
\begin{equation*}
|\vv L_h(\vv v_h)| \le (\|\vv u- \vv w_h\|_{1, h, \nu, \star}+\|p-r_h\|_{h, \star} ) \|\vv v_h\|_{1, h} ,
\end{equation*}
which implies
\begin{equation*}
\|\vv L_h\|_{-1,h} \le  (\|\vv u- \vv w_h\|_{1, h, \nu, \star}+\|p-r_h\|_{h, \star} ) .
\end{equation*}
Similarly,
\begin{alignat*}{1}
    \nu^\pm |Q_h^{\pm}(q_h)| \le   C (\|\vv u- \vv w_h\|_{1, h, \nu, \star}+\|p-r_h\|_{h, \star} ) \|q_h^\pm\|_{\pm,h}.
\end{alignat*}
Hence,
\begin{equation*}
\nu^- \|Q_h^-\|_{-1,h} +\nu^+ \|Q_h^+\|_{-1,h}\,  {\le}\, C (\|\vv u- \vv w_h\|_{1, h, \nu, \star}+\|p-r_h\|_{h, \star} ).
\end{equation*}
The result follows after applying Theorem~\ref{lem:stabh}  {and triangle inequality}.
\end{proof}

Using \eqref{trace1} and \eqref{trace2} with standard  {interpolation properties of finite element functions and the definition of the norms of the right-hand side of \eqref{errEst}} the next results follows from the theorem.
\begin{corollary}	
	Under the same assumptions as in  Theorem~\ref{ThErr}, it holds:
	\begin{multline}
	\|\nu D(\vv u-\vv u_h)\|_{\Omega}+\|p-p_h\|_{\Omega}\\
	\le C h^{\min\{k_u,k_p+1\}}\big(\|\nu \vv u\|_{k_u+1,\Omega}+ \|p\|_{k_p+1,\Omega}\big)
	+Ch^{k_u}\sum_{\ell=2}^{s+1-k_u} h^{\ell-1} \|\nu\vv u\|_{k_u+\ell,\Omega}\,,
\label{eq:errorrates}
	\end{multline}
 {with a constant $C$ independent of $\nu$, $h$ and the position of the interface in the background mesh.}		The solution norms on the right-hand side of \eqref{eq:errorrates} are the norms in the broken Sobolev spaces  ${H}^\ell(\Omega^-)\times{H}^\ell(\Omega^+)$,
 \begin{equation*}
\label{eq:notation2qh} \|q\|_{\ell,\Omega}^2=\|q\|_{\ell,\Omega^-}^2+\|q\|_{\ell,\Omega^+}^2,\quad\text{for}~q\in{H}^\ell(\Omega^-)\times{H}^\ell(\Omega^+),
\end{equation*}
and similar for vector functions from ${H}^\ell(\Omega^-)^d\times{H}^\ell(\Omega^+)^d$.		
\end{corollary}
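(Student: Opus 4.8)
The plan is to deduce the corollary directly from the quasi-optimality bound \eqref{errEst} of Theorem~\ref{ThErr} by supplying one convenient pair of interpolants $\vv w_h\in\vv V_h$, $r_h\in M_h$ and estimating the two terms on its right-hand side by elementwise polynomial approximation together with the cut-element trace inequalities \eqref{trace1}--\eqref{trace2}. First I would take $\vv w_h=(\vv w_h^-,\vv w_h^+)$, where $\vv w_h^\pm$ is the restriction to $\Omega_h^\pm$ of a Scott--Zhang type interpolant of the Stein extension $\vv u_E^\pm\in\vv H^{s+1}(\Omega)$ into $\vv V_h^{\rm bulk}$; since $\vv W_h^{k_u}\subset\vv V_h^{\rm bulk}$ this operator satisfies the standard elementwise estimates $\|\vv u_E^\pm-\vv w_h^\pm\|_{m,T}\le C\,h_T^{k_u+1-m}\|\vv u_E^\pm\|_{k_u+1,T}$ for $0\le m\le k_u+1$, together with the analogous higher-order estimates whenever the space admits higher polynomial degrees. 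Similarly I would take $r_h^\pm$ to be the restriction to $\Omega_h^\pm$ of an interpolant of $p_E^\pm\in\mathrm H^{k_p+1}(\Omega)$ into $M_h^{\rm bulk}$ (the $L^2$-projection in the discontinuous-pressure case), satisfying $\|p_E^\pm-r_h^\pm\|_{m,T}\le C\,h_T^{k_p+1-m}\|p_E^\pm\|_{k_p+1,T}$, and then subtracting a single global constant --- of size $O(h^{k_p+1}\|p\|_{k_p+1,\Omega})$, hence harmless, and not affecting any jump term --- so that $r_h\in M_h$. Because $\vv w_h$ is a pair of finite element functions with one component per subdomain it lies in $\vv V_h$, and throughout one re-expresses local seminorms of the extensions by the subdomain norms of $\vv u,p$ via the Stein stability $\|\vv u_E^\pm\|_{s+1,\Omega}\le C\|\vv u^\pm\|_{s+1,\Omega^\pm}$ and $\|p_E^\pm\|_{k_p+1,\Omega}\le C\|p^\pm\|_{k_p+1,\Omega^\pm}$.

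With this choice I would estimate $\|\vv u-\vv w_h\|_{1,h,\nu,\star}$ term by term following \eqref{augmentednorm}. The bulk term $\|\nu D(\vv u-\vv w_h)\|_\Omega$ is bounded by the interpolation estimate with $m=1$, giving $h^{k_u}\|\nu\vv u\|_{k_u+1,\Omega}$ after keeping the scalar weights $\nu^\pm$ on each side and using $\nu^-\le\nu^+$. For the interface jump term one uses $\jump{\vv u}=\vv0$ on $\Gamma$: on $T_\Gamma\subset T$ one has $\vv u_E^+=\vv u_E^-$, hence $\jump{\vv u-\vv w_h}|_{T_\Gamma}=(\vv u_E^+-\vv w_h^+)|_{T_\Gamma}-(\vv u_E^--\vv w_h^-)|_{T_\Gamma}$, and each one-sided term is controlled with \eqref{trace2} and interpolation so that the weight $h_T^{-1}$ is exactly absorbed; the augmentation term $\sum_T h_T\|\nu^-D(\vv u^--\vv w_h^-)\|_{T_\Gamma}^2$ is handled identically via \eqref{trace1}. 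For the normal-derivative jump terms on $e\in\mathcal E_h^{\Gamma,\pm}$ one uses that $\vv u_E^\pm\in\vv H^{s+1}(\Omega)$ has single-valued normal traces of all derivatives up to order $s$ across interior faces, so that $\jump{\partial_{\vv n}^\ell\vv u_E^\pm}=\vv0$ there and $\jump{\partial_{\vv n}^\ell(\vv u_E^\pm-\vv w_h^\pm)}$ is bounded by the two one-sided approximation errors $\|\partial_{\vv n}^\ell(\vv u_E^\pm-\vv w_h^\pm)\|_{e\cap\partial T}$, each controlled by \eqref{trace1} and interpolation; summing the weights $|e|^{2\ell-1}$ (with $|e|\sim h_T$ by shape regularity, and $h_{K_T^\pm}\sim h_T$) collapses the $h$-powers and yields the leading term $h^{k_u}\|\nu\vv u\|_{k_u+1,\Omega}$ together with the higher-order tail $h^{k_u}\sum_{\ell=2}^{s+1-k_u}h^{\ell-1}\|\nu\vv u\|_{k_u+\ell,\Omega}$ coming from the higher normal-jump orders when the extra regularity of $\vv u_E^\pm$ is exploited. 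The pressure norm $\|p-r_h\|_{h,\star}$ is treated the same way: the $L^2$ parts give $h^{k_p+1}\|p\|_{k_p+1,\Omega}$, and the terms $|e|^{2\ell+1}\|\jump{\partial_{\vv n}^\ell(p^\pm-r_h^\pm)}\|_e^2$ for $0\le\ell\le k_p$, together with $\sum_T h_T\|(p-r_h)^\pm\|_{T_\Gamma}^2$, all contribute at order $h^{k_p+1}\|p\|_{k_p+1,\Omega}$ by \eqref{trace1}--\eqref{trace2} and interpolation. Plugging these two bounds into \eqref{errEst} and using $h^{k_u}\|\nu\vv u\|_{k_u+1,\Omega}+h^{k_p+1}\|p\|_{k_p+1,\Omega}\le C\,h^{\min\{k_u,k_p+1\}}(\|\nu\vv u\|_{k_u+1,\Omega}+\|p\|_{k_p+1,\Omega})$ for $h\le1$ gives \eqref{eq:errorrates}.

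The step that needs the most care is the treatment of the face and interface jump terms. All constants must be independent of the position of $\Gamma$ in the background mesh, which is exactly what the cut-element trace and inverse estimates of Lemma~\ref{traceinverse} provide; in particular one must avoid any bound whose constant degrades as $|T_\Gamma|/|T|$ or $|e\cap\Omega^\pm|/|e|$ becomes small. One must also be attentive to the fact that on the faces of $\mathcal E_h^{\Gamma,\pm}$ lying near $\Gamma$ the components $\vv w_h^+$ and $\vv w_h^-$ come from interpolating two different extensions, so the vanishing of the relevant jumps has to be read off from the $\vv H^{s+1}$-regularity of each $\vv u_E^\pm$ separately, not from any global continuity of $\vv w_h$. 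The remaining bookkeeping --- tracking the integer ranges of $\ell$ and keeping the scalar weights $\nu^\pm$ in the correct place so that the final estimate involves $\|\nu\vv u\|$ rather than $\|\sqrt{\nu}\,\vv u\|$ --- is routine.
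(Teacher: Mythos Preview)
Your proposal is correct and is precisely the argument the paper has in mind: the paper itself gives only a one-line justification (``using \eqref{trace1} and \eqref{trace2} with standard interpolation properties of finite element functions and the definition of the norms''), and you have faithfully unpacked that sentence by choosing Scott--Zhang interpolants of the Stein extensions and bounding each constituent of $\|\cdot\|_{1,h,\nu,\star}$ and $\|\cdot\|_{h,\star}$ with the cut-element trace inequalities and local approximation estimates. Your identification of the higher-order tail $\sum_{\ell=2}^{s+1-k_u}h^{\ell-1}\|\nu\vv u\|_{k_u+\ell,\Omega}$ as arising from the normal-jump terms of order $\ell>k_u$ (where the interpolant no longer gains powers of $h$ and one is left with bare derivatives of $\vv u_E^\pm$) is exactly right; note only that the augmentation terms on $T_\Gamma$ use \eqref{trace2} rather than \eqref{trace1}.
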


\section{Numerical experiments}
\label{sec5}
\subsection*{Example 1} Consider the squared domain $\Omega:=(-1,1)\times(-1,1)$ and the embedded interface $\Gamma:x_1^2+x_2^2=R^2$ for $R=1/\sqrt{\pi}$. We define $\Omega^{-}=\{\vv x\in\mathbb{R}^2\,:\,|\vv x|<R\}$, and $\Omega^{+}=\Omega\setminus\overline{\Omega}^{+}$, and choose the data $\vv f\in\mathbf L^2(\Omega)$ so that the exact solution $(\vv u,p)$ is given for all $\vv x=(x_1,x_2)$ by
\begin{equation}
\label{eq:example1}
\vv u(\vv x)=\begin{cases}\dfrac{R^2-|\vv x|^2}{\nu^-}\bigg(\begin{array}{c}-x_2\\x_1\end{array}\bigg),&|\vv x|<R\\[2ex]\dfrac{R^2-|\vv x|^2}{\nu^+}\bigg(\begin{array}{c}-x_2\\x_1\end{array}\bigg),&|\vv x|\ge R\end{cases},\quad\text{and}\quad p(\vv x)=x_2^2-x_1^2,
\end{equation}
We observe that $\jump{\vv u}=\vv 0$ and $\jump{\nu D(\vv u)}=0$, and since $p$ is continuous, we get $\jump{\sigma(\vv u,p)\vv n}=\vv 0$. Also, $\vv u$ is divergence free. We will show that the errors $\|\nu\,D(\vv u-\vv u_h)\|_\Omega$ and $\|p-p_h\|_\Omega$ are independent of $\nu$. In order to do this, we consider a uniform diagonal triangular decomposition of $\Omega$, and test the code with two pairs of spaces $\vv V_h^{\rm bulk}\times M_h^{\rm bulk}$ that satisfy Assumption~\ref{Ass3}. These are the Mini-element and the pair $P_2$ -- $P_0$. We choose a mesh with $N=160092$ degrees of freedom for the Mini-element and $N=164697$ degrees of freedom for the pair $P_2$ -- $P_0$.

We denote the errors by ${\tt e}(\vv u):=\|\nu D(\vv u-\vv u_h)\|_{\Omega}$ and ${\tt e}(p):=\|p-p_h\|_{\Omega}$ and compute them experimentally, with decreasing values of $\nu^-$ and increasing values of $\nu^+$. As a good balance between stability and conditioning, we set the stabilization parameters to $\gamma=25$, $\gamma_{\vv u}^\pm=15$ and $\gamma_p^\pm=20$ for the Mini-element and $\gamma=20$, $\gamma_{\vv u}^\pm=10$ and $\gamma_p^\pm=15$ for the pair $P_2$ -- $P_0$.
The numerical results are summarized in Table~\ref{t:ex1a}.

\begin{table}[!ht]
	\begin{center}
		\begin{tabular}{c|c||c|c||c|c}
			\multicolumn{2}{c||}{Parameters} &
			\multicolumn{2}{|c||}{$P_2$ -- $P_0$} &
			\multicolumn{2}{c}{Mini-element} \\
			\hline
			$\nu^-$ & $\nu^+$ & ${\tt e}(\vv u)$ & ${\tt e}(p)$ & ${\tt e}(\vv u)$ & ${\tt e}(p)$\\
			\hline
			$1E-01$ & $ 1E+01$ & $0.01708$ & $0.00448$ & $0.06200$ & $0.00409$\\
			$1E-02$ & $ 1E+02$ & $0.01708$ & $0.00448$ & $0.06200$ & $0.00409$\\
			$1E-03$ & $ 1E+03$ & $0.01708$ & $0.00448$ & $0.06200$ & $0.00409$\\
			$1E-04$ & $ 1E+04$ & $0.01708$ & $0.00448$ & $0.06200$ & $0.00409$
		\end{tabular}
	\end{center}	
	\caption{Example 1 with $\Omega^-$ completely interior.  Errors are shown for a fixed mesh, increasing values of $\nu^+$ and decreasing values of $\nu^-$. \label{t:ex1a}}	
\end{table}
From Table~\ref{t:ex1a}, we observe that the errors ${\tt e}(\vv u)$ and ${\tt e}(p)$ remain unchanged for a fixed mesh when $\nu^-$ decreases and $\nu^+$ increases.

\medskip

Now we switch the role of $\Omega^+$ and $\Omega^-$, and define $\Omega^{+}=\{\vv x\in\mathbb{R}^2\,:\,|\vv x|<R\}$, $\Omega^{-}=\Omega\setminus\overline{\Omega}^{+}$.  Observe that $\Omega^+$ is now the inclusion. We choose the data $\vv f\in \vv L^2(\Omega)$ so that the exact solution $(\vv u,p)$ is given by
\begin{equation*}
\vv u(\vv x)=\begin{cases}\dfrac{R^2-|\vv x|^2}{\nu^+}\bigg(\begin{array}{c}-x_2\\x_1\end{array}\bigg),&|\vv x|<R\\[2ex]\dfrac{R^2-|\vv x|^2}{\nu^-}\bigg(\begin{array}{c}-x_2\\x_1\end{array}\bigg),&|\vv x|\ge R\end{cases},\quad\text{and}\quad p(\vv x)=x_2^2-x_1^2.
\end{equation*}
We choose a mesh with $N=160092$ degrees of freedom for the Mini-element and $N=164697$ degrees of freedom for the pair $P_2$ -- $P_0$, and as a good balance between stability and conditioning we set the stabilization parameters to $\gamma=25$, $\gamma_{\vv u}^\pm=15$ and $\gamma_{p}^\pm=20$ for the Mini-element, and $\gamma=20$, $\gamma_{\vv u}^\pm=10$ and $\gamma_p^\pm=15$ for the pair $P_2$ -- $P_0$, consider decreasing values of $\nu^-$ and increasing values of $\nu^+$, and summarize the results in Table~\ref{t:ex1b}.

\begin{table}[!ht]
	\begin{center}
		\begin{tabular}{c|c||c|c||c|c}
			\multicolumn{2}{c||}{Parameters} &
			\multicolumn{2}{|c||}{$P_2$ -- $P_0$} &
			\multicolumn{2}{c}{Mini-element} \\
			\hline
			$\nu^-$ & $\nu^+$ & ${\tt e}(\vv u)$ & ${\tt e}(p)$ & ${\tt e}(\vv u)$ & ${\tt e}(p)$\\
			\hline
			$1E-01$ & $ 1E+01$ & $0.01210$ & $0.00282$ & $0.06200$ & $0.00401$\\
			$1E-02$ & $ 1E+02$ & $0.01210$ & $0.00282$ & $0.06200$ & $0.00401$\\
			$1E-03$ & $ 1E+03$ & $0.01210$ & $0.00282$ & $0.06200$ & $0.00401$\\
			$1E-04$ & $ 1E+04$ & $0.01210$ & $0.00282$ & $0.06200$ & $0.00401$
		\end{tabular}
	\end{center}
	\caption{Example 1 with $\Omega^+$ completely interior.  Errors are shown for a fixed mesh, increasing values of $\nu^+$ and decreasing values of $\nu^-$. \label{t:ex1b}}	
\end{table}
Similarly, we observe that the errors ${\tt e}(\vv u)$ and ${\tt e}(p)$ remain unchanged for a fixed mesh when $\nu^-$ decreases and $\nu^+$ increases.
\subsection*{Example 2}
We consider the same exact solution $(\vv u,p)$ given by \eqref{eq:example1}, and the finite element errors
\[
{\tt e}(\vv u):=\|\nu D(\vv u-\vv u_h)\|_\Omega,\quad{\tt e}( p):=\|p-p_h\|_\Omega,\quad\text{and} \quad{\tt e}(\vv u,p)^2=\|\nu D(\vv u-\vv u_h)\|_\Omega^2+\|p-p_h\|_\Omega^2.
\]
We test the method for $P_2$ -- $P_0$ bulk spaces and fixed viscosity and stabilization parameters $\nu^-=0.5$, $\nu^+=20$, $\gamma=20$, $\gamma_{\vv u}^\pm=10$ and $\gamma_p^\pm=15$. We consider a sequence of uniform triangular meshes with decreasing mesh size.
The experimental rates of convergence are computed as
\[
	\frac{\log(\Phi_j/\Phi_{j-1})}{\log(h_j/h_{j-1})},
\]
where $\Phi_j$ is the corresponding error norm at mesh level $j$.
The error norms and experimental rates are shown in Table~\ref{t:ex2a}. Also, we show plots of the approximate solution $(\vv u_h,p_h)$
in Figure~\ref{fig:ex2}.
\begin{table}[!ht]
	\begin{center}
		\begin{tabular}{c|c|c|c|c|c|c}
			dofs & ${\tt e}(\vv u)$ & ${\tt r}(\vv u)$ & ${\tt e}(p)$ & ${\tt r}(p)$ & ${\tt e}(\vv u,p)$ & ${\tt r}(\vv u,p)$\\
			\hline
			$815$ & $5.9E-01$ & $-$ & $9.8E-01$ & $-$ & $1.1E+00$ & $-$ \\\hline
			$2867$ & $2.6E-01$ & $ 1.201$ & $2.2E-01$ & $ 2.157$ & $3.4E-01$ & $ 1.761$ \\\hline
			$10867$ & $1.1E-01$ & $ 1.163$ & $1.0E-01$ & $ 1.106$ & $1.5E-01$ & $ 1.138$ \\\hline
			$42199$ & $5.2E-02$ & $ 1.139$ & $4.6E-02$ & $ 1.154$ & $6.9E-02$ & $ 1.145$ \\\hline
			$166303$ & $2.5E-02$ & $ 1.039$ & $2.2E-02$ & $ 1.035$ & $3.4E-02$ & $ 1.037$ \\\hline
			$660243$ & $1.3E-02$ & $ 1.010$ & $1.1E-02$ & $ 0.998$ & $1.7E-02$ & $ 1.005$ \\\hline
			$1481863$ & $8.4E-03$ & $ 1.004$ & $7.5E-03$ & $ 0.996$ & $1.1E-02$ & $ 1.000$
		\end{tabular}
	\end{center}
	\caption{Example 2, errors for a sequence of uniform meshes, and fixed values of $\nu^\pm$. The solution $(\vv u_h,p_h)$ is approximated with $P_2$ -- $P_0$ elements.\label{t:ex2a}}	
\end{table}

\begin{figure}[!ht]
	\begin{center}
		\includegraphics[scale=.53]{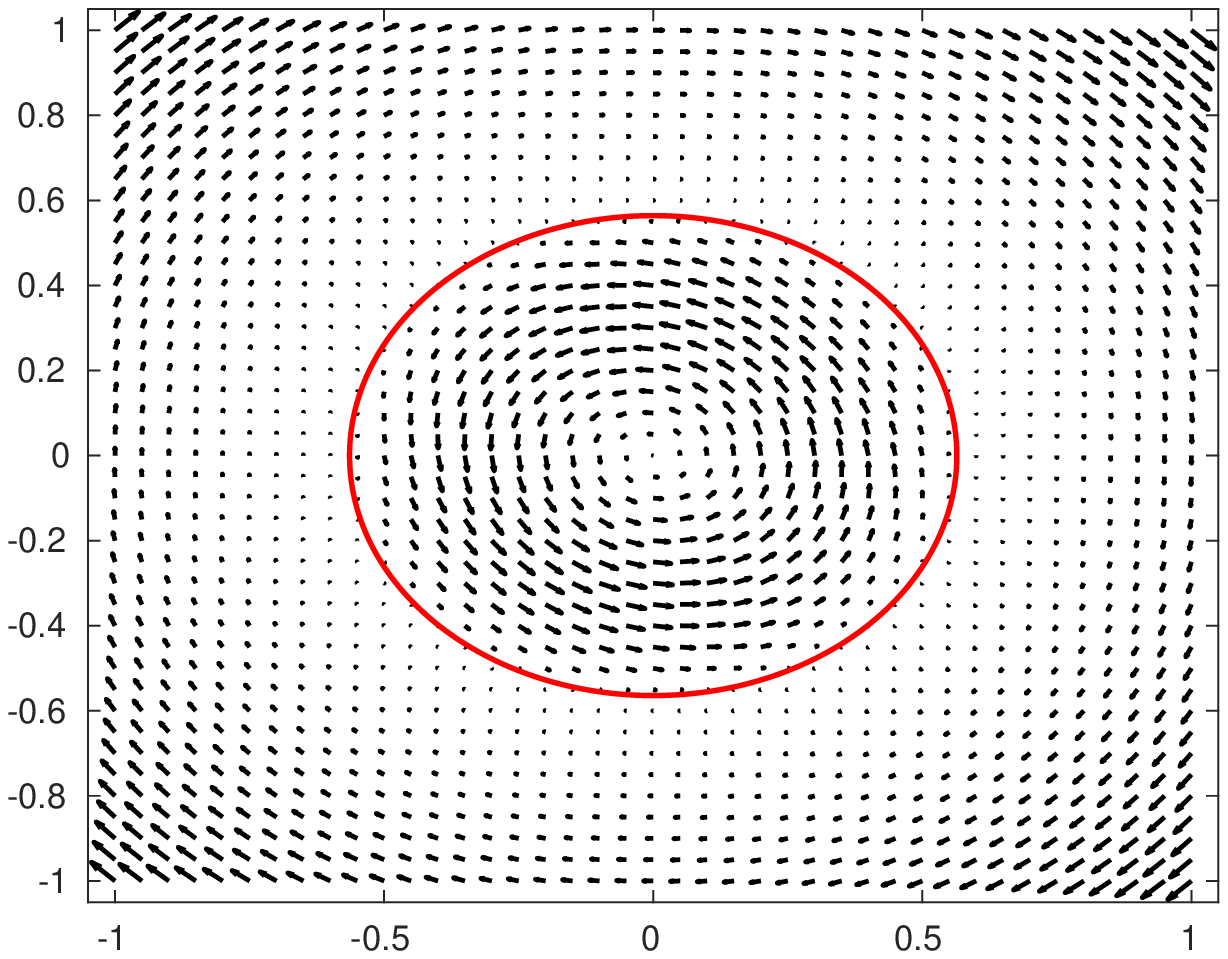}
		\includegraphics[scale=.53]{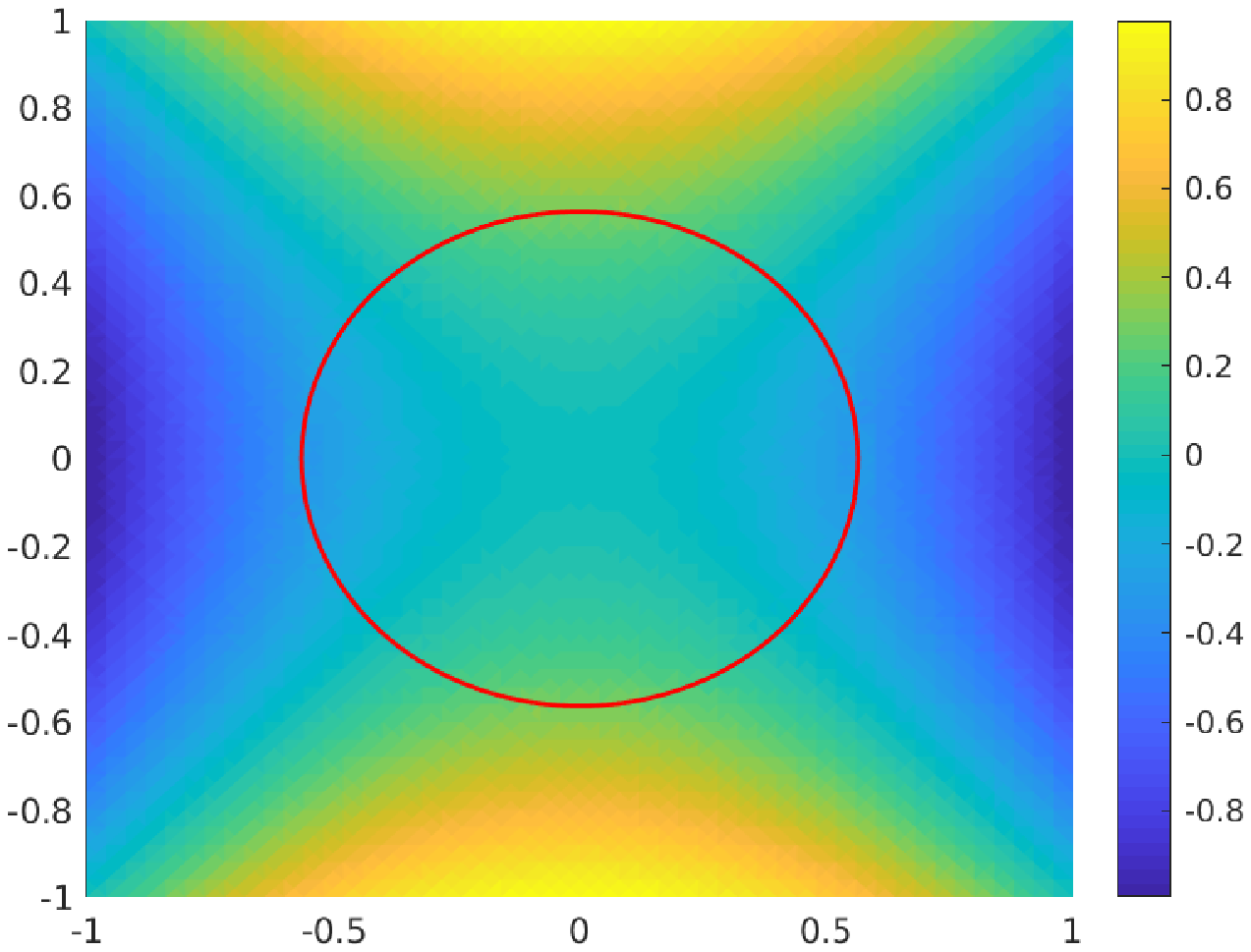}
		\caption{$\vv u_h$ (left) and $p_h$ (right) using the pair $P_2$ -- $P_0$. \label{fig:ex2}}
	\end{center}
\end{figure}

We repeat the experiment using the Mini-element and the same set of parameters and meshes. The computed error norms and experimental rates are shown in Table~\ref{t:ex2b}.

\begin{table}[!ht]
	\begin{center}
		\begin{tabular}{c|c|c|c|c|c|c}
			dofs & ${\tt e}(\vv u)$ & ${\tt r}(\vv u)$ & ${\tt e}(p)$ & ${\tt r}(p)$ & ${\tt e}(\vv u,p)$ & ${\tt r}(\vv u,p)$\\
			\hline
			$604$ & $1.8E+00$ & $-$ & $9.5E-01$ & $-$ & $2.0E+00$ & $-$ \\\hline
			$2072$ & $7.5E-01$ & $ 1.226$ & $2.0E-01$ & $ 2.269$ & $7.8E-01$ & $ 1.362$ \\\hline
			$7736$ & $3.3E-01$ & $ 1.189$ & $6.0E-02$ & $ 1.716$ & $3.4E-01$ & $ 1.213$ \\\hline
			$29796$ & $1.5E-01$ & $ 1.121$ & $2.0E-02$ & $ 1.587$ & $1.5E-01$ & $ 1.132$ \\\hline
			$116924$ & $7.4E-02$ & $ 1.043$ & $7.0E-03$ & $ 1.507$ & $7.4E-02$ & $ 1.049$ \\\hline
			$463192$ & $3.7E-02$ & $ 1.012$ & $2.5E-03$ & $ 1.470$ & $3.7E-02$ & $ 1.015$ \\\hline
			$1038836$ & $2.4E-02$ & $ 1.005$ & $1.4E-03$ & $ 1.460$ & $2.4E-02$ & $ 1.006$
		\end{tabular}
	\end{center}
	\caption{Example 2, errors for a sequence of uniform meshes, and fixed values of $\nu^\pm$. The solution $(\vv u_h,p_h)$ is approximated with the Mini-element. \label{t:ex2b}}	
\end{table}
\subsection*{Example 3} We consider the parameters $\nu^-=0.5$, $\nu^+=2$, and the exact solution $(\vv u,p)$ given by
\begin{equation}\label{example3}
\vv u(\vv x)=\frac{1}{\pi}\Bigg(\begin{array}{c}\sin\pi x_1\sin\pi x_2\\[1ex]\cos\pi x_1\cos\pi x_2\end{array}\Bigg),\quad\text{and}\quad p(\vv x)=\begin{cases}x_1^2+x_2^2,&|\vv x|<R,\\[2ex]\dfrac{-1}{6\pi},&|\vv x|> R\end{cases}.
\end{equation}
We observe that $\jump{\vv u}=\vv 0$, and the jump $\jump{\sigma(\vv u,p)\vv n}$ is non-zero and is given by
\[\jump{\sigma(\vv u,p)\vv n}=\frac{-3\cos\pi x_1\sin \pi x_2}{2R}\binom{x_1}{-x_2}-\dfrac{6\pi R^2+1}{6\pi R}\binom{x_1}{x_2}=:\bm{\lambda}(\vv x).\]Additionally, $\vv u$ is divergence-free and $(p,1)_\Omega=0$. We test the code with the stabilization parameters $\gamma=30$, $\gamma_{\vv u}^\pm=25$ and $\gamma_p^\pm=25$ for the Mini-element, $\gamma=30$, $\gamma_{\vv u}^\pm=25$ and $\gamma_{p}^\pm=20$ for the pair $P_2$ -- $P_0$, decompose the domain by a sequence of uniform meshes with decreasing size, and summarize the results in the Tables~\ref{T:ex3a} and~\ref{T:ex3b}.

\begin{table}[!ht]
	\begin{center}
		\begin{tabular}{c|c|c|c|c|c|c}
			dofs & ${\tt e}(\vv u)$ & ${\tt r}(\vv u)$ & ${\tt e}(p)$ & ${\tt r}(p)$ & ${\tt e}(\vv u,p)$ & ${\tt r}(\vv u,p)$\\
			\hline
			$815$ & $1.4E+00$ & $-$ & $5.8E+00$ & $-$ & $5.9E+00$ & $-$ \\\hline
			$2867$ & $4.6E-01$ & $ 1.568$ & $2.9E-01$ & $ 4.321$ & $5.4E-01$ & $ 3.445$ \\\hline
			$10867$ & $1.6E-01$ & $ 1.484$ & $8.1E-02$ & $ 1.824$ & $1.8E-01$ & $ 1.564$ \\\hline
			$42199$ & $7.2E-02$ & $ 1.192$ & $3.3E-02$ & $ 1.309$ & $7.9E-02$ & $ 1.213$ \\\hline
			$166303$ & $3.3E-02$ & $ 1.130$ & $1.3E-02$ & $ 1.359$ & $3.5E-02$ & $ 1.164$ \\\hline
			$660243$ & $1.6E-02$ & $ 1.033$ & $6.0E-03$ & $ 1.104$ & $1.7E-02$ & $ 1.042$ \\\hline
			$1481863$ & $1.1E-02$ & $ 0.999$ & $4.0E-03$ & $ 0.962$ & $1.1E-02$ & $ 0.995$ \\
		\end{tabular}
	\end{center}
	\caption{Example 3, errors for a sequence of uniform meshes, and fixed values of $\nu^\pm$. The solution $(\vv u_h,p_h)$ is approximated with $P_2$ -- $P_0$ elements. \label{T:ex3a}}	
\end{table}

\begin{table}[!ht]
	\begin{center}
		\begin{tabular}{c|c|c|c|c|c|c}
			dofs & ${\tt e}(\vv u)$ & ${\tt r}(\vv u)$ & ${\tt e}(p)$ & ${\tt r}(p)$ & ${\tt e}(\vv u,p)$ & ${\tt r}(\vv u,p)$\\
			\hline
			$604$ & $3.0E+00$ & $-$ & $1.1E+01$ & $-$ & $1.1E+01$ & $-$ \\\hline
			$2072$ & $1.5E+00$ & $ 1.003$ & $1.7E+00$ & $ 2.639$ & $2.3E+00$ & $ 2.288$ \\\hline
			$7736$ & $5.9E-01$ & $ 1.329$ & $4.5E-01$ & $ 1.927$ & $7.4E-01$ & $ 1.609$ \\\hline
			$29796$ & $2.4E-01$ & $ 1.285$ & $1.4E-01$ & $ 1.717$ & $2.8E-01$ & $ 1.415$ \\\hline
			$116924$ & $1.1E-01$ & $ 1.159$ & $4.1E-02$ & $ 1.732$ & $1.2E-01$ & $ 1.261$ \\\hline
			$463192$ & $5.2E-02$ & $ 1.064$ & $1.0E-02$ & $ 2.014$ & $5.3E-02$ & $ 1.134$ \\\hline
			$1038836$ & $3.4E-02$ & $ 1.021$ & $4.8E-03$ & $ 1.850$ & $3.4E-02$ & $ 1.043$ \\
		\end{tabular}
	\end{center}
	\caption{Example 3, errors for a sequence of uniform meshes, and fixed values of $\nu^\pm$. The solution $(\vv u_h,p_h)$ is approximated with the Mini-element.\label{T:ex3b} }	
\end{table}

We finish by showing plots of the computed finite element solution $(\vv u_h,p_h)$ in Figure~\ref{fig:ex3}.
\begin{figure}[!ht]
	\begin{center}
		\includegraphics[scale=.53]{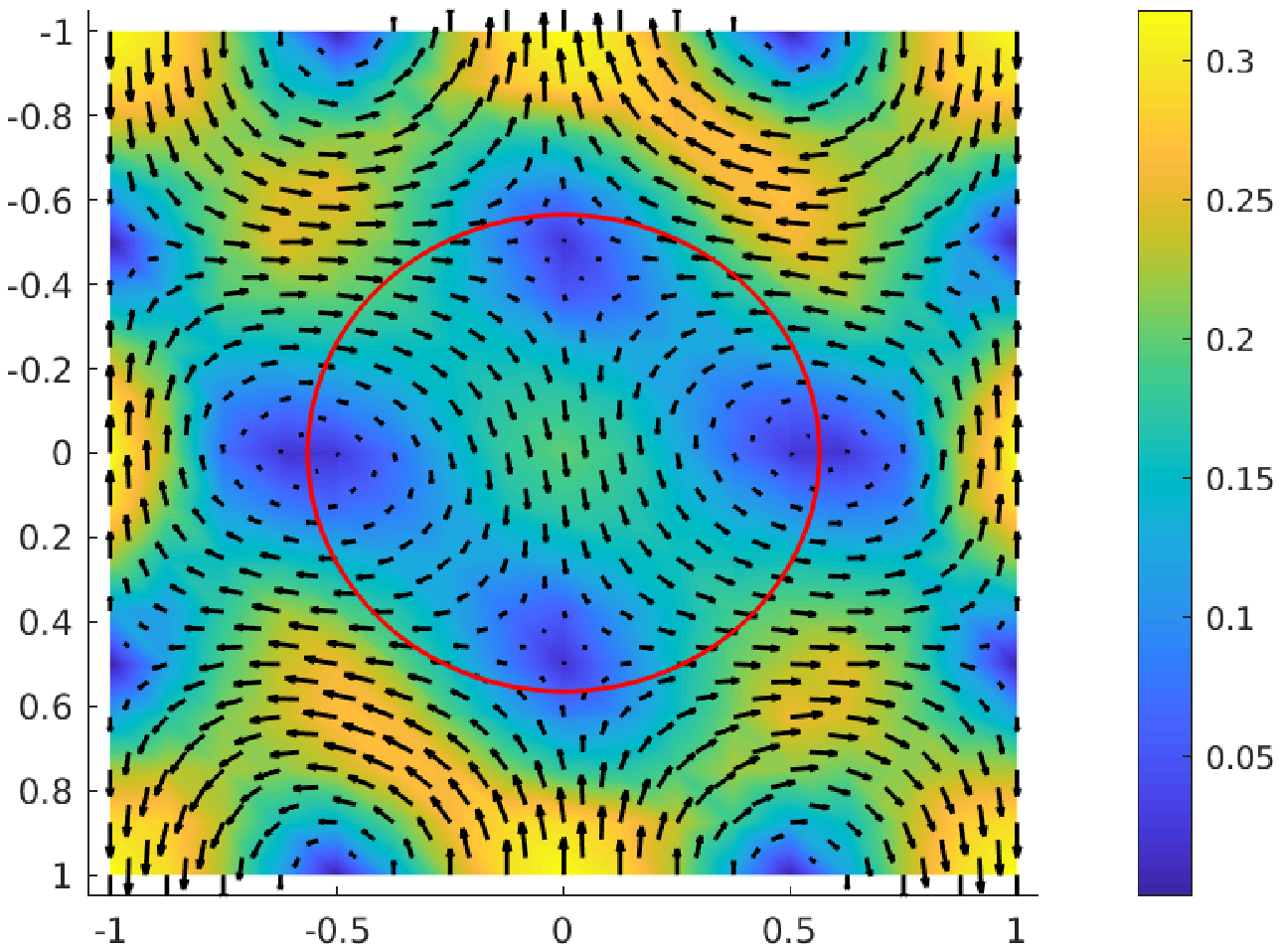}
		\includegraphics[scale=.53]{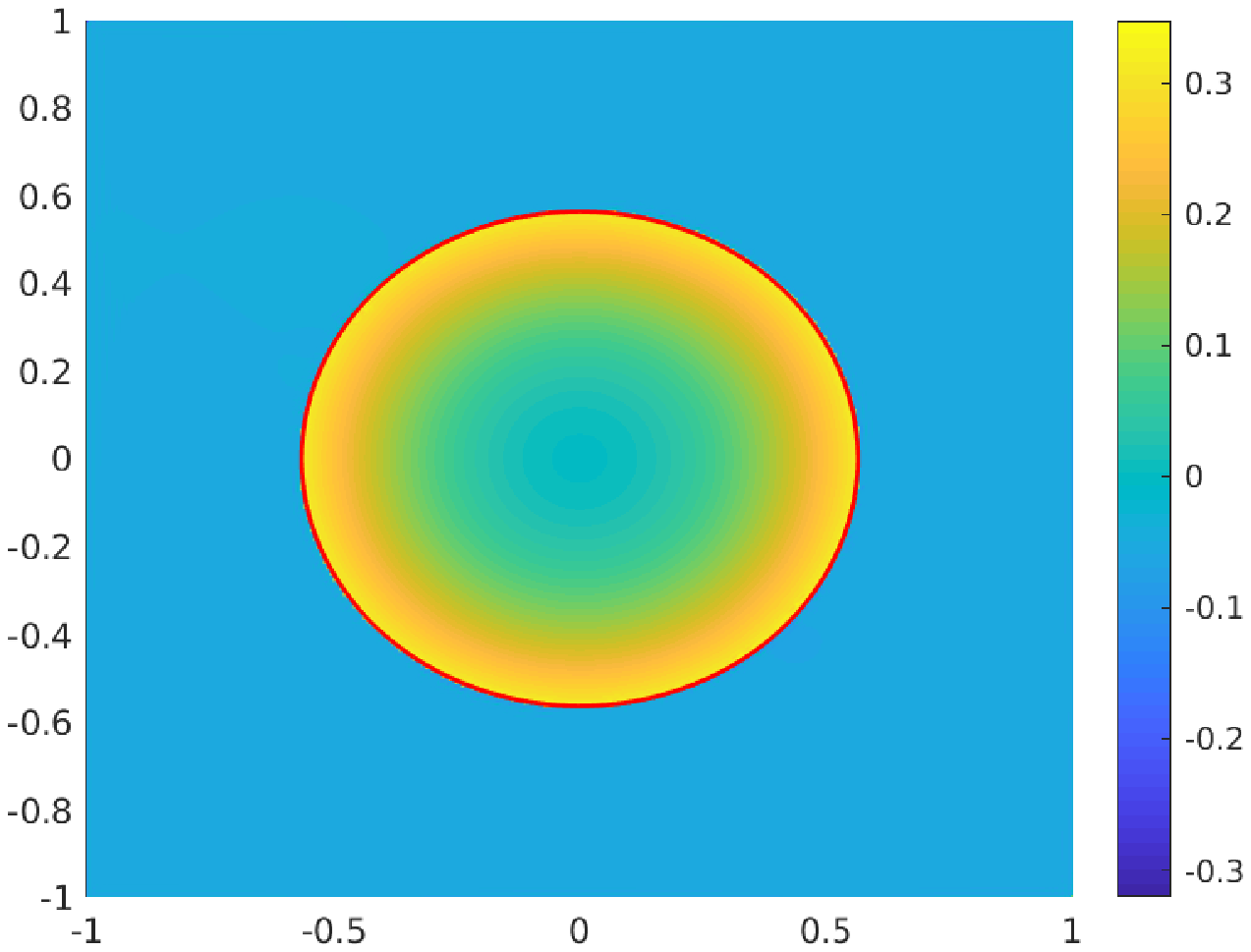}
		\caption{$\vv u_h$ (left) and $p_h$ (right) using the Mini-element. The colormap for the velocity vector field $\vv u_h$ is $\|\vv u_h\|_2$ and for the pressure field $p_h$ is the height $p_h(\vv x)$. \label{fig:ex3}}
	\end{center}
\end{figure}

\subsection*{Example 4} We consider the same exact solution $(\vv u,p)$ given by \eqref{example3} and repeat experiments from Example 3, this time with experimental errors for $\|\div(\vv u-\vv u_h)\|_{\Omega}$, and also with respect to the $L^{\infty}$  norm for the (viscous part of the) stress tensor and pressure
\[
	{\tt e}_d(\vv u):=\|\div(\vv u-\vv u_h)\|_{\Omega},\quad\quad{\tt e}_\infty(\vv u):=\|\nu D(\vv u-\vv u_h)\|_{L^\infty(\Omega)}\quad\text{and} \quad{\tt e}_\infty( p):=\|p-p_h\|_{L^\infty(\Omega)}
\]
respectively. We test the method for $P_2$ -- $P_0$ bulk spaces and fixed viscosity and stabilization parameters $\nu^-=0.5$, $\nu^+=20$, $\gamma=20$, $\gamma_{\vv u}^\pm=10$ and $\gamma_p^\pm=15$. We build a sequence of uniform triangular meshes with decreasing mesh size.  The error norms and experimental rates are shown in Table~\ref{T:ex4a}. 
\begin{table}[!ht]
	\begin{center}
		\begin{tabular}{c|c|c|c|c|c|c}
			dofs & ${\tt e}_{d}(\vv u)$ & ${\tt r}_d(\vv u)$ & ${\tt e}_\infty(\vv u)$ & ${\tt r}_\infty(\vv u)$ & ${\tt e}_\infty(p)$ & ${\tt r}_\infty(p)$\\
			\hline
			$1553$ & $6.8E-02$ & $-$ & $3.7E-01$ & $-$ & $2.3E-01$ & $-$ \\\hline
			$5609$ & $3.0E-02$ & $ 1.183$ & $1.7E-01$ & $ 1.130$ & $1.1E-01$ & $ 1.045$ \\\hline
			$21457$ & $1.7E-02$ & $ 0.847$ & $1.1E-01$ & $ 0.555$ & $6.4E-02$ & $ 0.828$ \\\hline
			$83873$ & $7.3E-03$ & $ 1.184$ & $4.7E-02$ & $ 1.285$ & $3.0E-02$ & $ 1.079$ \\\hline
			$331585$ & $3.6E-03$ & $ 1.024$ & $2.1E-02$ & $ 1.191$ & $1.5E-02$ & $ 1.026$ \\\hline
			$1318473$ & $1.7E-03$ & $ 1.053$ & $9.8E-03$ & $ 1.066$ & $7.4E-03$ & $ 0.991$ \\
		\end{tabular}
	\end{center}
\caption{Example 4, errors for a sequence of uniform meshes, and fixed values of $\nu^\pm$. The solution $(\vv u_h,p_h)$ is approximated with $P_2$ -- $P_0$ elements.\label{T:ex4a}}
\end{table}\\
We repeat the experiment using the Mini-element, the same viscosities $\nu^\pm$ and stabilization parameters $\gamma=30$, $\gamma_{\vv u}^\pm=15$ and $\gamma_p^\pm=5$. The computed error norms and experimental rates are shown in Table~\ref{T:ex4b}. Although, the error estimates in $L^{\infty}$ norm are not covered by the analysis of the paper, the numerical experiments demonstrate the first order of convergence.
\begin{table}[!ht]
	\begin{center}
		\begin{tabular}{c|c|c|c|c|c|c}
			dofs & ${\tt e}_{d}(\vv u)$ & ${\tt r}_d(\vv u)$ & ${\tt e}_\infty(\vv u)$ & ${\tt r}_\infty(\vv u)$ & ${\tt e}_\infty(p)$ & ${\tt r}_\infty(p)$\\
			\hline
			$1122$ & $5.0E-01$ & $-$ & $1.7E+00$ & $-$ & $8.9E-01$ & $-$ \\\hline
			$3994$ & $2.3E-01$ & $ 1.124$ & $9.4E-01$ & $ 0.818$ & $3.7E-01$ & $ 1.277$ \\\hline
			$15154$ & $1.1E-01$ & $ 1.055$ & $4.8E-01$ & $ 0.984$ & $1.4E-01$ & $ 1.432$ \\\hline
			$58978$ & $5.4E-02$ & $ 1.025$ & $2.4E-01$ & $ 0.995$ & $7.2E-02$ & $ 0.916$ \\\hline
			$232642$ & $2.7E-02$ & $ 1.013$ & $1.2E-01$ & $ 0.999$ & $3.8E-02$ & $ 0.905$ \\\hline
			$923994$ & $1.3E-02$ & $ 1.006$ & $6.0E-02$ & $ 1.000$ & $2.0E-02$ & $ 0.964$ \\\hline
			$2074098$ & $8.9E-03$ & $ 1.003$ & $4.0E-02$ & $ 1.000$ & $1.3E-02$ & $ 0.975$ \\
		\end{tabular}
	\end{center}
	\caption{Example 4, errors for a sequence of uniform meshes, and fixed values of $\nu^\pm$. The solution $(\vv u_h,p_h)$ is approximated with the Mini-element.\label{T:ex4b}}
\end{table}

\bibliographystyle{siam} %acm %siam %plain%
\bibliography{references}

\appendix

\section{Finite element extension.}\label{Apdx}
The goal of this section is to provide the $H^1$-bounded extension operator for finite elements of arbitrary degree, i.e. to prove Lemma~\ref{LemmaExt}.  We build the desired extension for each velocity component independently.  Let $V_h^k$ be the $H^1_0(\Omega)$-conformal FE space of degree $k$.  Then $\mathbf{V}_h^{\rm bulk}=\otimes_{j=1}^d V_{j,h}$, and each $V_{j,h}$  satisfies  $V_h^1 \subset V_{j,h} \subset V_h^r$ for some integer $r>0$.   Fix arbitrary $j\in\{1,\dots,d\}$. Using the definition of the local nodal basis, mapping $T\in \mathcal{T}_h$ to the reference simplex and the equivalence of norms in a space of finite dimension, one shows
\begin{equation}\label{aux1230}
\|v\|_{L^\infty(T)} \le C \max_{ \vv y \in \mathcal{N}(T)} |v(\vv y)| \quad \forall~  v \in V_{j,h},\, T \in \mathcal{T}_h,
\end{equation}
with some $C$ independent of $v$ and $T$.

The strategy will be to build an extension operator for piecewise linears first. Then to use that extension operator to build a general extension operator.

\subsection{Extension operator for piecewise linears} \label{A1}
Let $V_h^{1,+}=\{ v|_{\Omega_h^+}: v \in V_{h}^1\}$.  Consider $v \in V_h^{1,+}$ and let $Ev \in H_0^1(\Omega)$ be the Stein extension \cite{SteinBook} such that
\begin{equation}\label{stein}
Ev=v \quad \text{on }~ \Omega^+ \quad  \text{ and }\quad \|Ev\|_{1,\Omega} \le C \|v\|_{1,\Omega^+}.
\end{equation}
We need $I^{\rm SZ}(Ev)$, the Scott-Zhang interpolant  of $Ev$ onto $V_h^1$. The construction of $I^{\rm SZ}(Ev)\in V_h^1$ follows the standard procedure from \cite{ScottZhang}. However, some care is required to ensure that we are recovering the same $P^1$ function in all interior tetrahedra of $\Omega^+$,
\begin{equation}\label{349}
I^{\text{SZ}} Ev|_T=Ev|_T=v|_T  \quad \text{ for all } T \in \mathcal{T}_{h,i}^+.
\end{equation}
To provide  \eqref{349}, we exploit a freedom in choosing the Scott-Zhang interpolant pointed out in \cite{ScottZhang}:
For every vertex $\vv y$ of $\mathcal{T}_h$ we need to associate either a $d$-dimensional simplex to $\vv y$ or a $d-1$-dimensional simplex. If $\vv y$ is a vertex for some $T \in \mathcal{T}_{h,i}^+$ and $\vv y \notin \partial \Omega$, then we associate one of these simplices from $\mathcal{T}_{h,i}^+$ with $\vv y$. By the stability property of the Scott-Zhang interpolant we have
\begin{equation}\label{scottzhang}
\| I^{\text{SZ}} Ev\|_{1,\Omega} \le C \| Ev\|_{1,\Omega}.
\end{equation}

Now we define a discrete extension operator for piecewise linear $v \in V_h^{1,+}$:
\[
E_h^1 v (\vv y)=
\begin{cases}
v(\vv y) &  \text{ if  } \vv y  \text{ is a vertex} ~\text{and}~ \vv y  \in \overline{\Omega_h^+}, \\
 I^{\text{SZ}} Ev(\vv y) & \text{ if }  \vv y   \text{ is a vertex}  ~\text{and}~ \vv y \notin\overline{\Omega_h^+}.
\end{cases}
\]
Note that
\begin{equation*}\label{P1ext}
E_h^1 v=v \qquad  \text{ on } \Omega_h^+.
\end{equation*}
We decompose $\mathcal{T}_{h,i}^-= \mathcal{T}_{h,i}^{-,1} \cup \mathcal{T}_{h,i}^{-, \text{int}}$ where
$\mathcal{T}_{h,i}^{-,1} =\{ T \in \mathcal{T}_{h,i}^-: T \text{ has a vertex } \vv y \text{ such that } \vv y \in K, \text{ for some } K \in \mathcal{T}_h^\Gamma\}$ and  $\mathcal{T}_{h,i}^{-, \text{int}}= \mathcal{T}_{h,i}^{-}\backslash \mathcal{T}_{h,i}^{-, 1}$.

We then see that
\begin{alignat*}{1}
\|\nabla E_h^1 v\|_{\Omega_{h,i}^-}^2 &= \sum_{T  \in \mathcal{T}_{h,i}^-} \|\nabla E_h^1 v\|_{T}^2 \\
&= \sum_{T  \in \mathcal{T}_{h,i}^{-,1}} \|\nabla E_h^1 v\|_{T}^2+  \sum_{T  \in \mathcal{T}_{h,i}^{-,\text{int}}} \|\nabla E_h^1 v\|_{T}^2 \\
&= \sum_{T  \in \mathcal{T}_{h,i}^{-,1}} \|\nabla E_h^1 v\|_{T}^2+  \sum_{T  \in \mathcal{T}_{h,i}^{-,\text{int}}} \|\nabla I^{\text{SZ}} E v\|_{T}^2 \\
&\le \sum_{T  \in \mathcal{T}_{h,i}^{-,1}} \|\nabla E_h^1 v\|_{T}^2 + \| I^{\text{SZ}} Ev\|_{1,\Omega^-}^2.
\end{alignat*}
Therefore, we are left to bound   $\sum_{T  \in \mathcal{T}_{h,i}^{-,1}} \|\nabla E_h^1 v\|_{T}^2$. We use the triangle inequality to get
\begin{alignat*}{1}
\sum_{T  \in \mathcal{T}_{h,i}^{-,1}} \|\nabla E_h^1 v\|_{T}^2 & \le \sum_{T  \in \mathcal{T}_{h,i}^{-,1}} 2( \|\nabla I^{\text{SZ}} Ev\|_{T}^2+\|\nabla (E_h^1 v-I^{\text{SZ}} Ev)\|_{T}^2) \\
& \le 2 \| I^{\text{SZ}} Ev\|_{1,\Omega^-}^2+ 2 \sum_{T  \in \mathcal{T}_{h,i}^{-,1}} \|\nabla (E_h^1 v-I^{\text{SZ}} Ev)\|_{T}^2.
\end{alignat*}
For ease of notation we set $w= E_h^1 v-I^{\text{SZ}} Ev$. Now note that  if $T \in T  \in \mathcal{T}_{h,i}^{-,1}$ then $w$ vanishes on all vertices that do not belong to $\overline{\Omega_h^+}$. Hence, using inverse estimates we get
\begin{alignat*}{1}
 \|\nabla w \|_{T}^2 \le C \, h_T^{-2} \| w\|_{T}^2 & \le  C h_T^{d-2} \| w\|_{L^\infty(T)}^2 \le C h_T^{d-2} \sum_{K \in \mathcal{T}_h^\Gamma, \overline{K} \cap \overline{T} \neq \emptyset }  \| w\|_{L^\infty(K)}^2 \\
 & \le C h_T^{-2} \sum_{K \in \mathcal{T}_h^\Gamma, \overline{K} \cap \overline{T} \neq \emptyset }  \|w\|_{K}^2.
\end{alignat*}
Hence, we will have
\begin{equation*}
\sum_{T  \in \mathcal{T}_{h,i}^{-,1}} \|\nabla w\|_{T}^2 \le C \sum_{T \in \mathcal{T}_h^\Gamma} h_T^{-2} \| w\|_{T}^2.
\end{equation*}

Recalling Assumption~\ref{Ass1}, if $T \in \mathcal{T}_h^{\Gamma}$ there exists  $K_T \in \mathcal{T}_{h,i}^+$ such that $T=K_1,K_2,\ldots,K_{\ell}=K_T$ and  $K_j,  K_{j+1}$ have a common $d-1$ face for $j <\ell$ and $K_j \subset \Omega_h^+$. The number $\ell \le M$ where $M$ is uniformly bounded  and only depends on the shape regularity of the mesh. Then we see from a simple scaling argument that
\begin{equation*}
\| w\|_{T}^2\le C (\|w\|_{K_T}^2 + \sum_{i=1}^\ell h_{K_i}^2 \|\nabla w\|_{K_i}^2).
\end{equation*}
Using \eqref{349} we have $w|_{K_T} \equiv 0$ since $K_T \in \mathcal{T}_{h,i}^+$ and so
\begin{equation*}
\| w\|_{T}^2\le C \sum_{i=1}^\ell h_{K_i}^2 \|\nabla w\|_{K_i}^2 \le C \,h_T^2 \sum_{i=1}^\ell \|\nabla w\|_{K_i}^2.
\end{equation*}
In the last inequality we used that by shape regularity $h_{K_i} \le C h_T$ where $C$ depends on $M$ and shape regularity constant.
We then get
\begin{equation*}
\sum_{T \in \mathcal{T}_h^\Gamma} h_T^{-2} \| w\|_{T}^2 \le C \sum_{T \in \mathcal{T}_h^+}\| \nabla w\|_{T}^2 \le C (\| \nabla  v\|_{\Omega_h^+}^2+ \| \nabla (I^{\text{SZ}} Ev)\|_{\Omega_h^+}^2).
\end{equation*}
In the last step we used \eqref{P1ext}.  Therefore, combining all the inequalities above we obtain
\begin{alignat*}{1}
\|\nabla E_h^1 v\|_{\Omega_{h,i}^-}^2 \le C (\| I^{\text{SZ}} Ev\|_{1,\Omega}^2+ \| \nabla  v\|_{\Omega_h^+}^2).
\end{alignat*}
We hence, get that after using \eqref{stein} and \eqref{scottzhang} that
\begin{equation*}
\|\nabla E_h^1 v\|_{\Omega_{h,i}^-} \le  C \| v\|_{1,\Omega_h^+}.
\end{equation*}
Finally, since $E_h^1v|_{\Omega_h^+}=v$ and $\overline{\Omega}=\overline{\Omega_{h,i}^-}\cup\overline{\Omega_h^+}$ as indicated in section~\ref{sec:prelimdisc}, we get
\begin{equation}\label{E1estimate}
\|\nabla E_h^1 v\|_{\Omega} \le  C \| v\|_{1,\Omega_h^+}.
\end{equation}

\subsection{General Discrete Extension Operator}
Building on the availability of $E_h^1$ we  define the general extension operator. Let $V_h^+= \{ v|_{\Omega_h^+}: v \in V_{j,h}\}$ and consider the subspace $W_h^+=\{v \in V_h^+: v(\vv y)=0 \text{ for all vertices } \vv y \text{ of } \mathcal{T}_h^+ \}$.  For $v \in W_h^+$ we consider the extension $Q_h v  \in V_{j,h}$  by defining its nodal values as follows
\[
Q_hv (\vv y)=
\begin{cases}
v(\vv y) &  \text{ if  } \vv y \in \mathcal{N}(\mathcal{T}_h)~\text{and}~ \vv y  \in \overline{\Omega_h^+}\ \\
0 & \text{ if } \vv y \in  \mathcal{N}(\mathcal{T}_h)~\text{and}~ \vv y \notin\overline{\Omega_h^+}
\end{cases}
\]
Then we can easily prove the following lemma.
\begin{lemma} For $v \in W_h^+$ it holds
\begin{equation}\label{J}
\|\nabla(Q_h v)\|_{\Omega_{h,i}^-} \le C_1 \| \nabla v\|_{\Omega_h^+},
\end{equation}
where the constant $C_1$ only depends on the shape regularity of the mesh.
\end{lemma}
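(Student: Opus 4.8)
The plan is to estimate $\|\nabla(Q_h v)\|_T$ on each $T\in\mathcal T_{h,i}^-$ separately and then sum over the interior layer. The point is that $v\in W_h^+$ vanishes at every vertex of $\mathcal T_h^+$, so $Q_hv$ vanishes at every vertex of $\mathcal T_h$: vertices inside $\overline{\Omega_h^+}$ are vertices of $\mathcal T_h^+$, where $v=0$, and vertices outside $\overline{\Omega_h^+}$ are zeroed out by the definition of $Q_h$. Consequently, on a simplex $T\in\mathcal T_{h,i}^-$ the only possibly nonzero nodal values of $Q_hv|_T$ are the values $v(\vv y)$ at nodes $\vv y\in\mathcal N(T)\cap\overline{\Omega_h^+}$, and each such node lies in $\overline K$ for some $K\in\mathcal T_h^+$ with $\overline K\cap\overline T\neq\emptyset$.

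With this in hand, first I would combine the finite-element inverse inequality \eqref{inverse1} with the local nodal bound \eqref{aux1230}: for $T\in\mathcal T_{h,i}^-$,
\[
\|\nabla Q_hv\|_T\le C\,h_T^{-1}\|Q_hv\|_T\le C\,h_T^{\frac d2-1}\|Q_hv\|_{L^\infty(T)}\le C\,h_T^{\frac d2-1}\max_{\vv y\in\mathcal N(T)\cap\overline{\Omega_h^+}}|v(\vv y)| .
\]
For each such $\vv y\in\overline K$, $K\in\mathcal T_h^+$, I would use \eqref{aux1230} and an inverse estimate again to get $|v(\vv y)|\le\|v\|_{L^\infty(K)}\le C\,h_K^{-\frac d2}\|v\|_K$, and then invoke the key local Poincar\'e-type bound
\[
\|v\|_K\le C\,h_K\|\nabla v\|_K\qquad\text{for }v\in W_h^+ ,
\]
which holds because $v|_K\in P_r(K)$ vanishes at all $d+1$ vertices of $K$: after pulling back to the reference simplex, $\|\nabla\cdot\|$ is a norm on the finite-dimensional subspace of $P_r$ polynomials vanishing at the vertices (a gradient-free such polynomial is a constant vanishing at a vertex, hence zero), equivalent there to the $L^2$ norm, and a scaling argument then gives the stated inequality. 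Using $h_K\simeq h_T$ for neighbours (shape regularity \eqref{eq:minang}), I arrive at
\[
\|\nabla Q_hv\|_T^2\le C\sum_{\substack{K\in\mathcal T_h^+\\ \overline K\cap\overline T\neq\emptyset}}\|\nabla v\|_K^2 .
\]

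Finally I would sum over $T\in\mathcal T_{h,i}^-$; by shape regularity each $K\in\mathcal T_h^+$ is a neighbour of only a bounded number of simplices, so
\[
\|\nabla Q_hv\|_{\Omega_{h,i}^-}^2=\sum_{T\in\mathcal T_{h,i}^-}\|\nabla Q_hv\|_T^2\le C\sum_{K\in\mathcal T_h^+}\|\nabla v\|_K^2=C\|\nabla v\|_{\Omega_h^+}^2 ,
\]
which is \eqref{J}, with $C_1$ depending only on the shape-regularity constant $\kappa$ (and the fixed $d$ and maximal polynomial degree $r$). Note that, unlike the piecewise-linear operator $E_h^1$, this estimate does not require Assumption~\ref{Ass1}: because $v$ already vanishes at the vertices, no walk-back through the cut layer is needed. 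The only genuinely non-routine ingredient is the local Poincar\'e inequality for polynomials vanishing at the vertices of a simplex; the remainder is the standard inverse-estimate / nodal-norm-equivalence toolkit together with the bookkeeping of charging each surviving node of $Q_hv$ on the $\Omega^-$ side to an adjacent element of $\mathcal T_h^+$ and controlling the finite overlap, both immediate from shape regularity.
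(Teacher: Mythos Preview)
Your proof is correct and follows essentially the same route as the paper: inverse estimates to pass to $L^\infty$ and nodal values, the key local Poincar\'e-type inequality for polynomials vanishing at the vertices of a simplex, and a finite-overlap summation. The only cosmetic differences are that the paper first explicitly identifies the support of $Q_hv|_{\Omega_{h,i}^-}$ as the layer $\omega_h^-$ of simplices sharing an edge with $\partial\Omega_h^+$, and applies the Poincar\'e bound in $L^\infty$ before the inverse estimate rather than in $L^2$ after it; the substance is the same.
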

\begin{proof}
Note that $Q_h v|_{\Omega_{h,i}^-}$ is supported on all simplices $T \in \mathcal{T}_h^-$ that have at least one edge belonging to $\partial \Omega_h^+$. Lets call this set of  simplices $\omega_h^-$:
\begin{equation*}
\omega_h^-=\{ T \in \mathcal{T}_h^-: \text{ an edge of } T \text{ belongs to } \partial \Omega_h^+ \},
\end{equation*}
and we also consider the set
\begin{equation*}
\omega_h^+=\{ T \in \mathcal{T}_h^+: \text{ an edge of } T \text{ belongs to } \partial \Omega_h^+ \}.
\end{equation*}
 For each $T \in \omega_h^-$, let $\Xi(T):=\{ \tau \in \mathcal{T}_h^+: \tau \text{ and } T \text{ share a common edge}\}$.  Then we have due to the finite element inverse estimates:
 \begin{equation*}
 \|\nabla Q_h v\|_{\Omega_{h,i}^-}^2 =\sum_{T \in \omega_h^-} \|\nabla Q_h v\|_{T}^2 \le C \sum_{T \in \omega_h^-} h_T^{d-2} \|Q_h v\|_{L^\infty(T)}^2.
 \end{equation*}
At the same time, with the help of \eqref{aux1230} we have for each $T \in \omega_h^+$,
\begin{equation*}
  \|Q_h v\|_{L^\infty(T)} \le C \max_{\vv y \in \mathcal{N}(T)} \|Q_h v(\vv y)\| \le  C\max_{\tau \in \Xi(T)} \|v\|_{L^\infty(\tau)}.
 \end{equation*}
  For the second inequality we used that $Q_h v$ vanishes on all nodes except the nodal points of $T$ that belong  $\partial T$ and belong to the boundary of  $\partial \Omega_h^+$.  Since $v \in W_h^+$ and so $v$ vanishes  on all the vertices of such $\tau's$, we obtain
 \begin{equation*}
  \|Q_h v\|_{L^\infty(T)} \le C h_T  \max_{\tau \in \Xi(T)} \|\nabla v\|_{L^\infty(\tau)}.
 \end{equation*}
 Finally, applying inverse estimates gives
 \begin{equation*}
  \|Q_h v\|_{L^\infty(T)} \le C h_T h_T^{-d/2} \|\nabla v\|_{\tau}  \quad \text{ for some } \tau \in \Xi(T).
 \end{equation*}
  Hence,
  \begin{equation*}
 \|\nabla Q_h v\|_{\Omega_{h,i}^-}^2 \le C \sum_{\tau \in \omega_h^+} \|\nabla v\|_{\tau}^2 \le C \|\nabla v\|_{\Omega_h^+}^2.
 \end{equation*}
 \end{proof}

 Let $I_h: C(\Omega_h^+) \rightarrow V_h^{1,+}$ be the Lagrange interpolant. In section~\ref{A1} we defined a stable discrete extension operator $ E_h^1:V_h^{1,+}(\Omega_h^+)\to V_h^1\subset V_{j,h}$.

We finally can define our general \emph{discrete extension operator}. For any $v \in V_{j,h}^{+}$, we define $E_h v \in V_{j,h}$ as follows
 \begin{equation}\label{Eh}
 E_h v:= E_h^1(I_h v) + Q_h(v-I_h v).
 \end{equation}
 Note that $v-I_h v \in W_h^+$ so indeed this definition makes sense.

We use the boundedness of $E_h^1$, \eqref{E1estimate}, \eqref{J} and the stability of $I_h$ to obtain:
\begin{alignat*}{1}
\| \nabla E_h v\|_{\Omega_{h,i}^-} \le  & \| \nabla (E_h^1(I_h v)) \|_{\Omega_{h,i}^-}+ \| \nabla( Q_h(v-I_h v))\|_{\Omega_{h,i}^-} \\
\le & C \| I_h v_h\|_{1,\Omega_h^+}+ C \| \nabla(v-I_h v)\|_{\Omega_h^+}
\le   C \|v\|_{1,\Omega_h^+},
\end{alignat*}
with some $C$ independent of $h$ and the position of $\Gamma$ in the background mesh. Hence, we we have proven Lemma \ref{LemmaExt}.
\end{document}